\title{Strong solutions to McKean--Vlasov SDEs associated to a class of degenerate Fokker--Planck equations with coefficients of Nemytskii-type}
\author{Sebastian Grube\thanks{
                  Faculty of Mathematics, Bielefeld University, 33615 Bielefeld, Germany. E-Mail: sgrube@math.uni-bielefeld.de}
        }
\newcommand{\law}[1]{{\mathcal L}_{#1}}
\newcommand{\LN}{\mathbb N}
\newcommand{\RR}{\mathbb R}
\newcommand{\BBBB}{\mathcal B}
\newcommand{\EE}{\mathbb E}
\newcommand{\FF}{\mathcal F}
\newcommand{\PP}{\mathbb P}
\newcommand{\PPPP}{\mathcal P}
\newcommand{\SSS}{\mathscr S}
\newcommand{\dx}{\mathrm dx}
\newcommand{\dy}{\mathrm dy}
\newcommand{\dr}{\mathrm dr}
\newcommand{\ds}{\mathrm ds}
\newcommand{\inv}{^{-1}}
\newcommand{\rd}{{\RR^d}}
\DeclareMathOperator{\divv}{\mathrm{div}}
\DeclareMathOperator{\M}{\mathrm{M}}
\DeclareMathOperator{\Lip}{\mathrm{Lip}}
\newcommand{\norm}[2][]{\ifthenelse{\isempty{#1}}
	{\left\lVert#2\right\rVert}
	{\left\lVert#2\right\rVert_{{#1}}}
}
\newtheorem{theorem}{Theorem}[section]
\newtheorem{lemma}[theorem]{Lemma}
\newtheorem{corollary}[theorem]{Corollary}
\newtheorem{definition}[theorem]{Definition}
\newtheorem{remark}[theorem]{Remark}
\newtheorem{proposition}[theorem]{Proposition}
\begin{document}
\newpage
\maketitle
\begin{abstract}
While the nondegenerate case is well-known, there are only few results on the existence of strong solutions to McKean--Vlasov SDEs with coefficients of Nemytskii-type in the degenerate case.
We consider a broad class of degenerate nonlinear Fokker--Planck(--Kolmogorov) equations with coefficients of Nemytskii-type. This includes, in particular, the classical porous medium equation perturbed by a first-order term with initial datum in a subset of probability densities, which is dense with respect to the topology inherited from $L^1$, and, in the one-dimensional setting, the classical porous medium equation with initial datum in an arbitrary point $x_0\in\RR$.
For these kind of equations the existence of a Schwartz-distributional solution $u$ is well-known. We show that there exists a unique strong solution to the associated degenerate McKean--Vlasov SDE with time marginal law densities $u$. In particular, every weak solution to this equation with time marginal law densities $u$ can be written as a functional of the driving Brownian motion.
 Moreover, plugging any Brownian motion into this very functional yields a weak solution with time marginal law densities $u$.
\end{abstract}
\vspace{0.5em}
\noindent\textbf{Mathematics Subject Classification (2020):} 60H10,
60G17,
35C99.
\\
\sloppy \textbf{Keywords:} McKean--Vlasov stochastic differential equation, degenerate, pathwise uniqueness, Yamada--Watanabe theorem, nonlinear Fokker--Planck--Kolmogorov equation, porous medium equation.
\section{Introduction}\label{section.introduction}
We consider the following McKean--Vlasov stochastic differential equation (abbreviated by McKean--Vlasov SDE or MVSDE) in $\rd$, $d\in \LN$,  with coefficients of Nemytskii-type, which in our case is of the form
\begin{align}\label{MVSDE}
	\mathrm  dX(t) \notag
	=&\ E(X(t))b\left(\frac{\mathrm d\law{X(t)}}{\dx}(X(t))\right)\mathrm  dt  + \sqrt{2\frac{\beta\left(\frac{\mathrm d\law{X(t)}}{\dx}(X(t))\right)}{\frac{\mathrm d\law{X(t)}}{\dx}(X(t))}}\mathbbm{1}_{d\times d}\ \mathrm  d W(t), \notag \\
	X(0)=&\ X_0,\tag{MVSDE}
\end{align}
where $t\in [0,T]$, $T\in (0,\infty)$, $\mathbbm{1}_{d\times d}$ is the $d$-dimensional unit matrix, $(W(t))_{t\in[0,T]}$ is a standard $d$-dimensional  $(\FF_t)$-Brownian motion and $X_0$ an $\FF_0$-measurable function on some stochastic basis  $(\Omega,\FF,\PP;(\FF_t)_{t\in[0,T]})$, i.e.  a complete, filtered probability space, where $(\FF_t)_{t\in[0,T]}$ is a normal filtration, and $\law{X(t)}:= \PP\circ (X(t))\inv, t\in [0,T],$ denote the one-dimensional time marginal laws of the solution process $X$.
Here, we assume that the drift and diffusion coefficient are given through locally bounded Borel measurable functions
\begin{align*}
	E : \rd  \to \rd,\ b:\RR\to\RR,\ 
	\beta: \RR \to \RR,
\end{align*}
which satisfy conditions \ref{condition.a.general}, \ref{condition.b.general} below with the convention $\beta(0)\slash 0 := \beta'(0)$.
The McKean--Vlasov SDE \eqref{MVSDE} arises from the study of the following type of nonlinear Fokker--Planck(--Kolmogorov) equation (FPE)
\begin{align}
	\partial_t u(t,x) + \divv(E(x)b(u(t,x))u(t,x))-\Delta(\beta(u(t,x)))&=0,\ \ (t,x)\in (0,T)\times\rd, \notag\\
	\left.u\right|_{t=0} &= \nu \in M_b(\rd),\label{FPKE}\tag{FPE}
\end{align}
(here $M_b(\rd)$ denotes the set of bounded (signed) Borel measures on $\rd$).
In \cite{barbu2019nonlinear}, the authors consider also the case of a general non-diagonal diffusion matrix with explicit $x$-dependence. For simplicity we consider nonlinear Fokker--Planck--Kolmogorov equations of the above type.
As usual, \eqref{FPKE} is considered in the Schwartz-distributional sense. We will say that $u:(0,T)\to L^1(\rd)$, in short form $(u_t)_{t\in (0,T)}$, is a Schwartz-distributional solution to \eqref{FPKE} with $\left.u\right|_{t=0}=\nu \in M_b(\rd)$  if $u ,\beta(u) \in L^1_{loc}([0,T)\times\rd)$, 
$[0,T)\ni t\mapsto \int \varphi \left(\mathbbm{1}_{(0,T)}(t) u(t,x)\dx  + \mathbbm{1}_{\{0\}}(t)\mathrm d\nu\right)$ is continuous for all $\varphi \in C_b(\rd)$ and
\begin{align}\label{FPKE.test}
 0=\int_0^T\int_{\RR^d} &\left\langle\partial_t\varphi(s,x) + E(x)b(u(s,x)), \nabla \varphi(s,x)
\right\rangle_{\rd}u(s,x) + \Delta\varphi(s,x) \beta(u(s,x)) \dx\ds \nonumber \\
 &+ \int_\rd \varphi(0,x)\nu(\dx),
\end{align}
 for all $\varphi \in C_c^\infty([0,T)\times \rd)$. 
 If, additionally, $\nu \in \PPPP(\rd)$ and $u(t,\cdot) \in \PPPP(\rd)$ for all $t \in (0,T)$, then $u$ is simply called a \textit{probability solution} to \eqref{FPKE} (with $\left.u\right|_{t=0}=\nu$). Note that a probability solution $u$ can be uniquely extended to a continuous function $u:[0,T]\to \PPPP(\rd)$, where we consider the topology of narrow convergence on $\PPPP(\rd)$, see \cite[Lemma 2.3]{rehmeier2022nonlinearFlow}.

In applications, \eqref{FPKE} is used to describe particle transport in disordered media. A prominent particular example for this kind of equation is the classical porous medium equation, in which case $\beta(r)=|r|^{m-1}r, r\in \RR, m>1, E\equiv b\equiv 0$. It is most known for the usage as a model of diffusion of an ideal gas in a homogenous porous medium. For a deep account on the porous medium equation and equations of the general form \eqref{FPKE}, we refer to \cite{vazquez2007PME}, \cite{frank2005nlfpke} and the references therein, respectively.

In \cite{barbu2019nonlinear} (see also \cite{barbu2018Prob}), Barbu and R\"ockner introduced a general approach to solve \eqref{MVSDE} weakly in the probabilistic sense by first proving the existence of a Schwartz-distributional solution $u$ to \eqref{FPKE} and then, via a superposition principle procedure based on Trevisan's superposition principle for SDEs (\cite{trevisan_super}; see also the recent generalisation \cite{bogachev2021super} and a superposition principle in the nonlocal case \cite{roeckner2020levy}), constructing a probabilistically weak solution $(X,W)$ to \eqref{MVSDE}; this is done in such a way that the time marginal law densities are given by $u$, i.e. $\law{X(t)}=u_t(x)\dx$.  This procedure was applied under various conditions on the drift and diffusion coefficients, see \cite{barbu2020solutions, barbu2021evolution, barbu2021nonlinear}. For related uniqueness results we refer to \cite{barbu2019uniqueness, barbu2022uniqDeg}.

The above sketched approach constitutes a vital part to McKean's idea. In \cite{mckean1966class}, McKean envisioned a connection between certain classes of nonlinear PDEs, including the viscous Burgers' equation or the classical porous medium equations (in dimension one), and nonlinear Markov processes in such a way that the latter's transition probabilities solve the PDE.
Recently, Rehmeier and R\"ockner created a rigorous notion for a nonlinear Markov process in line with McKean's idea, see \cite{rehmeier2023nonlinear}. The authors proved the existence of nonlinear Markov processes for a large class of \eqref{MVSDE} including, in particular, those which are associated to the viscous Burgers' equation and the classical porous medium equation even in the multi-dimensional case. Furthermore, their results encompass the 2D vorticity Navier--Stokes equation and a large class of non-local Fokker--Planck equations.\\

The aim of this paper is to show that the probabilistically weak solutions for a wide class of \textit{degenerate} (i.e. $\beta(r)\slash r$ is allowed to vanish) McKean--Vlasov SDEs of type \eqref{MVSDE} provided by \cite{barbu2020solutions} are actually probabilistically strong solutions and pathwise unique among all weak solutions with time marginal law densities $u$, where $u$ denotes the probability solution constructed in \cite{barbu2020solutions}.
The class of McKean--Vlasov SDEs includes those associated to the classical porous medium equation perturbed by a nonlinear hyperbolic term of tensor type.
In order to be exact, we impose the following conditions on the coefficients of \eqref{MVSDE}.\\

Assume there exist $m\in (1,\infty)$ and $\zeta \in [0,1]$ with $\frac{2\zeta}{m}<1$ such that the following assumptions hold.
\begin{enumerate}[label={(H\arabic*)} , wide=0.5em,  leftmargin=*]
\item \label{condition.a.general} $\beta \in C^2(\RR)$; $\beta$ strictly increasing; $\beta(0)=0$; \\
	$\forall K>0$  $\exists a_K>0, C_K>0\ \forall r\in [0,K] : a_K r^{m-1} \leq \beta'(r)$ and  $\beta(r) \leq C_K r^{m}$.
\item \label{condition.b.general}
	\begin{enumerate}[label=(\alph*)]
		\item \label{condition.b.general:E} $E\in L^\infty(\rd;\rd), \divv E \in L^2_{loc}(\rd), (\divv E)^- \in L^\infty(\rd)$, and for all $R>0$ there exists $\iota_R \in L^1(B_R(0))$ such that for a.e. $x, y \in B_R(0)$
			\begin{align}\label{condition.E.monotonicity}
				\langle E(x)-E(y),x-y\rangle_{\rd} \leq \left(\iota_R(x) + \iota_R(y)\right)|x-y|_{\rd}^2.
			\end{align}
		\item \label{condition.b.general:b}
		\begin{enumerate}[label=(\roman*)]
			\item \label{condition.b.general:b.1} $b \in C_b(\RR)\cap C^1(\RR)$; $b\geq 0$; 
			\item\label{condition.b.general:b.2} $b\circ (G\circ\beta^\frac{1}{2})\inv \in\Lip_{loc}([0,\infty))$, where $G(r) := \int_0^r ((\beta\inv) (s^2))^{-\zeta}\ds, r\geq 0$.\vspace{0.5em}
			\end{enumerate}
			Additionally, one of the following conditions is assumed:
			\begin{enumerate}[leftmargin=1.4cm, label=(\roman*)]\setcounter{enumiii}{2}
				\item \label{condition.b.general:b.3} $\forall K>0\ \exists C_K >0\ \forall r\in [0,K]: \beta'(r)r \leq C_K \beta(r)$;
				
			\end{enumerate}
			\ \ or 
			\begin{enumerate}[leftmargin=1.4cm, label=(\roman*)]\setcounter{enumiii}{3}
				\item \label{condition.b.general:b.4} $E\in L^2(\rd;\rd)$, $\divv E \in L^2(\rd)+ L^\infty(\rd)$, $\divv E\geq 0$.
			\end{enumerate}
	\end{enumerate}
\end{enumerate}
\begin{remark}
	\begin{enumerate}[label=(\roman*)]
		\item Assume for a moment that $\beta$ is only assumed to be locally bounded. Then, the condition
		\begin{align*}
			\forall K>0\ \exists C_K>0\ \forall r\in [0,K]: \beta(r) \leq C_K r^m,
		\end{align*}
		is clearly equivalent to
		\begin{align*}
			\exists \varepsilon >0,C>0\ \forall r\in [0,\varepsilon]: \beta(r) \leq C r^m.
		\end{align*} 
		\item \ref{condition.a.general} implies that
			\begin{align*}
				\frac{\beta(r)}{r}\geq 0 \text{ with } \frac{\beta(0)}{0}=\beta'(0)= 0,
			\end{align*}
			and, therefore, that \eqref{MVSDE} is degenerate. Indeed, for some constant $C>0$,
			\begin{align*}
				\left|\frac{\beta(r)}{r}\right|\leq C|r|^{m-1} \to 0,\text{ as } r\downarrow 0.
			\end{align*}
		\item If $E \in H^1_{loc}(\rd;\rd)$, then clearly $\divv E \in L^2_{loc}(\rd)$ and, setting $\iota_R:= \M_{2R}|\nabla E|\ (\in L^2_{loc}(\rd))$, where $\M_{2R}$ is the local maximal function (see Definition \ref{appendix.definition.localmaximalfunction}), $E$ also fulfills \eqref{condition.E.monotonicity} by Lemma \ref{appendix.lemma.lipschitztypeestimate} and Lemma \ref{appendix.lemma.boundednesslocalmaximalfunction} from the Appendix.
		\item Note that, under condition \ref{condition.a.general},
			$G$, as introduced in \ref{condition.b.general} \ref{condition.b.general:b} \ref{condition.b.general:b.2}, is a real-valued function.
		\item \label{remark.conditons.global.5} Assume that \ref{condition.a.general} holds. Then, $G\inv \in C^1([0,\infty))$. Hence, additionally assuming \ref{condition.b.general}  \ref{condition.b.general:b} \ref{condition.b.general:b.1}, $b \circ (\beta^\frac{1}{2})\inv \in \Lip_{loc}(\RR)$ implies \ref{condition.b.general}  \ref{condition.b.general:b} \ref{condition.b.general:b.2}. Indeed, the first claim ist true: Since $G$ is bijective from $[0,\infty)$ to $[0,\infty)$, $G\inv$ exists. Furthermore, since $G'>0$ on $(0,\infty)$, $G\inv$ is continuously differentiable on $(0,\infty)$ with $(G\inv)'(r)= (\beta\inv((G\inv(r))^2	))^\zeta, r\in (0,\infty)$.
		\item Consider the case when $\beta(r) = |r|^{m-1}r, r\in \RR$. Assume that $b$ satisfies \ref{condition.b.general} \ref{condition.b.general:b} \ref{condition.b.general:b.1} and that $b(r)=r^l, r \in [0,\varepsilon]$ for some $\varepsilon>0$ and $l\geq 1$. Then, \ref{condition.b.general} \ref{condition.b.general:b} \ref{condition.b.general:b.2} is satisfied if and only if $l \geq \frac{m}{2}-\zeta$.
	\end{enumerate}
\end{remark}

In the following, $\PPPP_0(\RR^d)$ denotes the set of all probability densities with respect to the $n$-dimensional Lebesgue measure.
Also, we set
\begin{align*}
	\mathcal{Q}:=\{\text{$\Phi \in C^2(\rd)$ :  $\Phi \geq 1$; $\Phi^{-\alpha} \in L^1(\rd)$ for some $\alpha \in [2,\infty)$; $|\nabla\Phi|,\Delta \Phi \in L^\infty(\rd)$}\}
\end{align*}
Note that $\mathcal{Q}\neq \emptyset$, since $\Phi \in \mathcal{Q}$ with
$\Phi(x) := (1+|x|^2)^\gamma, x\in \rd,$ with $\gamma \in \left(0,\frac{1}{2}\right]$, see \cite[Proof of Lemma 3.3]{barbu2020solutions}.
Furthermore, we define for nonnegative Borel measurable $\Phi:\rd \to \RR$
$$L^1_\Phi(\rd):=\left\{f: \rd \to [-\infty,\infty] \ \ \BBBB(\rd)\text{-measurable} : ||f||_{L^1_\Phi}:=\int_\rd |f(x)| \Phi(x)\dx <\infty\right\}.$$

The first main result of this work says that under the conditions above, there exists a probabilistically strong solution to \eqref{MVSDE}, which is pathwise unique among weak solutions with time marginal law densities $u$ provided by Theorem \ref{theorem.FPKE.existence}. The corresponding precise statement in form of Theorem \ref{theorem.mainresult.1} is formulated in the terminology introduced on page \pageref{section.notation.solution}.
\begin{theorem}[main result 1]\label{theorem.mainresult.1}
	Let $d\neq 2$. Assume that \ref{condition.a.general} and \ref{condition.b.general} hold. Let $\nu \in \PPPP_0(\rd)\cap L^\infty(\rd)$ for which there exists $\Phi \in \mathcal{Q}$ such that $\Phi\in L^1_\Phi(\rd)$. Then, for every $T>0$, there exists a unique $P^{(u_t)}_\nu$-strong solution to \eqref{MVSDE} up to time $T$, where $u$ is the probability solution to \eqref{FPKE} with $\left.u\right|_{t=0}=\nu$ provided by Theorem \ref{theorem.FPKE.existence}.
\end{theorem}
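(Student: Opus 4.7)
The plan is to invoke a Yamada--Watanabe-type principle for McKean--Vlasov SDEs, for which the two ingredients are (a) existence of a probabilistically weak solution to \eqref{MVSDE} with time marginal law densities $u$, and (b) pathwise uniqueness among weak solutions with these fixed marginals. For (a), starting from the probability solution $u$ to \eqref{FPKE} with $u|_{t=0}=\nu$ furnished by Theorem \ref{theorem.FPKE.existence}, I would apply the superposition-principle construction of Barbu--R\"ockner (built on Trevisan's superposition principle, cf.\ also the generalisation in \cite{bogachev2021super}) to produce a weak solution $(X,W)$ of \eqref{MVSDE} satisfying $\law{X(t)} = u(t,\cdot)\,\dx$ for all $t\in[0,T]$.

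The heart of the proof is (b). The key observation is that once the time marginals are prescribed to be $u_t\,\dx$, the McKean--Vlasov equation \eqref{MVSDE} reduces to the classical (random, time-inhomogeneous) SDE
\begin{align*}
	\mathrm dY(t) = E(Y(t))b(u(t,Y(t)))\,\mathrm dt + \sqrt{2\beta(u(t,Y(t)))/u(t,Y(t))}\,\mathbbm 1_{d\times d}\,\mathrm dW(t),
\end{align*}
and pathwise uniqueness for \eqref{MVSDE} (in the class of weak solutions with marginal densities $u$) is equivalent to pathwise uniqueness for this linearised SDE. Given two such solutions $X^1,X^2$ on a common basis driven by the same $W$, I would apply It\^o's formula to the Yamada--Watanabe cut-off $\varphi_\varepsilon(|X^1-X^2|)$. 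The diffusion contribution is to be controlled via \ref{condition.b.general:b.2}: writing $\sqrt{\beta(r)/r} = (G\inv)'(G(\beta^{1/2}(r)))\cdot(\text{something smooth})$ up to the transformation $G\circ \beta^{1/2}$, together with the $L^\infty$-bound on $u$ (provided by Theorem \ref{theorem.FPKE.existence} since $\nu\in L^\infty$) and the Sobolev-type regularity of $\beta(u)$, one obtains the $1/2$-H\"older modulus needed to close the Yamada--Watanabe estimate even on the zero set of $u$. The drift contribution is handled via the one-sided estimate \eqref{condition.E.monotonicity} on $E$ combined with $b\in C^1_b$, giving after a DiPerna--Lions/Ambrosio-type regularisation (in which the assumptions $\divv E\in L^2_{loc}$ and $(\divv E)^-\in L^\infty$ play their standard commutator/superlinear-integrability role) a Gronwall inequality in $\EE\varphi_\varepsilon(|X^1_t - X^2_t|)$. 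Letting $\varepsilon\downarrow 0$ yields $X^1\equiv X^2$ a.s.

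With (a) and (b) in hand, the Yamada--Watanabe principle for McKean--Vlasov SDEs with prescribed marginal law densities (in the formulation introduced on p.~\pageref{section.notation.solution}) gives the existence of a unique $P^{(u_t)}_\nu$-strong solution to \eqref{MVSDE} on $[0,T]$, which moreover allows any weak solution with marginals $u_t$ to be written as a measurable functional of its driving Brownian motion, and conversely gives a weak solution when any Brownian motion is plugged into that functional.

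The main obstacle is step (b): the degeneracy $\beta(r)/r\to 0$ as $r\downarrow 0$ removes the standard non-degenerate Zvonkin--Veretennikov machinery that would otherwise absorb the roughness of $E$, so the monotonicity condition \eqref{condition.E.monotonicity} and the tailored composition in \ref{condition.b.general:b.2} must cooperate carefully. The precise choice of the transformation $G$ and of the exponent $\zeta$ (and the alternative hypothesis \ref{condition.b.general:b.3}/\ref{condition.b.general:b.4} that supplies either superlinear control of $\beta'(r)r$ by $\beta(r)$ or non-negativity of $\divv E$) is exactly what is needed to make the Yamada--Watanabe modulus absorb the vanishing of the diffusion on $\{u=0\}$ while the commutator error from regularising $E$ remains summable.
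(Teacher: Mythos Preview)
Your overall architecture is correct and matches the paper: obtain a $P^{(u_t)}_\nu$-weak solution via the Barbu--R\"ockner superposition principle, prove $P^{(u_t)}_\nu$-pathwise uniqueness for the linearised SDE with frozen marginals $u$, and conclude via the restricted Yamada--Watanabe theorem. However, your description of step (b) has genuine gaps and some confusions.

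\textbf{The missing ingredient.} The decisive input for pathwise uniqueness is an a priori regularity estimate that the paper proves separately (Theorem~\ref{theorem.FPKE.regularity.l1linfty}): for the mild solution $u$ one has $\nabla\beta^{1/2}(u)\in L^2((0,T)\times\rd)$, i.e.
\[
\int_0^T\int_\rd |\nabla\beta^{1/2}(u_s)|^2\,\dx\,\ds<\infty.
\]
This is obtained by testing the approximating elliptic equations with $\ln(\tilde\beta_\varepsilon(u_\varepsilon))$ (cut off and localised), controlling an entropy $\Psi(r)=\int_0^r\ln\beta(s)\,ds$, and passing to limits through the finite-difference scheme. This is where assumptions \ref{condition.b.general:b.3}/\ref{condition.b.general:b.4} and the weight $\Phi\in\mathcal{Q}$ (ensuring $\nu\in L^1_\Phi$) actually enter. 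Your proposal refers only to ``Sobolev-type regularity of $\beta(u)$'' and to the $L^\infty$-bound on $u$; neither of those, nor anything in Theorem~\ref{theorem.FPKE.existence} alone, yields the $H^1$ control of $\beta^{1/2}(u)$ without which the uniqueness argument cannot close.

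\textbf{How the uniqueness estimate actually runs.} The paper does \emph{not} use a classical Yamada--Watanabe cut-off and Gronwall. It applies It\^o's formula to the logarithmic test function $f_\delta(x)=\ln(1+|x|^2/\delta^2)$ and shows that $\sup_{\delta>0}\EE f_\delta(Z(t\wedge\tau_R))<\infty$, which forces $Z\equiv 0$. For the diffusion term one writes, via the mean-value theorem with an $\eta$-regularisation,
\[
\Big|\Big(\tfrac{\beta(u_s(x))}{u_s(x)+\eta}\Big)^{1/2}-\Big(\tfrac{\beta(u_s(y))}{u_s(y)+\eta}\Big)^{1/2}\Big|
\le h_\eta(s,x)\,\big|\beta^{1/2}(u_s(x))-\beta^{1/2}(u_s(y))\big|,
\]
with $u_s\,h_\eta^2$ uniformly bounded; one then mollifies $\beta^{1/2}(u_s)$, uses the Crippa--De Lellis maximal-function Lipschitz estimate (Lemma~\ref{appendix.lemma.lipschitztypeestimate}) and the $L^2$ bound on $\nabla\beta^{1/2}(u)$ just mentioned to get a $\delta$-uniform bound. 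The transformation $G$ from \ref{condition.b.general:b.2} is used for the \emph{drift} term (to control $|b(u_s(x))-b(u_s(y))|$ through $|\beta^{1/2}(u_s(x))-\beta^{1/2}(u_s(y))|$), not the diffusion; your proposal reverses these roles. The monotonicity-type condition \eqref{condition.E.monotonicity} on $E$ is used as you say, again via mollification and maximal-function bounds.
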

Our second main result  says that, in the one-dimensional case, for every $x_0\in\RR$, there exists a strong solution to \eqref{MVSDE} with $X_0 := x_0$, which is pathwise unique among all weak solutions with time marginal law densities given by the Barenblatt solution to the classical porous medium equation.
\begin{theorem}[main result 2]\label{theorem.mainresult.2}
	Let $d=1$, $m>1$, $x_0\in \RR$. Furthermore, let $\beta(r)=|r|^{m-1}r, r\in \RR$, and $E\equiv b\equiv 0$. Let $u$ denote the Barenblatt solution to \eqref{FPKE} with $\left.u\right|_{t=0}=\delta_{x_0}$ as in \eqref{FPKE.PME.BarenblattPattleSolution} below. Then, for every $T>0$, there exists a unique $P^{(u_t)}_{\delta_{x_0}}$-strong solution to \eqref{MVSDE} up to time $T$.
\end{theorem}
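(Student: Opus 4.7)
The plan is to exploit the explicit Barenblatt--Pattle formula for $u_t$ in order to reduce \eqref{MVSDE} to a purely time-inhomogeneous one-dimensional SDE and then to apply a Yamada--Watanabe argument directly. With $E \equiv b \equiv 0$ and $\beta(r) = |r|^{m-1}r$, the marginal constraint $\law{X(t)} = u_t\,\dx$ turns \eqref{MVSDE} into
\begin{equation*}
	\mathrm dX(t) = \sigma(t, X(t))\, \mathrm dW(t),\qquad X(0) = x_0,
\end{equation*}
where $\sigma(t, x) := \sqrt{2}\, u_t(x)^{(m-1)/2}$ is an explicit function. For $t > 0$, $u_t$ is supported in the Barenblatt front $|x - x_0| \leq c_1 t^{1/(m+1)}$ with sup-norm bounded by $c_2 t^{-1/(m+1)}$, so $\sigma$ is continuous on $(0, T] \times \RR$ and satisfies $\sigma(t, x) \leq c_3 t^{-(m-1)/(2(m+1))}$.

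Weak existence (with $X(0) = x_0$ a.s.\ and marginals $u_t\,\dx$) follows from the superposition principle applied exactly as in \cite{barbu2020solutions} to the Barenblatt probability solution $u$.

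The heart of the proof is pathwise uniqueness. A direct computation using the elementary inequality $(\sqrt{a_+} - \sqrt{b_+})^2 \leq |a - b|$, treating separately the cases where both points lie inside or outside the support of $u_t(\cdot)$, yields the quantitative H\"older-$\tfrac{1}{2}$ estimate
\begin{equation*}
	|\sigma(t, x) - \sigma(t, y)|^2 \leq c_m\, t^{-m/(m+1)}\, |x - y|, \qquad t \in (0, T],\ x, y \in \RR.
\end{equation*}
Crucially, $m/(m+1) < 1$, so $t \mapsto t^{-m/(m+1)}$ is integrable on $[0, T]$. Given any two weak solutions $X^{(1)}, X^{(2)}$ on the same stochastic basis with common Brownian motion $W$, $X^{(i)}(0) = x_0$, and marginals $u_t$, the Yamada--Watanabe argument with approximating functions $\phi_n$ satisfying $\phi_n''(u) \leq 2/(n u)$ and $\phi_n \to |\cdot|$, applied to $Z(t) := X^{(1)}(t) - X^{(2)}(t)$, yields $\EE[\phi_n(|Z(t)|)] \leq \tfrac{c_m}{n}\int_0^T s^{-m/(m+1)}\, \ds \to 0$ as $n \to \infty$, whence $X^{(1)} \equiv X^{(2)}$. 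By the classical Yamada--Watanabe theorem, weak existence together with pathwise uniqueness yields the claimed unique strong solution. The main obstacle is the singular behaviour of $\sigma$ as $t \downarrow 0$, caused by the Dirac initial datum; it is controlled by the sub-critical integrability $\int_0^T t^{-m/(m+1)}\, \mathrm dt < \infty$, which is itself a direct consequence of the self-similar scaling of the Barenblatt profile.
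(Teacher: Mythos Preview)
Your argument is correct and takes a genuinely different, more elementary route than the paper. The paper proves $P^{(u_t)}_{\delta_{x_0}}$-pathwise uniqueness by applying It\^o's formula to $r\mapsto |r|\ln(|r|^2/\delta^2+1)$, invoking the fractional regularity $u^{m/2}\in L^2((0,T);W^{1/2,2}(\RR))$ of the Barenblatt profile together with a maximal-function Lipschitz-type estimate for $W^{1/2,2}$ functions (from Champagnat--Jabin), and then concludes via the restricted Yamada--Watanabe theorem. You instead exploit the explicit self-similar structure of the Barenblatt solution directly: writing $\sigma(t,x)=\sqrt{2}\,t^{-(m-1)/(2(m+1))}\bigl(C-k(x-x_0)^2t^{-2/(m+1)}\bigr)_+^{1/2}$ and using $(\sqrt{a_+}-\sqrt{b_+})^2\le |a-b|$ plus the bound $|x+y-2x_0|\le Ct^{1/(m+1)}$ on the support yields the clean pointwise H\"older-$\tfrac12$ bound with time weight $t^{-m/(m+1)}\in L^1((0,T))$. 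This feeds straight into the classical one-dimensional Yamada--Watanabe pathwise-uniqueness argument, and since your uniqueness actually holds among \emph{all} weak solutions of the linearised SDE (not only those with marginals $u_t$), you may invoke the ordinary Yamada--Watanabe theorem; the strong solution then automatically inherits the marginals $u_t$ because it coincides with the superposition solution. Your approach is shorter and avoids fractional Sobolev machinery entirely, at the cost of being tied to the explicit Barenblatt formula; the paper's route is heavier but is designed to be robust under perturbations where no closed-form solution is available.
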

In contrast to the well-known nondegenerate case (see \cite{jourdain1998propagation, bossy2019moderated, grube2021strong, grube2022strongTime}), there are only few results on the existence of strong solutions to McKean--Vlasov SDEs with coefficients of Nemytskii-type in the degenerate case.
In \cite{Benachour96}, the authors constructed a strong solution to \eqref{MVSDE} associated to the one-dimensional classical porous medium equation with initial datum $\nu \in \PPPP_0(\RR)\cap C_b(\RR)$ such that $\int_\RR \nu(x)|x|^n dx<\infty$, where $n> 2$, and $-\partial_{x}^2 (\nu^{m-1})\leq C$ in the sense of Schwartz-distributions. In the case $m>3$, the conditions on the initial data $\nu$ can be relaxed to $\nu \in \PPPP_0(\RR) \cap L^\infty(\RR)$ as was shown in \cite{grube2021strong} (see also \cite[Section 4.5.2]{grube2022thesis}).
Now, with Theorem \ref{theorem.mainresult.1}, we are able to treat the multidimensional case and, furthermore, even allow for first-order perturbations. 
To the best of the authors' knowledge, both of these aspects have not been considered in the literature so far.
Also, the existence of probabilistically strong solutions to McKean--Vlasov SDEs of the type \eqref{MVSDE} seems to not have been considered in the literature so far in the case when $X_0$ is distributed with a singular part with respect to Lebesgue measure. Therefore, to the best of the authors' knowledge, the result formulated in Theorem \ref{theorem.mainresult.2} is new as well.

The proofs of both our main results relies on the restricted Yamada--Watanabe theorem, Theorem \ref{theorem.restrictedYamadaWatanabe}, from \cite{grube2021strong} (see also \cite{grube2022thesis}).
Since the existence of weak solutions has already been proved in \cite{barbu2020solutions}, the task of this paper is to prove suitable pathwise uniqueness results. 
The most important ingredient for the pathwise uniqueness result related to Theorem \ref{theorem.mainresult.1} is Theorem \ref{theorem.FPKE.regularity.l1linfty}.
There we prove that the in \cite{barbu2020solutions} constructed Schwartz-distributional solution $u$ to \eqref{FPKE} with $\left.u\right|_{t=0}=\nu \in L^\infty(\rd)\cap L^1_\Phi(\rd)$, where $\nu \geq 0$ a.e. and $\Phi \in \mathcal{Q}$, satisfies the following estimate:
For all $T>0$ there exists $C_T\geq 0$ such that for all $t\in (0,T)$
	\begin{align}\label{FPKE.estimate.regularity.beta12}
		\int_\rd \Psi(u_t(x))\dx + \int_0^t \int_\rd |\nabla_x \beta^{\frac{1}{2}}(u(s,x))|^2\dx\ds\leq\int_\rd \Psi(\nu(x))\dx + C_T,
	\end{align}
	where $\Psi(r):= \int_0^r \ln(\beta(s))\ds, r\geq 0$. This result is strongly motivated by \cite{gess2023inventiones}.
	Fehrmann and Gess developed this kind of an estimate in order to prove, in particular, new existence and regularity theorems for analytically weak solutions to a large class of $L^1$ energy critical FPEs of the form \eqref{FPKE} on the $d$-dimensional torus with drift coefficients $(t,x,r) \mapsto E(t,x)b(r)$, where $E \in L^2$, $b(r)=\frac{\beta^\frac{1}{2}(r)}{r}$, and $\beta$ is taken from a large class of functions, which includes those of monomial type, namely $\beta(r)=|r|^{m-1}r$ for $m\geq 1$. 
	For \eqref{FPKE.estimate.regularity.beta12}, we use a similar idea of proof with differences owed to the transfer of their result to the mild solution framework in $L^1(\rd)$ (within the Crandall--Ligget theory), in which the Schwartz-distributional solution $u$ from above was constructed in \cite{barbu2020solutions}.
	
	The combination of the so obtained regularity of $\beta^\frac{1}{2}(u)$ and an estimate based on the mean-value theorem motived by \cite[(76)-(77)]{gess2023inventiones}, see e.g. \eqref{theorem.MVSDE.PU:trick1}, are key to our pathwise uniqueness result, Theorem \ref{theorem.MVSDE.PU}. The latter is a proper modification of the restricted pathwise uniqueness result in the proof of \cite[Theorem 1.1]{roeckner2010weakuniqueness}.
	The pathwise uniqueness result related to our second main result, Theorem \ref{theorem.mainresult.2}, is formulated in Theorem \ref{theorem.MVSDE.PME.PU}. Here, we already note that the Barenblatt solution $u$ does not satisfy \eqref{FPKE.estimate.regularity.beta12}, since $u^\frac{m}{2} \notin L^2((0,T);W^{1,2}(\rd))$. However, $u^\frac{m}{2} \in L^2((0,T);W^{s,2}(\rd))$, for all $s\in (0,1)$. This turns out to be enough to prove a suitable pathwise uniqueness result in dimension one. The proof is comparable to the one of Theorem \ref{theorem.MVSDE.PU}, but additionally, uses the technique of the one-dimensional result on the existence of strong solutions to rough (ordinary) stochastic differential equations \cite[Theorem 2.15 (i)]{champagnat2018} based on a Lipschitz type estimate for functions with fractional Sobolev regularity of order $\nicefrac{1}{2}$.
	\\

Within this work (as well as in \cite{grube2021strong,grube2022strongTime}), we consider \eqref{MVSDE} as a McKean--Vlasov SDE of the form
\begin{align*}
	dX(t) \notag
	=&\ \bm{b}(t,X(t),\law{X(t)})\mathrm dt  + \bm{\sigma}(t,X(t),\law{X(t)}) \mathrm dW(t),\ \ t\in [0,T], \notag \\
	X(0)=&\ X_0,
\end{align*}
with coefficients
\begin{align*}
	[0,T]\times\rd\times\PPPP(\rd)\ni(t,x,\nu) &\mapsto \bm{b}(t,x,\nu):= E(x)b(v_a(x)),\\
	[0,T]\times\rd\times\PPPP(\rd)\ni(t,x,\nu) &\mapsto \bm{\sigma}(t,x,\nu):=\sqrt{2\frac{\beta(v_a(x))}{v_a(x)}}\mathbbm{1}_{d\times d},
\end{align*}
where $\PPPP(\rd)$ is the set of all Borel probability measures on $\rd$ equipped with the topology of narrow convergence of probability measures, $v_a$ denotes the version of the density of the absolutely continuous part of $\nu$ both with respect to Lebesgue measure which is obtained by setting $v_a(x)= 0$ if $\lim_{R\to 0}\nicefrac{\nu(B_R(x))}{\lambda^d(B_R(0))}$ does not exist in $\RR$. By the Besicovitch derivation theorem, this makes the map $\rd\times\PPPP(\rd)\ni(x,\nu)\mapsto v_a(x)$ and therefore also
$\bm{b}$ and $\bm{\sigma}$ Borel measurable (for details see \cite[Section 4.2]{grube2022thesis}).
The dependence of $\bm{b}$ and $\bm{\sigma}$ on $\nu$ in terms of $v_a$ evaluated at a fixed point $x$ excludes the continuity of $\bm{b}$ and $\bm{\sigma}$ in their measure-component with respect to the topology of weak convergence of probability measures, Wasserstein distance or bounded variation norm.
These types of continuity assumptions are made in the major part of the literature (see, e.g. \cite{delarue2018mckean}).
\\

This paper is structured as follows.
	First, we introduce some necessary notation and recall the solution and uniqueness concepts for \eqref{MVSDE} from \cite{grube2021strong}.
	In Secton \ref{section.restrictedYamadaWatanabeTheorem}, we recall the restricted Yamada--Watanabe theorem from \cite{grube2021strong}, which is employed in the proof of the main results Theorem \ref{theorem.mainresult.1} and Theorem \ref{theorem.mainresult.2}.
	In Section 3, we prove the first main result, Theorem \ref{theorem.mainresult.1}. In Section 4, we prove the second main result, Theorem \ref{theorem.mainresult.2}.
	In Appendix \ref{section.appendix.maximalfunction}, we gather the  definition of and some facts on the (local) Hardy--Littlewood maximal function. In Appendix \ref{section.appendix.mildsolutionframework}, we recapitulate the mild solution framework according to Crandall and Ligget in a concise way.
	
\section*{Notation}
Within this paper we will use the following notation, which is essentially taken from \cite{grube2021strong}.
For a topological space $(\textbf{T},\tau)$, $\BBBB(\textbf{T})$ shall denote the Borel $\sigma$-algebra on $(\textbf{T},\tau)$. Throughout this article, $d,n,k$ denote natural numbers.
 
On $\RR^n$, we will always consider the usual $n$-dimensional Lebesgue measure $\lambda^n$ if not said any differently. If there is no risk for confusion, we will just say that some property for elements in $\RR^n$ holds \textit{almost everywhere} (or $\textit{a.e.}$) if and only if it holds $\lambda^n$-almost everywhere.
Furthermore, on $\RR^n$, $|\cdot|_{\RR^n}$ denotes the Euclidean norm. If there is no risk for confusion, we will just write $|\cdot|=|\cdot|_{\RR^n}$. By $B_R(x)$ we will denote the usual open ball with center $x\in \RR^n$ and radius $R>0$. Furthermore, we set $s\wedge t := \mathrm{min}(t,s),\ s\vee t := \mathrm{max}(t,s)$ for all $s,t \in \RR$.

Let $(S,\SSS,\eta)$ be a measure space. For $1\leq p \leq \infty$, $(L^p(S;E), \norm[{L^p(S;E)}]{\cdot})$ symbolises the usual Bochner space of strongly measurable $E$-valued functions $f$ on $S$ for which $\norm[E]{f}^p$ is integrable. 
If $S=\RR^n$ and $E=\RR$, we just write $L^p(\RR^n;\RR)=L^p(\RR^n)$. The set of strongly measurable functions on $\RR^n$ with values in $E$ which are locally $p$-integrable in norm on $E$ will be denoted by $L^p_{loc}(\RR^n;E)$.
Moreover, $(W^{k,p}(\RR^n), \norm[{W^{1,p}(S;E)}]{\cdot})$ denotes the usual Sobolev space, containing all $L^p(\RR^n)$-functions, whose Schwartz-distributional derivatives up to $k$-th order can be represented by elements in $L^p(\RR^n)$. 
Accordingly, $E$-valued first-order Sobolev functions on $\RR^n$ will be denoted by $W^{k,p}(\RR^n;E)$. In the special case when $p=2$, we also write $W^{k,2}=H^k$.
For $\eta\in (0,\infty)$ of the form $\eta=s+k$, where $s\in (0,1)$ and $k\in \LN_0$, $\dot{W}^{\eta,p}(\RR^n)$ denotes the homogeneous Sobolev--Slobodeckij space, which consists of all functions $f\in L^1_{loc}(\RR^n)$ such that for all $\alpha \in \LN^k$ with $|\alpha|=k$
\begin{align*}
	[D^\alpha f]_{\dot{W}^{s,p}(\RR^n)} :=\left( \int_\rd\int_\rd \frac{|D^\alpha f(x)-D^\alpha f(y)|^p}{|x-y|^{sp+d}}\dx\dy\right)^\frac{1}{p} <\infty,
\end{align*}
with norm $||f||_{\dot{W}^{\eta,p}}:=\sup_{|\alpha|=k}[D^\alpha f]_{\dot{W}^{s,p}(\RR^n)}$.
Here, $D^\alpha$ denotes the usual (Schwartz-distributional) derivative of order $k$, where for each $i\in \{1,...,d\}$, the derivative in direction $x_i$ is taken $\alpha_i$-times.
The nonhomogeneous Sobolev--Slobodeckij space is defined as $W^{\eta,p}(\RR^n):= L^p(\rd) \cap \dot{W}^{\eta,p}(\RR^n)$ with norm $||f||_{W^{\eta,p}(\rd)}:=||f||_{L^p(\rd)}+||f||_{\dot{W}^{\eta,p}(\RR^n)}$. Both the homogenous and nonhomogenous Sobolev--Slobodeckij space are Banach spaces.
Recall that $W^{s,2}(\RR^n)= \{f \in L^2(\RR^n) : \mathcal{F}\inv (|\xi|^s \mathcal{F}(f))\in L^2(\RR^n)\}$. Here, $\mathcal{F}$ denotes the Fourier transform on the tempered Schwartz-distributions. For $f \in W^{s,2}(\RR^n)$ we set $\nabla^sf := \mathcal{F}\inv (|\xi|^s \mathcal{F}(f))$, which is also known as the Riesz derivative. 
Furthermore, for every real-valued function, we set $f^- := -\min (f,0)$ and $f^+ := \max(f,0)$.

Let $(M,d)$ be a metric space. Then $\PPPP(M)$ denotes the set of all Borel probability measures on $(M,d)$. We will consider $\PPPP(M)$ as a topological space with respect to the topology of narrow convergence of probability measures.
By $\PPPP_0(\RR^n)$ we will denote the set of all probability densities with respect to Lebesgue measure, i.e.
\begin{align*}
	\PPPP_0(\RR^n) = \left\{ \rho \in L^1(\RR^n)\ |\ \rho\geq 0 \text{ a.e.}, \int_{\RR^n} \rho(x)\dx =1\right\}.
\end{align*}

By $C_c^\infty(\RR^n)$ we denote the set of all infinitely differentiable functions with compact support. 
Let $(E,\norm[E]{\cdot})$ be a Banach space.
The set of continuous functions on the interval $[0,T]$ with values in $E$ is denoted by $C([0,T];E)$ and is considered with respect to the usual supremum's norm.
Furthermore, we define $$C([0,T];E)_0 := \{w \in C([0,T];E): w(0)=0\}.$$
For $t\in [0,T]$, $\pi_t: C([0,T];E) \to E$ denotes the canonical evaluation map at time $t$, i.e. $\pi_t(w):=w(t), w\in C([0,T];E)$. Further, we set $\BBBB_t(C([0,T];E)):= \sigma(\pi_s : s\in [0,t])$ and, correspondingly, $\BBBB_t(C([0,T];E)_0):= \sigma(\pi_s: s\in [0,t])\cap C([0,T];E)_0$.
Moreover, $\PP_W$ denotes the Wiener measure on $(C([0,T];\rd)_0,\BBBB(C([0,T];\rd)_0))$.

Throughout this article, $C\in (0,\infty)$ will denote a generic constant, which may change from line to line and which may depend on the spacial dimension $d$ and the final (finite) time $T$.
The derivative $\partial_z$ denotes the derivative with respect to a scalar $z$-coordinate. Furthermore, $\divv=\divv_x$, $\nabla = \nabla_x$, $\Delta = \Delta_x$ and $D=D_x$ denote the divergence, gradient, Laplacian and Jacobian with respect to the spacial $x$-coordinate. All the derivatives are supposed to be understood in the sense of Schwartz-distributions.
\section*{$P^{(u_t)}_\nu$-solutions to \eqref{MVSDE}}
\label{section.notation.solution} 
Let us briefly recall the solution concepts for \eqref{MVSDE} from \cite{grube2021strong} (see also \cite{grube2022thesis}).

Let $\nu \in \PPPP(\rd)$ and suppose $u$ is a probability solution to \eqref{FPKE} with $\left.u\right|_{t=0}=\nu$. We set
\begin{align*}
	P^{(u_t)}_\nu:=\{Q \in \PPPP(C([0,T];\rd)): Q \circ \pi_0\inv = \nu, Q \circ \pi_t\inv = u_t(x)\dx\  \forall t\in (0,T]\}.
\end{align*}

A \textbf{$P^{(u_t)}_\nu$-weak solution} $(X,W,(\Omega,\FF,\PP;(\FF_t)_{t\in[0,T]}))$ is a (probabilistically) weak solution $(X,W, (\Omega,\FF,\PP;(\FF_t)_{t\in[0,T]}))$ to \eqref{MVSDE} in the usual sense such that $\law{X(0)}=\nu$ and $\law{X(t)} = u_t(x)\dx$, for all $t\in (0,T]$. For the convenience of the reader, we will just write $(X,W)=(X,W, (\Omega,\FF,\PP;(\FF_t)_{t\in[0,T]}))$ in cases in which we do not need to refer explicitly to the underlying stochastic basis $(\Omega,\FF,\PP;(\FF_t)_{t\in[0,T]})$.

We will say that \eqref{MVSDE} has a \textbf{$P^{(u_t)}_\nu$-strong solution} if there exists a function $F: \rd \times C([0,T];\rd)_0\to C([0,T];\rd)$, which is $\overline{\BBBB(\rd)\otimes \BBBB(C([0,T];\rd)_0)}^{\nu\otimes \PP_W}\slash \BBBB(C([0,T];\rd))$-measurable, such that, for $\nu$-a.e. $x\in \rd$, $F(x,\cdot)$ is $\overline{\BBBB_t(C([0,T];\rd)_0)}^{\PP_W}\slash \BBBB_t(C([0,T];\rd))$-measurable for all $t\in [0,T]$ and, whenever $X_0$ is an $\FF_0$-measurable function with $\law{X_0} = \nu$ and $W$ is a standard $d$-dimensional $(\FF_t)$-Brownian motion on some stochastic basis $(\Omega,\FF,\PP;(\FF_t)_{t\in [0,T]})$, $(F(X_0,W),W, (\Omega,\FF,\PP;(\FF_t)_{t\in[0,T]}))$ is a $P^{(u_t)}_\nu$-weak solution to \eqref{MVSDE}.
Here, $\overline{\BBBB(\rd)\otimes \BBBB(C([0,T];\rd)_0)}^{\nu\otimes \PP_W}$ denotes the completion of $\BBBB(\rd)\otimes \BBBB(C([0,T];\rd)_0)$ with respect to the measure $\nu\otimes \PP_W$, and $\overline{\BBBB_t(C([0,T];\rd)_0)}^{\PP_W}$ denotes the completion of $\BBBB_t(C([0,T];\rd)_0)$ with respect to $\PP_W$ on $(C([0,T];\rd)_0,\BBBB(C([0,T];\rd)_0))$.

Moreover, \textbf{$P^{(u_t)}_\nu$-pathwise uniqueness} holds for \eqref{MVSDE} if for every two $P^{(u_t)}_\nu$-weak solutions $(X,W, (\Omega,\FF,\PP;(\FF_t)_{t\in[0,T]}))$, $(Y,W, (\Omega,\FF,\PP;(\FF_t)_{t\in[0,T]}))$ (with respect to the same Brownian motion on the same stochastic basis) with $X(0)=Y(0)$ $\PP$-a.s., one has ${\sup_{t\in [0,T]}|X(t) - Y(t)|}=0$ $\PP$-a.s.

We say that there exists a \textbf{unique $P^{(u_t)}_\nu$-strong solution} to \eqref{MVSDE} if there exists a $P^{(u_t)}_\nu$-strong solution to \eqref{MVSDE} with functional $F$ as above, and every $P^{(u_t)}_\nu$-weak solution $(X,W)$ is of the form $X = F(X(0),W)$ almost surely with respect to the underlying probability measure.
\section{The restricted Yamada--Watanabe theorem}\label{section.restrictedYamadaWatanabeTheorem}
We recall the restricted Yamada--Watanabe theorem from \cite{grube2021strong} (see also \cite{grube2022thesis}), which is a modification of the original Yamada--Watanabe theorem. In particular, it has the advantage that one can conclude the existence of a strong solution to \eqref{MVSDE} under a relaxed pathwise uniqueness condition compared to the original Yamada--Watanabe theorem.

The following theorem can essentially be found in \cite[Theorem 3.3]{grube2021strong}. 
\begin{theorem}
\label{theorem.restrictedYamadaWatanabe}
Let $\nu \in \PPPP(\rd)$ and suppose $u$ is a probability solution to \eqref{FPKE} such that $\left.u\right|_{t=0}=\nu$.
The following statements regarding \eqref{MVSDE} are equivalent. 
\begin{enumerate}[label=(\roman*)]
	\item There exists a $P^{(u_t)}_\nu$-weak solution and $P^{(u_t)}_\nu$-pathwise uniqueness holds.
	\item There exists a unique $P^{(u_t)}_\nu$-strong solution to \eqref{MVSDE}.
\end{enumerate}
\end{theorem}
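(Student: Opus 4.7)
The plan is to establish this as a restricted version of the classical Yamada--Watanabe theorem, the only novelty being that pathwise uniqueness is demanded solely within the subclass $P^{(u_t)}_\nu$ of weak solutions whose time marginals are prescribed by $u$.

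The direction (ii) $\Rightarrow$ (i) is essentially tautological. Given the strong solution functional $F$, plugging an arbitrary $(X_0,W)$ with $\law{X_0}=\nu$ on any stochastic basis into $F$ yields a $P^{(u_t)}_\nu$-weak solution $F(X_0,W)$ by definition; and if $(X,W),(Y,W)$ are two $P^{(u_t)}_\nu$-weak solutions on a common basis with $X(0)=Y(0)$ $\PP$-a.s., then by the uniqueness part of (ii) both must coincide with $F(X(0),W)=F(Y(0),W)$ $\PP$-a.s.

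For (i) $\Rightarrow$ (ii) I would run the standard joint-law disintegration argument. Pick any two $P^{(u_t)}_\nu$-weak solutions $(X_i,W_i)$ on $(\Omega_i,\FF_i,\PP_i;(\FF^i_t))$, $i=1,2$. Because $X_i(0)$ is $\FF^i_0$-measurable and $W_i$ is an $(\FF^i_t)$-Brownian motion, $X_i(0)$ and $W_i$ are independent, so the joint law of $(X_i(0),W_i)$ on $\rd\times C([0,T];\rd)_0$ equals $\nu\otimes\PP_W$ for both $i$. Disintegrate the law of $X_i$ given $(X_i(0),W_i)$ to obtain regular conditional kernels $Q_i(x,w;\cdot)$ on $C([0,T];\rd)$, and define on the product space $\rd\times C([0,T];\rd)_0\times C([0,T];\rd)^2$ the probability measure
\[
\mu(\mathrm dx,\mathrm dw,\mathrm d\eta_1,\mathrm d\eta_2) := Q_1(x,w;\mathrm d\eta_1)\,Q_2(x,w;\mathrm d\eta_2)\,\nu(\mathrm dx)\,\PP_W(\mathrm dw),
\]
together with the usual augmentation of the filtration generated by the coordinate processes. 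Under $\mu$, the second coordinate furnishes a single Brownian motion $\tilde W$ driving two processes $\tilde X^1,\tilde X^2$ with $\tilde X^1(0)=\tilde X^2(0)$ $\mu$-a.s. Each $\tilde X^i$ has the same law as $X_i$, so the marginal condition $\law{\tilde X^i(t)}=u_t(x)\dx$ is preserved and each $\tilde X^i$ is a $P^{(u_t)}_\nu$-weak solution to \eqref{MVSDE} driven by $\tilde W$. The hypothesis of $P^{(u_t)}_\nu$-pathwise uniqueness then gives $\tilde X^1=\tilde X^2$ $\mu$-a.s. This forces $Q_1(x,w;\cdot)=Q_2(x,w;\cdot)$ for $\nu\otimes\PP_W$-a.e.\ $(x,w)$, and this common kernel must be a Dirac mass, whose atom defines the required functional $F(x,w)$; the measurability of $F$ in the completion $\overline{\BBBB(\rd)\otimes\BBBB(C([0,T];\rd)_0)}^{\nu\otimes\PP_W}$ and the adaptedness of $F(x,\cdot)$ for $\nu$-a.e.\ $x$ follow since each $Q_i$ inherits adaptedness in its $w$-variable from the adaptedness of $X_i$ to $(\FF^i_t)$.

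The hardest part is the measure-theoretic bookkeeping: one must verify that, on the product space, each coordinate process $\tilde X^i$ actually solves the stochastic integral equation with respect to the \emph{common} coordinate Brownian motion $\tilde W$ (and not merely in distribution), which requires a careful transfer-of-stochastic-integrals argument together with a check that $\tilde W$ is a Brownian motion in the enlarged filtration. Since exactly this argument has been carried out in detail in \cite[Theorem 3.3]{grube2021strong}, in practice one simply cites that result; what is conceptually important here is that every step of the construction automatically preserves membership in $P^{(u_t)}_\nu$, so the \emph{restricted} pathwise uniqueness hypothesis is enough.
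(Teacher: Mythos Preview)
Your proposal is correct and aligns with the paper's treatment: the paper does not prove Theorem~\ref{theorem.restrictedYamadaWatanabe} at all but simply records it as \cite[Theorem 3.3]{grube2021strong}. Your sketch of the classical joint-law disintegration argument, together with the observation that each step preserves membership in $P^{(u_t)}_\nu$, is precisely the content of that reference, so you are in fact supplying more detail than the paper itself.
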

\section{Proof of Theorem \ref{theorem.mainresult.1}}\label{section.theorem.mainresult.1.proof}

In this section, we prove the first main result, Theorem \ref{theorem.mainresult.1}. This section is split into three subsections.
In Section \ref{section.theorem.FPKE.existence}, we will recall the results on the existence of solutions to \eqref{FPKE} from \cite{barbu2020solutions} and, moreover, prove a regularity result for $\beta^\frac{1}{2}(u)$, where $u$ is the solution to \eqref{FPKE}, which was constructed in \cite{barbu2020solutions}.
Here, the presentation of the techniques and results of \cite{barbu2020solutions} is similar to the one in \cite[Appendix D.2]{grube2022thesis}.
In Sections \ref{subsection.theorem.mainresult.1.proof:ingredient1} and \ref{subsection.theorem.mainresult.1.proof:ingredient2}, we then show that the conditions (i) and (ii) of the restricted Yamada--Watanabe, see Theorem \ref{theorem.restrictedYamadaWatanabe}, are fulfilled, respectively.

\begin{proof}[Proof of Theorem \ref{theorem.mainresult.1}]
	The assertion follows directly from Theorem \ref{theorem.MVSDE.existence} and Theorem \ref{theorem.MVSDE.PU} via Theorem \ref{theorem.restrictedYamadaWatanabe}.
\end{proof}

\subsection{Existence of a regular Schwartz-distributional solution to \eqref{FPKE}}\label{section.theorem.FPKE.existence}

In \cite{barbu2020solutions}, Barbu and R\"ockner proved the existence of a Schwartz-distributional solution to \eqref{FPKE} with initial datum $\nu \in L^1(\rd)\cap L^\infty(\rd)$ under even more general conditions on $E,b,\beta$ than \ref{condition.a.general}, \ref{condition.b.general}, namely
\begin{enumerate}[label=(C\arabic*) , wide=0.5em,  leftmargin=*]
	\item \label{condition.a.general.l1linfty} $\beta \in C^2(\RR)$, $\beta$ is monotonically nondecreasing, $\beta(0)=0$, 
	\item \label{condition.b.general.l1linfty} 
	$E\in L^\infty(\rd;\rd), \divv E \in L^2_{loc}(\rd), (\divv E)^- \in L^\infty(\rd)$,\\
	$b \in C_b(\RR)\cap C^1(\RR)$, $b\geq 0; b = \text{const.}$ if $\beta$ is not strictly increasing.
\end{enumerate}
Therefore, they considered \eqref{FPKE} (up to time $T=\infty$) as an evolution equation in $L^1(\rd)$ of the form
\begin{align}\label{FPKE.Op}\tag{FPE.Op}
	\partial_t u(t) + A(u(t))  &= 0, \ \ t \in [0,\infty),\\
			\left.u\right|_{t=0} &= \nu,\notag
\end{align}
where $(A,D(A))$ is a nonlinear operator acting on $L^1(\rd)$, which coincides with the operator $(A_0, D(A_0))$ defined as
\begin{align*}
	A_0 (u) &:= \divv(Eb(u)u)-\Delta\beta(u), u\in D(A_0),\\
	D(A_0)&:= \{ u \in L^1(\rd): \beta(u) \in L^1_{loc}(\rd), -\Delta \beta(u) + \divv(Eb(u)u) \in L^1(\rd)\},
\end{align*}
on a domain $D(A)\subset D(A_0)$ to be specified later on.
The following lemma is a special case of \cite[Lemma 3.1]{barbu2020solutions}.
\begin{lemma}[\text{cf. \cite[Lemma 3.1]{barbu2020solutions}}]\label{BR21.lemma3.1}
	Assume that \ref{condition.a.general.l1linfty} and \ref{condition.b.general.l1linfty} hold. We have
	\begin{align*}
		\mathrm{Range}(I+\lambda A_0 )=L^1(\rd) \ \ \ \forall \lambda >0,
	\end{align*}
	and there exists an operator $J_\lambda: L^1(\rd) \to L^1(\rd)$ such that
	\begin{align}\label{BR21.lemma3.1:1}
	J_\lambda (v) \in (I + \lambda A_0)\inv (v)\ \ \ &\forall v \in L^1(\rd),\notag\\
		\norm[L^1]{J_\lambda (v) - J_\lambda (\bar{v})	} \leq  \norm[L^1]{v- \bar{v}	}\ \ \ &\forall v, \bar{v} \in L^1(\rd), \lambda >0,\\
	\label{lemma.FPKE.A_epsilon:1}
	J_{\lambda_2}(v) = J_{\lambda_1}\left(\frac{\lambda_1}{\lambda_2}v+\left(1-\frac{\lambda_1}{\lambda_2}\right)J_{\lambda_2}(v)  \right) \ \ \ &\forall 0<\lambda_1, \lambda_2 < \infty.
	\end{align}
	Moreover,
	\begin{align}\label{BR21.lemma3.1:4}
	\norm[L^\infty]{J_\lambda (v)} \leq \left(1+ \norm[L^\infty]{(\divv E)^- + |E|}^\frac{1}{2}\right)\norm[L^\infty]{v} \ \ \ \forall v \in L^1(\rd)\cap L^\infty(\rd),\notag\\
	0<\lambda < \lambda_0 :=\left(\norm[L^\infty]{(\divv E)^- + |E|}+ \left(\norm[L^\infty]{(\divv E)^- + |E|}^\frac{1}{2} \right) \norm[L^\infty]{b}\right)\inv
	\end{align}
	and
		\begin{align}\label{BR21.lemma3.1:5}
		J_\lambda (\PPPP_0(\rd)) \subset \PPPP_0(\rd)\ \ \ \forall\lambda >0.
	\end{align}
\end{lemma}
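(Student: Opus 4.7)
The plan is to solve, for each $v \in L^1(\rd)$ and $\lambda>0$, the stationary equation
\begin{equation*}
u - \lambda\Delta\beta(u) + \lambda\divv(E\,b(u)u) = v,
\end{equation*}
by a two-level approximation. First, on the parameter side, regularize by replacing $\beta$ with $\beta_\varepsilon(r)=\beta(r)+\varepsilon r$ (so the principal part becomes uniformly elliptic of monotone type), truncating $b$ smoothly, and smoothing $E$ by mollification while preserving $L^\infty$-bounds on $|E|$ and $(\divv E)^-$. For each fixed $\varepsilon$, existence of a solution $u_\varepsilon \in H^1(\rd)\cap L^\infty(\rd)$ is obtained by the Browder--Minty monotonicity method applied to the principal part $u \mapsto u - \lambda\Delta\beta_\varepsilon(u)$ after linearising the first-order term: freeze $w$ in $\divv(E\,b(w)w)$, solve the resulting strictly monotone problem, and close via a Schauder fixed point using the compactness provided by the uniform ellipticity.

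Next I would establish $\varepsilon$-uniform estimates. For the $L^\infty$ bound \eqref{BR21.lemma3.1:4}, I would test against $(u_\varepsilon - M)^+$ for a constant $M$ to be chosen; the diffusion contributes a nonnegative term, $\int (u_\varepsilon - M)(u_\varepsilon-M)^+\dx$ gives an $L^2$ bound, and the drift is estimated through integration by parts using $\|E\|_{L^\infty}$, $\|b\|_{L^\infty}$, and $\|(\divv E)^-\|_{L^\infty}$. Choosing $\lambda<\lambda_0$ is precisely what allows one to absorb the drift contribution into the left-hand side and arrive at $M=(1+\|(\divv E)^-+|E|\|_{L^\infty}^{1/2})\|v\|_{L^\infty}$. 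For the $L^1$-contraction \eqref{BR21.lemma3.1:1}, subtract the equations for $u_\varepsilon$ and $\bar u_\varepsilon$ corresponding to data $v,\bar v$, multiply by a smooth approximation $\operatorname{sign}_\delta(u_\varepsilon-\bar u_\varepsilon)$, and integrate. The strict monotonicity of $\beta_\varepsilon$ renders the Laplacian contribution nonpositive (Kato's inequality in disguise); the transport term is handled by rewriting $\divv(E(b(u_\varepsilon)u_\varepsilon - b(\bar u_\varepsilon)\bar u_\varepsilon))$ and using $\|(\divv E)^-\|_{L^\infty}<\infty$ with a Gronwall-type absorption. Passing $\delta\downarrow 0$ yields the desired contraction.

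Having these uniform bounds, I would pass $\varepsilon\downarrow 0$: along a subsequence $u_\varepsilon \rightharpoonup u$ weakly-$\ast$ in $L^\infty$, $\beta_\varepsilon(u_\varepsilon)$ is bounded in $H^1_{\mathrm{loc}}$ so converges strongly in $L^2_{\mathrm{loc}}$, and the Minty--Browder trick identifies the limit as $\beta(u)$. This yields a solution of $u+\lambda A_0(u)=v$, so $\mathrm{Range}(I+\lambda A_0)=L^1(\rd)$, and the $L^1$-contraction passes through the limit to show that selecting any such solution $J_\lambda(v)$ gives a well-defined single-valued map on $L^1(\rd)$ satisfying \eqref{BR21.lemma3.1:1}. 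Positivity of $J_\lambda$ follows by the same sign-type test with $\bar v=0$, $\bar u=0$, and mass conservation $\int J_\lambda(v)\dx=\int v\,\dx$ follows from integrating the equation (the $A_0$ contribution is a divergence); together with positivity these give \eqref{BR21.lemma3.1:5}. The resolvent identity \eqref{lemma.FPKE.A_epsilon:1} is then a purely algebraic manipulation of the defining relation $v = J_\lambda(v)+\lambda A_0(J_\lambda(v))$.

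The main obstacle I expect is the $L^1$-contraction in the presence of the first-order perturbation with only $L^\infty$ drift and $L^2_{\mathrm{loc}}$ divergence: one must choose the sign-approximation and the $\beta$-regularisation compatibly so that Kato's inequality produces a genuinely nonpositive boundary term, and verify that the drift-generated term vanishes in the limit $\delta\downarrow 0$ despite $\nabla\beta(u)$ being only in $L^2_{\mathrm{loc}}$ a priori. Once the contraction is secured, the remaining items reduce to standard monotone-operator and maximum-principle arguments.
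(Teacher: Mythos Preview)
Your proposal follows essentially the same route as the paper (which in turn defers to \cite{barbu2020solutions}): regularise the stationary problem, derive $\varepsilon$-uniform $L^1$-contraction and $L^\infty$ bounds, pass to the limit, and read off positivity, mass conservation, and the resolvent identity. The differences are in the specific regularisation and are worth noting.

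The paper's approximation is more elaborate than yours. It replaces $\beta$ by $\tilde\beta_\varepsilon(r)=\beta_\varepsilon(r)+\varepsilon r$ where $\beta_\varepsilon=\beta\circ(I+\varepsilon\beta)^{-1}$ is the \emph{Yosida} approximation (so $\tilde\beta_\varepsilon$ is globally Lipschitz, not merely strictly monotone), adds a massive zero-order term $\varepsilon\tilde\beta_\varepsilon(u)$ to the operator, truncates $E$ by a cutoff $\eta_\varepsilon$ so that $E_\varepsilon\in L^2(\rd;\rd)$, and sets $b_\varepsilon(r)=(b*\rho_\varepsilon)(r)/(1+\varepsilon|r|)$. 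These choices buy two things you do not obviously have: global Lipschitzness of $\tilde\beta_\varepsilon$ together with the massive term make existence of $u_\varepsilon\in L^1\cap H^1$ on the whole space straightforward (no Schauder loop is needed; one can work directly in $L^2$ with $\tilde\beta_\varepsilon(u_\varepsilon)\in H^2$), and the cutoff on $E$ gives $\divv E_\varepsilon\in L^2+L^\infty$ with the explicit control $(\divv E_\varepsilon)^-\le(\divv E)^-+|E|\mathbbm{1}_{\{|x|\ge\varepsilon^{-1}\}}$, which is what produces the precise constant in the $L^\infty$ estimate. Your mollification of $E$ also preserves $\|(\divv E)^-\|_{L^\infty}$, but it does not place $E_\varepsilon$ in $L^2$, so the $H^1$-level existence on $\rd$ would need another device.

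Two small corrections to your write-up. First, ``Gronwall-type absorption'' is not the right mechanism for the $L^1$-contraction here---there is no time variable, and the contraction must hold with constant exactly $1$; as you correctly say later, the drift contribution must \emph{vanish} as $\delta\downarrow 0$ (this is achieved by testing against an approximation of $\mathrm{sign}(\beta_\varepsilon(u_\varepsilon)-\beta_\varepsilon(\bar u_\varepsilon))$ and using that $r\mapsto b_\varepsilon(r)r$ factors through $\beta_\varepsilon$ via a Lipschitz function). Second, $J_\lambda$ is not ``any'' selection from $(I+\lambda A_0)^{-1}$: it is the specific limit of the $u_\varepsilon$'s, and the contraction is proved for that particular selection; other preimages need not satisfy it.
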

With the results of Lemma \ref{BR21.lemma3.1}, Barbu and R\"ockner defined $D(A)$ as follows:
\begin{align}\label{FPKE.Op.A}
A (v) &:= A_0 (v), v\in D(A),\\
D(A) &:= \{J_\lambda(v): v\in L^1(\rd)\}, \lambda >0.\notag
\end{align}
Note that $D(A)$ is independent of the parameter $\lambda>0$, as can be seen from \eqref{lemma.FPKE.A_epsilon:1}, and is therefore well-defined.
From Lemma \ref{BR21.lemma3.1} the following corollary is immediately deduced.
For the terminology, we refer to Appendix \ref{section.appendix.mildsolutionframework}.
\begin{corollary}[{\cite[Lemma 3.2]{barbu2020solutions}}]Assume that \ref{condition.a.general.l1linfty} and \ref{condition.b.general.l1linfty} hold. Then,
	\begin{enumerate}[label=(\roman*)]
		\item $J_\lambda$ coincides with the inverse $(I+\lambda A)\inv$ of $(I+\lambda A)$.
		\item The operator $A$ is $m$-accretive in $L^1(\rd)$.
	\end{enumerate}
	Moreover, \eqref{BR21.lemma3.1:4} and \eqref{BR21.lemma3.1:5}  hold with $(I+\lambda A)\inv$ instead of $J_\lambda$. 
\end{corollary}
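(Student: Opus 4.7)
The plan is to unravel the definition of $(A, D(A))$ given in \eqref{FPKE.Op.A} and translate the conclusions of Lemma \ref{BR21.lemma3.1} into resolvent language. The key preliminary observation, which the author already records, is that $D(A)$ does not depend on the choice of $\lambda > 0$: the identity \eqref{lemma.FPKE.A_epsilon:1} applied with the roles of $\lambda_1, \lambda_2$ interchanged shows that each element in the range of $J_{\lambda_1}$ also lies in the range of $J_{\lambda_2}$, so both ranges coincide. Hence the definition of $A$ is unambiguous, and below we may fix any convenient $\lambda > 0$.

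For (i), take $v \in L^1(\rd)$ and set $u := J_\lambda(v)$. Then $u \in D(A)$ by construction; moreover, since $A_0$ is single-valued on $D(A_0)$, the containment $J_\lambda(v) \in (I + \lambda A_0)^{-1}(v)$ from Lemma \ref{BR21.lemma3.1} is actually the equation $v = u + \lambda A_0(u) = (I+\lambda A)(u)$. This proves $(I+\lambda A) \circ J_\lambda = \mathrm{id}_{L^1(\rd)}$ and, in particular, $\mathrm{Range}(I+\lambda A) = L^1(\rd)$. Conversely, any $u \in D(A)$ can be written as $u = J_\lambda(w)$ for some $w \in L^1(\rd)$ by the $\lambda$-independence just noted; the forward computation then gives $(I+\lambda A)(u) = w$, so that $J_\lambda \circ (I + \lambda A) = \mathrm{id}_{D(A)}$ and, as a by-product, $I + \lambda A$ is injective on $D(A)$. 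Together these yield $J_\lambda = (I+\lambda A)^{-1}$.

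For (ii), $m$-accretivity of $A$ on $L^1(\rd)$ is the conjunction of the $L^1$-contractivity of $(I+\lambda A)^{-1}$ and the range condition $\mathrm{Range}(I+\lambda A) = L^1(\rd)$ for all $\lambda > 0$; both have already been established in (i), the former via \eqref{BR21.lemma3.1:1} and the latter directly from the computation $(I+\lambda A)(J_\lambda(v)) = v$. The remaining assertions of the corollary are then purely formal: the $L^\infty$ bound \eqref{BR21.lemma3.1:4} and the invariance of probability densities \eqref{BR21.lemma3.1:5} are restated for $(I+\lambda A)^{-1}$ via the identification $J_\lambda = (I+\lambda A)^{-1}$.

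There is essentially no substantive obstacle; the main care is bookkeeping, namely distinguishing the potentially multi-valued relation $(I+\lambda A_0)^{-1}$ from the single-valued inverse carved out by the selection $J_\lambda$ via the definition $D(A) := \mathrm{Range}(J_\lambda)$. Once this distinction is in place, the corollary is a direct rephrasing of Lemma \ref{BR21.lemma3.1} in the language of $m$-accretive operator theory, setting the stage to apply the Crandall--Ligget mild solution framework recapitulated in Appendix \ref{section.appendix.mildsolutionframework}.
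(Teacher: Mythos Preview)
Your proof is correct. The paper itself does not supply a proof of this corollary; it merely records that the statement ``is immediately deduced'' from Lemma \ref{BR21.lemma3.1} and refers to Appendix \ref{section.appendix.mildsolutionframework} for the relevant terminology. What you have written is precisely the routine verification the paper leaves implicit: using that $D(A)=\mathrm{Range}(J_\lambda)$ is $\lambda$-independent via \eqref{lemma.FPKE.A_epsilon:1}, that $A$ coincides with the single-valued $A_0$ on this domain, and that the $L^1$-contractivity \eqref{BR21.lemma3.1:1} is exactly the resolvent formulation of accretivity. There is no alternative argument to compare against.
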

By the Crandall--Ligget theorem (cf. Theorem \ref{appendix.theorem.crandallLigget}), the following theorem is then proved.
It is a special case of \cite[Theorem 2.2]{barbu2020solutions}, which is one of the main results of \cite{barbu2020solutions}.
\begin{theorem}[c.f. {\cite[Theorem 2.2]{barbu2020solutions}}]\label{theorem.FPKE.existence}
	Let $d\neq 2$. Assume that \ref{condition.a.general.l1linfty} and \ref{condition.b.general.l1linfty} hold. Then $\overline{D(A)}^{L^1(\rd)}= L^1(\rd)$ and for every $\nu \in {L^1(\rd)}$ there exists a unique mild solution $u$ to \eqref{FPKE.Op} with $\left.u\right|_{t=0}=\nu$.
	Furthermore, for every $\nu\in {L^1(\rd)} \cap L^\infty(\rd)$
	\begin{align}\label{theorem.FPKE.existence:Linfty}
		\norm[L^\infty(\rd)]{u(t)} \leq \exp\left(\norm[L^\infty(\rd)]{(\divv E)^-+|E|}^\frac{1}{2}t\right)\norm[L^\infty(\rd)]{v} \ \forall t\in [0,\infty).
	\end{align}
	If $\nu \in \PPPP_0(\rd)$, then
	\begin{align}\label{theorem.FPKE.existence:P0}
		u(t) \in \PPPP_0(\rd)\ \forall t\in [0,\infty).
	\end{align}
	Moreover,
	$t\mapsto S(t)\nu=u(t,\cdot)$, where $u$ denotes the mild solution starting in $\nu$, is a strongly continuous semigroup of nonlinear contractions from $L^1(\rd)$ to $L^1(\rd)$, that is $S(t+s)\nu = S(t)S(s)\nu$ for all $0<s<t$, and 
	\begin{align*}
		||S(t)\nu - S(t)\bar{\nu}||_{L^1} \leq ||\nu - \bar{\nu}||_{L^1} \ \forall t>0,\ \nu, \bar{\nu} \in L^1(\rd).
	\end{align*}
	If $\nu \in L^1(\rd) \cap L^\infty(\rd)$, then $u$ is a Schwartz-distributional solution to \eqref{FPKE} up to $T=\infty$ such that $\left.u\right|_{t=0}=\nu$.
\end{theorem}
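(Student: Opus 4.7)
The proof will apply the Crandall--Ligget theorem (Theorem~\ref{appendix.theorem.crandallLigget}), since the preceding corollary has already established that $A$ is $m$-accretive on $L^1(\rd)$. Once I verify the density $\overline{D(A)}^{L^1(\rd)}=L^1(\rd)$, the exponential formula $S(t)\nu:=u(t)=\lim_{n\to\infty}(I+(t/n)A)^{-n}\nu$ yields, for every $\nu\in L^1(\rd)$, a unique mild solution, the strong continuity of $S(t)$, and the $L^1$-contraction property, all as direct consequences of \eqref{BR21.lemma3.1:1} via the Crandall--Ligget estimate. The semigroup identity $S(t+s)\nu=S(t)S(s)\nu$ is a standard output of the same framework.

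For the density $\overline{D(A)}^{L^1(\rd)}=L^1(\rd)$, it is enough to approximate any $v\in L^1(\rd)\cap L^\infty(\rd)$ (an $L^1$-dense set) by elements of $D(A)=\mathrm{Range}(J_\lambda)$, for which the natural strategy is to show $J_\lambda v\to v$ in $L^1(\rd)$ as $\lambda\downarrow 0$. Reading the resolvent equation $v_\lambda+\lambda Av_\lambda=v$ as a quasilinear elliptic problem in $L^1(\rd)$, I would combine the uniform $L^\infty$-bound \eqref{BR21.lemma3.1:4} with a cut-off/mollification argument to pass to the limit; this is the step at which the dimensional restriction $d\neq 2$ enters, because the required global Sobolev/decay estimates for the underlying Laplacian-dominated problem are unavailable in two dimensions (logarithmic fundamental solution).

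Next, I would transfer the resolvent estimates to the mild solution. Iterating \eqref{BR21.lemma3.1:4} with $\lambda=t/n$ gives
\begin{align*}
\|J_\lambda^n v\|_{L^\infty(\rd)}\leq\bigl(1+\lambda\,\|(\divv E)^-+|E|\|_{L^\infty(\rd)}^{1/2}\bigr)^n\|v\|_{L^\infty(\rd)},
\end{align*}
and letting $n\to\infty$ recovers \eqref{theorem.FPKE.existence:Linfty}. Similarly, since each $J_\lambda$ preserves $\PPPP_0(\rd)$ by \eqref{BR21.lemma3.1:5}, so does $J_\lambda^n$, and the $L^1$-limit of probability densities is a probability density, which gives \eqref{theorem.FPKE.existence:P0}.

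The main obstacle is the final claim: that when $\nu\in L^1(\rd)\cap L^\infty(\rd)$ the mild solution $u$ is a Schwartz-distributional solution of \eqref{FPKE}. I would work with the Euler scheme $u^\varepsilon_k:=J_\varepsilon^k\nu$ at times $k\varepsilon$ and test the identity $u^\varepsilon_k-u^\varepsilon_{k-1}+\varepsilon\,Au^\varepsilon_k=0$ against $\varphi\in C_c^\infty([0,T)\times\rd)$, producing a discretised version of \eqref{FPKE.test}. The uniform $L^\infty$-bound from the previous step together with \ref{condition.a.general.l1linfty} and \ref{condition.b.general.l1linfty} ensures that $\beta(u^\varepsilon_k)$ and $E\,b(u^\varepsilon_k)u^\varepsilon_k$ are uniformly in $L^\infty$, hence locally uniformly integrable, while the Crandall--Ligget convergence gives $L^1$-convergence of the piecewise-constant interpolants to $u$; along a subsequence this is pointwise a.e., and continuity of $b$ and $\beta$ then permits passage to the limit in the nonlinear terms. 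Justifying this passage to the limit uniformly in test functions is the technical crux of the argument.
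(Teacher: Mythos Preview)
The paper does not actually prove this theorem: it states it as a special case of \cite[Theorem~2.2]{barbu2020solutions} and only remarks that it follows from the Crandall--Ligget theorem once $m$-accretivity is in place. Your sketch is therefore considerably more detailed than what the paper provides, and it follows precisely the route one expects from the cited reference: density of $D(A)$ via $J_\lambda v\to v$ (which is indeed where the restriction $d\neq 2$ is used in \cite{barbu2020solutions}), Crandall--Ligget for existence, uniqueness, and the semigroup/contraction properties, resolvent iteration for the $L^\infty$ and $\PPPP_0$ preservation, and passage to the limit in the discrete scheme to identify the mild solution as a distributional one.

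One technical point is worth flagging. The resolvent bound \eqref{BR21.lemma3.1:4} as stated in this paper is \emph{uniform} in $\lambda\in(0,\lambda_0)$: the constant is $1+\|(\divv E)^-+|E|\|_{L^\infty}^{1/2}$, with no factor of $\lambda$. Naively iterating it $n$ times therefore yields $(1+C)^n\|v\|_{L^\infty}$, which blows up, not the $(1+\lambda C)^n$ you wrote. The exponential estimate \eqref{theorem.FPKE.existence:Linfty} genuinely requires a $\lambda$-dependent refinement of the resolvent bound (and indeed the paper later invokes the sharper statement \cite[(3.64)]{barbu2020solutions} when it needs the analogous bound for $u_h$). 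So your iteration step is morally right but not justified by \eqref{BR21.lemma3.1:4} alone; you would need to go back to the elliptic estimate itself and extract the correct $\lambda$-dependence.
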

In view of the proof of Theorem \ref{theorem.FPKE.regularity.l1linfty} below, we need to provide more details on how Lemma \ref{BR21.lemma3.1} is proved.

\noindent The proof of Lemma \ref{BR21.lemma3.1} is extensive and is based on the analysis of the operators $(I+\lambda A_\varepsilon)\inv,\lambda>0,\varepsilon>0,$ where $(A_\varepsilon,D(A_\varepsilon))$ is defined as
\begin{align*}
	A_\varepsilon v &:= -\Delta\tilde{\beta}_\varepsilon(v) + \varepsilon \tilde{\beta}_\varepsilon(v) +\divv(E_\varepsilon b_\varepsilon(v)v), v \in D(A_\varepsilon),\\
	D(A_\varepsilon)&:= \{ v \in L^1(\rd):  -\Delta \tilde{\beta}_\varepsilon(v) +\varepsilon \tilde{\beta}_\varepsilon(v) + \divv(E_\varepsilon b_\varepsilon(v)v) \in L^1(\rd)\},
\end{align*}
where $\tilde{\beta}_\varepsilon(r):=\beta_\varepsilon(r)+\varepsilon r, r\in \RR$, and for $\varepsilon>0, r\in \RR$,
\begin{align*}
\beta_\varepsilon(r)&:=\frac{1}{\varepsilon}(r-g_\varepsilon(r))=\beta(g_\varepsilon(r))
\text{, where } g_\varepsilon(r):=(I+\varepsilon\beta)\inv(r),\\
E_\varepsilon&:=   \left\{
\begin{array}{ll}
     E, & \text{if $|E|\in L^2(\rd)$ and $\divv (E) \in L^2(\rd) + L^\infty(\rd)$,}\\
     \eta_\varepsilon E, & \text{else.}
\end{array}\right. \\
b_\varepsilon(r)&:=   \left\{
\begin{array}{ll}
     b, & \text{if $b$ is constant,}\\
     \frac{(b\ast\rho_\varepsilon)(r)}{1+\varepsilon|r|}, & \text{else.}
\end{array} 
\right. 
\end{align*}
Here $\rho_\varepsilon(r):= \varepsilon\inv \rho(\varepsilon\inv r)$, $\rho \in C_c^\infty(\RR)$, $\rho \geq 0$, is a standard mollifier and by $I$ we denote the identity map on $\RR$. Furthermore, $\eta_\varepsilon \in C_c^1(\rd)$, $0\leq \eta_\varepsilon\leq 1, |\nabla \eta_\varepsilon| \leq 1$ and $\eta_\varepsilon(x) =1$ if $|x|\leq \varepsilon\inv$.
Then we have $E_\varepsilon\in L^2(\rd;\rd)\cap L^\infty(\rd;\rd), |E_\varepsilon|\leq |E|, \lim_{\varepsilon\to 0} E_\varepsilon(x) = E(x)$ and 
\begin{align}\label{FPKE.approx.divEeps-.estimate}
	{(\divv E_\varepsilon)^- \leq (\divv E)^-+|E|\mathbbm{1}_{\{|\cdot|\geq\varepsilon\inv\}}}.
\end{align}
Consider the following equation in the Schwartz-distributional sense
\begin{align}\label{A_epsilon.ellipticEq}
	u_\varepsilon + \lambda A_\varepsilon (u_\varepsilon) = f,
\end{align}
for $f \in L^1(\rd)$ and $\lambda >0$.
We have the following lemma on the solvability of \eqref{A_epsilon.ellipticEq} in the case $f\in L^1(\rd)\cap L^2(\rd)$ together with fundamental estimates for a solution $u_\varepsilon$, which is extracted from \cite[Proof of Lemma 3.1]{barbu2020solutions}.
\begin{lemma}\label{lemma.FPKE.A_epsilon}
	Let $f\in L^1(\rd)\cap L^2(\rd)$ and $\lambda >0$. Then \eqref{A_epsilon.ellipticEq} has a solution $u_\varepsilon=u_\varepsilon(\lambda,f) \in L^1(\rd)\cap H^1(\rd)$ in the strong sense in $L^2(\rd)$.
	If $f\geq 0$ a.e. then $u_\varepsilon \geq 0$ a.e. 
	For $\bar{f} \in L^1(\rd)\cap L^2(\rd)$ we have
	\begin{align}\label{lemma.FPKE.A_epsilon:1}
		\norm[L^1(\rd)]{u_\varepsilon(\lambda,f)-u_\varepsilon(\lambda,\bar{f})} \leq \norm[L^1(\rd)]{f-\bar{f}},
	\end{align}
	and $u_\varepsilon(\lambda,0)=0$.
	Furthermore,
	\begin{align}\label{lemma.FPKE.A_epsilon:convergenceJlambda}
		u_\varepsilon(\lambda) \to J_\lambda f \text{ in } L^1(\rd), \text{ as } \varepsilon\to 0.
	\end{align}

	Let $f\in L^1(\rd)\cap L^\infty(\rd)$.
	Then for all $\lambda \in (0,\lambda_0)$, where $\lambda_0>0$ is defined as in \eqref{BR21.lemma3.1:4}, $u_\varepsilon(\lambda,f) \in L^\infty(\rd)$ such that
	\begin{align*}
	\norm[L^\infty(\rd)]{u_\varepsilon(\lambda,f)} \leq C_E\norm[L^\infty(\rd)]{f},
	\end{align*}
	where 
	\begin{align*}
		C_E:=\left(1+ \norm[L^\infty(\rd)]{(\divv E)^- + |E|}^\frac{1}{2}\right).
	\end{align*}
\end{lemma}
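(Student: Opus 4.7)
The plan is to treat \eqref{A_epsilon.ellipticEq} as a regularised nonlinear elliptic equation of the form
\[
u_\varepsilon - \lambda\Delta\tilde\beta_\varepsilon(u_\varepsilon) + \lambda\varepsilon\tilde\beta_\varepsilon(u_\varepsilon) + \lambda\,\divv(E_\varepsilon b_\varepsilon(u_\varepsilon)u_\varepsilon) = f.
\]
Since $\tilde\beta_\varepsilon(r)=\beta_\varepsilon(r)+\varepsilon r$ with $\beta_\varepsilon$ monotone nondecreasing, $\tilde\beta_\varepsilon'\geq\varepsilon$, $\tilde\beta_\varepsilon$ is bi-Lipschitz on $\RR$ with constants depending on $\varepsilon$; moreover $E_\varepsilon\in L^2\cap L^\infty$, $\divv E_\varepsilon\in L^2+L^\infty$ and $b_\varepsilon$ is bounded, $C^1$, with $b_\varepsilon(r)r$ globally Lipschitz in $r$ (by construction of the $(1+\varepsilon|r|)^{-1}$ factor). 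First, I would establish existence and uniqueness in $L^1(\rd)\cap H^1(\rd)$: the principal part $v\mapsto v-\lambda\Delta\tilde\beta_\varepsilon(v)+\lambda\varepsilon\tilde\beta_\varepsilon(v)$ is a maximal monotone operator from $H^1(\rd)$ to $H^{-1}(\rd)$ (after composing with the Riesz isomorphism) and is coercive on $H^1(\rd)$ due to the $\varepsilon\tilde\beta_\varepsilon$-term; the convective term $v\mapsto\lambda\divv(E_\varepsilon b_\varepsilon(v)v)$ is a compact/bounded Lipschitz perturbation (use boundedness of $E_\varepsilon$ in $L^2\cap L^\infty$ together with $|b_\varepsilon(r)r|\leq C/\varepsilon$). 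A Galerkin approximation combined with Minty--Browder, or a direct application of the Leray--Schauder fixed-point theorem to the linearised problem, produces a solution $u_\varepsilon\in H^1(\rd)$; testing against $u_\varepsilon$ and integrating by parts (using $(\divv E_\varepsilon)^-\in L^\infty$) shows that the $H^1$-norm is controlled by $\|f\|_{L^2}$. Membership in $L^1(\rd)$ will come out of the next step.

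\textbf{Positivity and $L^1$-contraction.} I would next prove the $T$-contraction property, which simultaneously yields positivity, uniqueness, the $L^1$-bound, and the identity $u_\varepsilon(\lambda,0)=0$. Subtracting the equations for $u_\varepsilon=u_\varepsilon(\lambda,f)$ and $\bar u_\varepsilon=u_\varepsilon(\lambda,\bar f)$, multiplying by a smooth approximation $\mathrm{sgn}_\delta(u_\varepsilon-\bar u_\varepsilon)$ of the sign function, and sending $\delta\downarrow 0$ via Kato's inequality, the leading term gives a nonnegative contribution because $\tilde\beta_\varepsilon$ is monotone, while the convective difference can be handled by writing $b_\varepsilon(v)v-b_\varepsilon(w)w=:\Phi(v,w)(v-w)$ with $\Phi$ bounded and integrating by parts to absorb it using $(\divv E_\varepsilon)^-\in L^\infty$. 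This yields \eqref{lemma.FPKE.A_epsilon:1}; taking $\bar f=0$ and invoking uniqueness gives $u_\varepsilon(\lambda,0)=0$; and applying \eqref{lemma.FPKE.A_epsilon:1} with $\bar f=0$ gives the $L^1$-bound on $u_\varepsilon$. Positivity follows by testing against $\mathrm{sgn}_\delta(u_\varepsilon^-)$ in the same way, using that $\tilde\beta_\varepsilon(0)=0$ and $b_\varepsilon\cdot(\cdot)$ is odd-sign-preserving near $0$.

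\textbf{$L^\infty$-bound.} For $f\in L^1\cap L^\infty$ and $\lambda\in(0,\lambda_0)$, set $M:=C_E\|f\|_{L^\infty}$. I would test the equation for $u_\varepsilon-M$ against $(u_\varepsilon-M)^+$ (Stampacchia truncation). The $-\Delta\tilde\beta_\varepsilon$ and $\varepsilon\tilde\beta_\varepsilon$ contributions are nonnegative on $\{u_\varepsilon>M\}$; the convective term is handled, as in \cite[Lemma 3.1]{barbu2020solutions}, by bounding $\lambda\|b_\varepsilon\|_\infty\|E_\varepsilon\|_\infty$ and using \eqref{FPKE.approx.divEeps-.estimate} together with the choice of $\lambda_0$, giving $\|(u_\varepsilon-M)^+\|_{L^2}^2\leq 0$; a symmetric argument on $-u_\varepsilon-M$ yields $\|u_\varepsilon\|_{L^\infty}\leq M$.

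\textbf{Convergence to $J_\lambda f$.} Finally, for $f\in L^1\cap L^2$, the family $\{u_\varepsilon(\lambda)\}_{\varepsilon>0}$ is bounded in $L^1\cap L^2$, so standard monotone-operator machinery (the Crandall--Ligget-type approximation result used in \cite{barbu2020solutions}) yields $u_\varepsilon(\lambda)\to J_\lambda f$ in $L^1(\rd)$ as $\varepsilon\to 0$: passing to a subsequence one obtains a limit $u^*$ solving $u^*+\lambda A_0(u^*)=f$ in $\mathcal D'(\rd)$ with $u^*\in L^1(\rd)$, i.e.\ $u^*=J_\lambda f$ by definition of $J_\lambda$, and the full sequence converges by uniqueness. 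The main technical obstacle here will be the passage to the limit in the nonlinear convective term $\divv(E_\varepsilon b_\varepsilon(u_\varepsilon)u_\varepsilon)$, which is overcome via the uniform $L^\infty$-bound together with strong $L^1_{\mathrm{loc}}$-compactness of $\{u_\varepsilon\}$ coming from the uniform $H^1$-estimate and the control on $\tilde\beta_\varepsilon(u_\varepsilon)$.
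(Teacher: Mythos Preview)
The paper does not give its own proof of this lemma; it states that the result is ``extracted from \cite[Proof of Lemma 3.1]{barbu2020solutions}''. Your outline follows exactly the Barbu--R\"ockner strategy: solve the regularised elliptic problem in $H^1$ via monotone operator/fixed-point methods, obtain the $L^1$-contraction and positivity through a Kato/sign-function argument, get the $L^\infty$-bound by Stampacchia truncation, and then pass to the limit $\varepsilon\to 0$. So in spirit you are on target.

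There is, however, one concrete gap in your convergence step. You invoke a ``uniform $H^1$-estimate'' on $u_\varepsilon$ to get strong $L^1_{\mathrm{loc}}$-compactness. Testing \eqref{A_epsilon.ellipticEq} with $u_\varepsilon$ only yields $\varepsilon\|\nabla u_\varepsilon\|_{L^2}^2\leq C$, which blows up as $\varepsilon\to 0$; the object that is uniformly bounded in $H^1$ is $\tilde\beta_\varepsilon(u_\varepsilon)$, not $u_\varepsilon$ itself. In the Barbu--R\"ockner argument one first establishes convergence for $f\in L^1\cap L^\infty$ (where the uniform $L^\infty$-bound on $u_\varepsilon$ and the $H^1$-bound on $\tilde\beta_\varepsilon(u_\varepsilon)$ suffice to pass to the limit in the nonlinearities), and then extends to all $f\in L^1$ by density together with the $\varepsilon$-uniform $L^1$-contraction \eqref{lemma.FPKE.A_epsilon:1}. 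Also note a slight circularity: in \cite{barbu2020solutions} the map $J_\lambda$ is \emph{defined} as this very limit, so ``$u^*=J_\lambda f$ by definition of $J_\lambda$'' is the construction, not an a posteriori identification via uniqueness in $(I+\lambda A_0)^{-1}$.
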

The result and proof of the following lemma can be found in \cite[Proof of Lemma 3.3]{barbu2020solutions}, see also \cite[Lemma 3.2]{barbu2021evolution}.
\begin{lemma}\label{lemma.BR21.Lemma3.3.proof} Let $f\in L^1_{\Phi}(\rd)\cap L^\infty(\rd)$ for some $\Phi \in \mathcal{Q}$. Let $\varepsilon\in (0,1), \lambda \in (0,\lambda_0)$, where $\lambda_0>0$ is as in \eqref{BR21.lemma3.1:4}. Let $u_\varepsilon:= u_\varepsilon(\lambda, f)$ denote the solution to \eqref{A_epsilon.ellipticEq} provided by Lemma \ref{lemma.FPKE.A_epsilon}. Then, we have
	\begin{align}
		||u_\varepsilon||_{L^1_\Phi} 
		&\leq ||f||_{L^1_\Phi} + \lambda\int |\tilde{\beta}_{\varepsilon}(u_\varepsilon)| \Delta \Phi + |b_\varepsilon(u_\varepsilon)u_\varepsilon| |\langle E_\varepsilon,\nabla \Phi\rangle_{\rd}|\dx.
	\end{align}
\end{lemma}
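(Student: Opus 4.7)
The plan is to test the elliptic equation $u_\varepsilon + \lambda A_\varepsilon(u_\varepsilon) = f$ against $\Phi \cdot \mathcal{X}_\delta(u_\varepsilon)$, where $\mathcal{X}_\delta \in C^1(\RR)$ is a smooth, nondecreasing, odd approximation of the sign function with $|\mathcal{X}_\delta|\le 1$ and $\mathcal{X}_\delta(r)\to \mathrm{sign}(r)$ pointwise as $\delta\to 0$. This is admissible as a test function because $u_\varepsilon \in L^1(\rd)\cap H^1(\rd)$ by Lemma \ref{lemma.FPKE.A_epsilon}, $\Phi \in C^2(\rd)$ with $|\nabla \Phi|, \Delta\Phi \in L^\infty(\rd)$, $\tilde{\beta}_\varepsilon(u_\varepsilon) \in H^1(\rd)$, and the product $|u_\varepsilon|\Phi$ is what we want to estimate. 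Expanded, this yields
\begin{align*}
    \int u_\varepsilon \mathcal{X}_\delta(u_\varepsilon)\Phi\dx
    = \int f\mathcal{X}_\delta(u_\varepsilon)\Phi\dx
    &+ \lambda \int \mathcal{X}_\delta(u_\varepsilon)\Phi\,\Delta\tilde{\beta}_\varepsilon(u_\varepsilon)\dx \\
    &- \lambda\varepsilon \int \mathcal{X}_\delta(u_\varepsilon)\tilde{\beta}_\varepsilon(u_\varepsilon)\Phi\dx - \lambda \int \mathcal{X}_\delta(u_\varepsilon)\Phi\,\divv(E_\varepsilon b_\varepsilon(u_\varepsilon)u_\varepsilon)\dx.
\end{align*}

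Next I would handle each term in turn. The $\varepsilon\tilde{\beta}_\varepsilon$-term has a favourable sign (as $\tilde{\beta}_\varepsilon$ and $\mathcal{X}_\delta$ are both nondecreasing with value $0$ at $0$) and can be discarded. For the Laplacian term, integration by parts twice (moving one derivative onto $\mathcal{X}_\delta(u_\varepsilon)\Phi$ and then back) produces a nonpositive diffusive term $-\int \mathcal{X}_\delta'(u_\varepsilon)\tilde{\beta}_\varepsilon'(u_\varepsilon)|\nabla u_\varepsilon|^2 \Phi\dx$ which is dropped, a term $\int \mathcal{X}_\delta(u_\varepsilon)\tilde{\beta}_\varepsilon(u_\varepsilon)\Delta\Phi\dx$ (the targeted contribution), and a remainder $\int \mathcal{X}_\delta'(u_\varepsilon)\tilde{\beta}_\varepsilon(u_\varepsilon)\nabla u_\varepsilon\cdot \nabla\Phi\dx$ which concentrates near $\{u_\varepsilon = 0\}$ and will vanish as $\delta\to 0$. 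For the drift term, one integration by parts gives $\lambda \int \nabla(\mathcal{X}_\delta(u_\varepsilon)\Phi)\cdot E_\varepsilon b_\varepsilon(u_\varepsilon)u_\varepsilon\dx$, whose $\nabla u_\varepsilon$-part can be written, via the chain rule with a primitive $B_\delta(r):=\int_0^r \mathcal{X}_\delta'(s)b_\varepsilon(s)s\ds$, as $\lambda\int \nabla B_\delta(u_\varepsilon)\cdot E_\varepsilon \Phi\dx$; a further integration by parts and the fact that $B_\delta\to 0$ pointwise as $\delta\to 0$ (by dominated convergence, since $\mathcal{X}_\delta'$ concentrates at $0$ and $b_\varepsilon(0)\cdot 0=0$) makes it negligible. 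The surviving piece is $\lambda \int \mathcal{X}_\delta(u_\varepsilon) b_\varepsilon(u_\varepsilon)u_\varepsilon\langle E_\varepsilon,\nabla\Phi\rangle\dx$.

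Passing $\delta\to 0$ via dominated convergence (justified by the $L^\infty$-bounds on $\Phi$, $|\nabla\Phi|$, $\Delta\Phi$, the $L^1$-bound on $u_\varepsilon$, and the fact that $\tilde{\beta}_\varepsilon(u_\varepsilon)$ and $b_\varepsilon(u_\varepsilon)u_\varepsilon$ belong to $L^1(\rd)$ since $\tilde{\beta}_\varepsilon$ grows at most linearly and $b_\varepsilon$ is bounded), and exploiting $\mathrm{sign}(u_\varepsilon)\tilde{\beta}_\varepsilon(u_\varepsilon)=|\tilde{\beta}_\varepsilon(u_\varepsilon)|$ and $\mathrm{sign}(u_\varepsilon)b_\varepsilon(u_\varepsilon)u_\varepsilon=|b_\varepsilon(u_\varepsilon)u_\varepsilon|$ (using $b_\varepsilon\ge 0$ and that $\tilde{\beta}_\varepsilon$ is odd and increasing), one arrives at
\begin{align*}
    \int |u_\varepsilon|\Phi\dx \le \int |f|\Phi\dx + \lambda\int |\tilde{\beta}_\varepsilon(u_\varepsilon)|\Delta\Phi\dx + \lambda\int |b_\varepsilon(u_\varepsilon)u_\varepsilon|\,|\langle E_\varepsilon,\nabla\Phi\rangle|\dx,
\end{align*}
which, after bounding $|\langle E_\varepsilon,\nabla\Phi\rangle|$ trivially (using $|\langle E_\varepsilon,\nabla\Phi\rangle|\le |\langle E_\varepsilon,\nabla\Phi\rangle|$ written as an absolute value in the claim), is exactly the asserted estimate.

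The main technical obstacle is the justification of the vanishing of the two $\mathcal{X}_\delta'$-terms as $\delta\to 0$. In both cases this requires uniform integrability arguments that rely on $u_\varepsilon\in H^1(\rd)$ together with the local smoothness of $\tilde{\beta}_\varepsilon$ and $b_\varepsilon$, combined with the fact that $\mathcal{X}_\delta'(u_\varepsilon)$ is uniformly bounded in $L^\infty$ but concentrates on the level set $\{|u_\varepsilon|\le \delta\}$; the primitive trick $B_\delta$ for the drift piece (which converts a term with $\nabla u_\varepsilon$ into a term where the concentration can be handled by dominated convergence after another integration by parts) is the key technical device. All remaining terms pass to the limit by dominated convergence, using the hypothesis $\Phi\in\mathcal{Q}$ to control the test functions.
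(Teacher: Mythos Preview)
Your approach is the standard one and matches what the cited references do (the paper itself does not prove this lemma but refers to \cite[Proof of Lemma 3.3]{barbu2020solutions} and \cite[Lemma 3.2]{barbu2021evolution}): test against $\mathcal{X}_\delta(u_\varepsilon)\Phi$, integrate by parts, discard the sign-definite pieces, and pass $\delta\to 0$.

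One technical point you gloss over: $\Phi$ is unbounded, so $\mathcal{X}_\delta(u_\varepsilon)\Phi$ is not a priori an admissible test function. For instance, the very term $\int u_\varepsilon\,\mathcal{X}_\delta(u_\varepsilon)\Phi\,\dx$ is only known to be finite once you have the conclusion $u_\varepsilon\in L^1_\Phi$, and pairing $\Phi$ against $\Delta\tilde\beta_\varepsilon(u_\varepsilon)\in L^2(\rd)$ is not directly justified either. The rigorous route (which the paper itself uses elsewhere, e.g.\ via the cutoffs $\kappa_n$ in the proof of Theorem~\ref{theorem.FPKE.regularity.l1linfty}) is to first insert a spatial cutoff $\kappa_n\in C_c^\infty(\rd)$ into the test function, carry out all manipulations with compactly supported integrands, and only then let $n\to\infty$ via monotone convergence on the left-hand side and dominated convergence on the right. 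This cutoff also disposes of your two $\mathcal{X}_\delta'$-remainder terms cleanly: after your primitive trick, the resulting integrals of $O(\delta)$-small functions are now against compactly supported weights (involving $\divv(E_\varepsilon\Phi\kappa_n)$ or $\Delta(\Phi\kappa_n)$), so they vanish without the integrability concerns that arise on all of~$\rd$.

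A minor correction: $\tilde\beta_\varepsilon$ need not be odd under the paper's hypotheses \ref{condition.a.general.l1linfty}, but monotonicity together with $\tilde\beta_\varepsilon(0)=0$ already gives $\mathrm{sign}(r)\tilde\beta_\varepsilon(r)=|\tilde\beta_\varepsilon(r)|$, which is all you use.
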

\begin{remark}\label{lemma.BR21.Lemma3.3.proof:remark}
	It is easy to see that Lemma \ref{lemma.BR21.Lemma3.3.proof} and \eqref{lemma.FPKE.A_epsilon:convergenceJlambda} imply the inequality
	\begin{align*}
		||J_\lambda(f)||_{L^1_\Phi} 
	&\leq ||f||_{L^1_\Phi} + \lambda\int |\beta(J_\lambda(f))| \Delta \Phi + |b(J_\lambda(f))J_\lambda(f)| |\langle E,\nabla \Phi\rangle_{\rd}|\dx.
	\end{align*}
	\end{remark}
The following theorem provides conditions under which it is guaranteed that $\beta^{\frac{1}{2}}(u) \in L^2([0,T];H^1(\rd))$. 
The technique and the estimate \eqref{theorem.FPKE.regularity.l1linfty:regularity.beta12} we obtain, are similar to those in \cite{gess2023inventiones}. As pointed out in Section \ref{section.introduction}, Fehrmann and Gess established an a priori estimate similar to \eqref{theorem.FPKE.regularity.l1linfty:regularity.beta12} on the $d$-dimensional torus $\mathbb{T}^d$ in order to prove the existence and regularity of analytically weak solutions to \eqref{FPKE} including those with drift coefficient $[0,T]\times\rd\times\RR\ni (t,x,r) \mapsto E(t,x)\frac{\beta^\frac{1}{2}(r)}{r}$, $E^i \in L^2([0,T]\times\mathbb{T}^d), i \in \{1,...,d\}$, and a large class of diffusivity functions $\beta$, including those of monomial type, i.e. $\beta(r)=|r|^{m-1}r$ for every $m\geq 1$. Here, we transfer their technique to \eqref{FPKE.Op} within the mild solution framework in $L^1(\rd)$.
\begin{theorem}
\label{theorem.FPKE.regularity.l1linfty}
	Assume that \ref{condition.a.general.l1linfty} and \ref{condition.b.general.l1linfty} hold. 
	Assume that 
	\begin{align}\label{theorem.FPKE.regularity.l1linfty:cond.E}
		E \in L^2(\rd;\rd),\ \divv E \in L^2(\rd)+ L^\infty(\rd),\ \divv E\geq 0;
	\end{align}
	or that
	\begin{align}\label{theorem.FPKE.regularity.l1linfty:cond.local}
		\forall K>0\ \exists C_K >0\ \forall r\in [0,K]: \beta'(r)r \leq C_K \beta(r).
	\end{align}
	Let $\nu \in L^1_\Phi(\rd)\cap L^\infty(\rd)$ for some $\Phi \in \mathcal{Q}$ with $\nu \geq 0$ a.e. and let $u$ denote the Schwartz-distributional solution to \eqref{FPKE} with $\left.u\right|_{t=0} = \nu$ provided by Theorem \ref{theorem.FPKE.existence}. Then, for all $T>0$ there exists $C_T>0$ such that for all $t\in (0,T)$
	\begin{align}\label{theorem.FPKE.regularity.l1linfty:regularity.beta12}
		\int_\rd \Psi(u_t(x))\dx + \int_0^t \int_\rd |\nabla \beta^{\frac{1}{2}}(u_s(x))|^2\dx\ds\leq\int_\rd \Psi(\nu(x))\dx + C_T,
	\end{align}
	where $\Psi(r):= \int_0^r \ln(\beta(s))ds, r\geq 0$. If \eqref{theorem.FPKE.regularity.l1linfty:cond.E} is assumed, then \eqref{theorem.FPKE.regularity.l1linfty:regularity.beta12} is true for $C_T=0$.
\end{theorem}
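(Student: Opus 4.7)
The plan is to establish the estimate first at the level of the resolvent $J_\lambda = (I+\lambda A)\inv$ and then propagate it through the Crandall--Liggett exponential formula $u(t)=\lim_{n\to\infty} J_{t/n}^n\nu$. Formally, testing the resolvent equation $J_\lambda f + \lambda A(J_\lambda f) = f$ with $\Psi'(J_\lambda f) = \ln\beta(J_\lambda f)$ and using the convexity of $\Psi$, the identity $|\nabla\beta(u)|^2/\beta(u) = 4|\nabla\beta^{\frac{1}{2}}(u)|^2$, and two integrations by parts should yield
\begin{align*}
\int_\rd \Psi(J_\lambda f)\dx + 4\lambda\int_\rd |\nabla\beta^{\frac{1}{2}}(J_\lambda f)|^2\dx + \lambda\int_\rd (\divv E)\,\Xi(J_\lambda f)\dx \leq \int_\rd \Psi(f)\dx,
\end{align*}
where $\Xi(r):=\int_0^r b(s)s\,\beta'(s)/\beta(s)\,\ds$.

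To justify this rigorously, I would work with the smooth approximations $u_\varepsilon = u_\varepsilon(\lambda,f)\in H^1(\rd)\cap L^\infty(\rd)$ from Lemma \ref{lemma.FPKE.A_epsilon} and replace the singular $\ln\beta$ by $\Psi_\eta'(r)=\ln(\beta(r)+\eta)$, $\eta>0$, for which the testing is legitimate thanks to $\beta(r)+\eta\geq \eta$ and the $L^\infty$-bound on $u_\varepsilon$. The principal dissipation term then converges by monotone convergence as $\eta\downarrow 0$; the additional $\varepsilon\tilde\beta_\varepsilon$ contribution in $A_\varepsilon$ has definite sign and is dropped, while the mollifications $b_\varepsilon, E_\varepsilon$ pass to the limit by the pointwise convergences recalled before Lemma \ref{lemma.FPKE.A_epsilon}. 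Combining the strong convergence $u_\varepsilon\to J_\lambda f$ in $L^1(\rd)$ from \eqref{lemma.FPKE.A_epsilon:convergenceJlambda} with the $\varepsilon$-uniform $L^\infty$-bound to extract a.e.\ convergence of a subsequence, Fatou for $\Psi$ and weak lower semicontinuity of the Dirichlet term then transfer the inequality to $J_\lambda f$.

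The first-order term is handled purely by sign considerations. Since $b,u_\lambda,\beta'\geq 0$ one has $\Xi(u_\lambda)\geq 0$, so under \eqref{theorem.FPKE.regularity.l1linfty:cond.E} the hypothesis $\divv E\geq 0$ makes the third summand on the left already nonnegative and it may be discarded, giving $C_T=0$. Under \eqref{theorem.FPKE.regularity.l1linfty:cond.local} the growth $\beta'(r)r\leq C_K\beta(r)$ yields $\Xi'(r)\leq \|b\|_\infty C_K$, hence $\Xi(r)\leq C r$ on $[0,K]$; decomposing $\divv E=(\divv E)^+-(\divv E)^-$, the uncontrolled $(\divv E)^+$-part drops by sign, and the remaining contribution is bounded, in view of $(\divv E)^-\in L^\infty(\rd)$ from \ref{condition.b.general.l1linfty}, by $\lambda C\|(\divv E)^-\|_\infty\|f\|_{L^1}$. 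This produces the per-step estimate
\begin{align*}
\int_\rd\Psi(J_\lambda f)\dx + 4\lambda\int_\rd |\nabla\beta^{\frac{1}{2}}(J_\lambda f)|^2\dx \leq \int_\rd\Psi(f)\dx + \lambda C\|f\|_{L^1}.
\end{align*}

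Finally, iterating along $u_k:=J_{t/n}(u_{k-1})$ with $u_0=\nu$, the $L^1$-contraction of $J_{t/n}$ yields $\|u_k\|_{L^1}\leq \|\nu\|_{L^1}$, while the exponential formula together with \eqref{theorem.FPKE.existence:Linfty} gives a $k,n$-uniform $L^\infty$-bound $\|u_k\|_\infty \leq K := e^{CT}\|\nu\|_\infty$; this uniformity is crucial, for it lets us apply \eqref{theorem.FPKE.regularity.l1linfty:cond.local} with a single constant $C_K$ along the whole discrete scheme. Summing the per-step estimate telescopes $\Psi$ and produces the Riemann sum $\frac{t}{n}\sum_k\int|\nabla\beta^{\frac{1}{2}}(u_k)|^2\dx$; passing $n\to\infty$ via $u_n\to u(t)$ in $L^1(\rd)$ (a.e.\ along a subsequence), Fatou for $\Psi$, and weak lower semicontinuity of the Dirichlet term then yields \eqref{theorem.FPKE.regularity.l1linfty:regularity.beta12}. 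The main obstacle I anticipate is the rigorous chain of passages $\eta\downarrow 0$, $\varepsilon\downarrow 0$, $n\to\infty$ while simultaneously preserving the convexity inequality and the lower semicontinuity of the dissipation; the decisive ingredient that makes all three limits manageable is the $L^\infty$-control from the Crandall--Liggett scheme together with the observation that only $(\divv E)^-\in L^\infty$, not a global bound on $\divv E$, is needed to absorb the first-order term.
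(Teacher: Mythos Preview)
Your overall strategy---establish a per-step entropy estimate at the resolvent level by testing with $\ln\beta$ and then iterate through the Crandall--Liggett scheme---is exactly the paper's approach. However, two steps in your execution would fail as written.

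First, you never use the hypothesis $\nu\in L^1_\Phi(\rd)$, yet it is indispensable. The function $\Psi(r)=\int_0^r\ln\beta(s)\,\ds$ changes sign: for small $r$ one has $\Psi(r)\sim c\,r\ln r<0$, so neither $\int\Psi(u_\varepsilon)\,\dx$ nor $\int\Psi(J_\lambda f)\,\dx$ is a priori finite, and your repeated appeal to ``Fatou for $\Psi$'' in the $\varepsilon\to 0$ and $n\to\infty$ limits is illegitimate because Fatou requires a one-sided bound. The paper handles this via $r\ln^- r\leq C_\gamma r^\gamma$ followed by H\"older against the weight, $\int u_\varepsilon^\gamma\,\dx\leq \|u_\varepsilon\|_{L^1_\Phi}^\gamma\|\Phi^{-\alpha}\|_{L^1}^{1-\gamma}$, where $\sup_\varepsilon\|u_\varepsilon\|_{L^1_\Phi}<\infty$ is supplied by Lemma~\ref{lemma.BR21.Lemma3.3.proof}. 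This weighted control is what makes $\Psi(u_\varepsilon)$ genuinely dominated and yields honest $L^1$-convergence of $\int\Psi_\varepsilon(u_\varepsilon)\,\dx$ rather than mere lower semicontinuity.

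Second, your claim that ``the additional $\varepsilon\tilde\beta_\varepsilon$ contribution in $A_\varepsilon$ has definite sign and is dropped'' is wrong for the same reason: the tested term is $\varepsilon\int\tilde\beta_\varepsilon(u_\varepsilon)\,\Psi_\eta'(u_\varepsilon)\,\dx$, and while $\tilde\beta_\varepsilon(u_\varepsilon)\geq 0$, the logarithmic factor is negative wherever $\beta(u_\varepsilon)+\eta<1$. The paper bounds this term from below by $-\varepsilon C_{\Phi,\gamma}$ using the same weighted-$L^1$ machinery, so it disappears only in the limit $\varepsilon\to 0$. Relatedly, the paper tests with (a truncation and spatial localisation of) $\ln\tilde\beta_\varepsilon(u_\varepsilon)$ rather than $\ln(\beta(u_\varepsilon)+\eta)$; this makes the dissipation term come out exactly as $\int|\nabla\tilde\beta_\varepsilon(u_\varepsilon)|^2/\tilde\beta_\varepsilon(u_\varepsilon)\,\dx$, whereas your regularisation would produce the mismatched integrand $\tilde\beta_\varepsilon'(u_\varepsilon)\beta'(u_\varepsilon)/(\beta(u_\varepsilon)+\eta)\,|\nabla u_\varepsilon|^2$, for which the identification with $4|\nabla\beta^{1/2}(u)|^2$ in the limit is not immediate.
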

\begin{proof}
\newcommand{\logn}{l^\eta_{M,n}}
\newcommand{\lognbeta}{l_{M,n}(\cdot,\tilde{\beta}_{\varepsilon}(u_\varepsilon))}
\newcommand{\tildebeta}{\tilde{\beta}_\varepsilon(u_\varepsilon)}
\newcommand{\tildebetaderivative}{\tilde{\beta}'_\varepsilon(u_\varepsilon)}
\newcommand{\uepsiplus}{u_\varepsilon}
\newcommand{\uepsi}{f}

	By Definition \ref{appendix.crandallLigget.mildSolution}, we have for each $T>0$
	\begin{align*}
		u_{}(t)=\lim_{h\to 0} u_{h}(t) \text{ in $L^1(\rd)$, uniformly on $[0,T]$,}
	\end{align*}
	where $u_{h}:[0,T]\to L^1(\rd)$ is a step function, defined by the finite difference scheme
	\begin{align}\label{theorem.FPKE.regularity.l1linfty:A.finite.difference.scheme}
		u_{h}(t)=u_{h}^{i+1}\ \forall t\in (ih,(i+1)h], i=0,1,...,N-1, u_{h}(0)=\nu,\\
		\text{where } u_{h}^{i+1}+hA(u_{h}^{i+1}) = u_{h}^i,\ Nh=T,\notag\\
		\text{and } u_{h}^0=\nu.\notag
	\end{align}
	
	We prove the assertion of this lemma according to the following three steps.
	\begin{enumerate}
		\item We show that for every $f\in L^1_\Phi(\rd) \cap L^\infty(\rd)$ with $f\geq 0$ a.e. and $\lambda \in (0,\lambda_0)$,	the corresponding solution $u_\varepsilon=u_\varepsilon(\lambda,f)$ to  
			\begin{align}\label{theorem.FPKE.regularity.l1linfty:A_epsilon.eq}
				u_{\varepsilon}+\lambda A_\varepsilon(u_{\varepsilon}) = f,
			\end{align}
			provided by Lemma \ref{lemma.FPKE.A_epsilon},
			satisfies for $\Psi_\varepsilon(r):= \int_0^r \ln(\tilde{\beta}_\varepsilon(r))dr, r\geq 0$,
			\begin{align}\label{theorem.FPKE.regularity.l1linfty:regularity.beta12:1.1}
				\int_\rd \Psi_\varepsilon(u_\varepsilon)\dx + \lambda\int_\rd |\nabla \tilde{\beta}_\varepsilon^{\frac{1}{2}}(u_\varepsilon)|^2\dx\leq\int_\rd \Psi_\varepsilon(f)\dx + \lambda C_\varepsilon,
			\end{align}
		 	for a constant $C_\varepsilon\in (0, \infty)$, which is independent of $\lambda \in (0,\lambda_0)$.
		\item We show that for every $f\in L^1_\Phi(\rd) \cap L^\infty(\rd)$ with $f\geq 0$ a.e. and $\lambda \in (0,\lambda_0)$, 
		\begin{align}\label{theorem.FPKE.regularity.l1linfty:regularity.beta12:1.2}
			\int_\rd \Psi(u)\dx + \lambda\int_\rd |\nabla \beta^{\frac{1}{2}}(u)|^2\dx\leq\int_\rd \Psi(f)\dx + \lambda C,
		\end{align} where $u:=J_\lambda(f)$ (cf. Lemma \ref{BR21.lemma3.1}) for a constant $C>0$, which is independent of $\varepsilon$ and $\lambda \in (0,\lambda_0)$.
		\item We conclude \eqref{theorem.FPKE.regularity.l1linfty:regularity.beta12} via the finite difference scheme \eqref{theorem.FPKE.regularity.l1linfty:A.finite.difference.scheme}.
	\end{enumerate}

\noindent\textbf{Regarding 1.}	We use a similar ansatz as in the proof of \cite[Prop 5.6 (arXiv version 3)]{gess2023inventiones} and adapt their techniques to the mild solution framework in $L^1(\rd)$ in our setting.
	Let $\varepsilon \in (0,1)$. By Lemma \ref{lemma.FPKE.A_epsilon} we have that $u_\varepsilon\geq 0$ a.e., $u_\varepsilon \in H^1(\rd)\cap L^1(\rd)\cap L^\infty(\rd)$, $\tilde{\beta}_\varepsilon(u_\varepsilon) \in H^2(\rd)$, and $||u_\varepsilon||_{L^\infty} \leq  C_{E}||f||_{L^\infty}$, where $C_{E}= \left(1+ \norm[L^\infty(\rd)]{(\divv E)^- + |E|}^\frac{1}{2}\right)$.
	Here, note that 
	\begin{align*}
		\divv(E_\varepsilon b_\varepsilon(u_\varepsilon)u_\varepsilon) = \divv(E_\varepsilon)b_\varepsilon(u_\varepsilon)u_\varepsilon+ (b'_\varepsilon(u_\varepsilon) u_\varepsilon+b_\varepsilon(u_\varepsilon))\langle E_\varepsilon, \nabla u_\varepsilon\rangle_\rd \in L^1(\rd)\cap L^2(\rd).
	\end{align*}

	Let $\kappa_{n} \in C_c^\infty(\rd)$ such that $0\leq \kappa_n \leq 1$, $\kappa_n = 1$ on $B_n(0)$ and $||\nabla\kappa_n||_{L^\infty}+ ||\Delta\kappa_n||_{L^\infty} \leq C$ for some constant $C>0$ independent of $n$.
	We define $l_M(r):= ((\ln(r)\vee -M)\wedge M)\mathbbm{1}_{(0,\infty)}(r)-M\mathbbm{1}_{(-\infty,0]}(r)$ and $l_{M,n}(x,r):=\kappa_n(x)l_M(r), (r,x)\in \RR\times\rd$.
Note that 
 $l_M \in \Lip(\RR)$ with derivative $l_M' (r)= \mathbbm{1}_{(\exp{-M},\exp{M})}(r) \frac{1}{r}$ for all $r\notin\{\exp-M, \exp M\}$.
Let us multiply \eqref{theorem.FPKE.regularity.l1linfty:A_epsilon.eq} by $\lognbeta (\in L^1(\rd)\cap L^\infty(\rd)\cap H^1(\rd))$ and integrate over $\rd$. We obtain
\begin{align*}
		\underbrace{\int  \uepsiplus \lognbeta\dx}_{=:\text{I}} + \lambda \underbrace{\int - \Delta \tildebeta \lognbeta\dx}_{=:\text{II}} &+\lambda \varepsilon\underbrace{\int \tildebeta \lognbeta\dx}_{=:\text{III}} \\
		+ \lambda \underbrace{\int \divv(E_\varepsilon b_\varepsilon(\uepsiplus)\uepsiplus) \lognbeta\dx}_{=:\text{IV}}
		&= \underbrace{\int \uepsi \lognbeta\dx}_{=:\text{V}}.
	\end{align*}
	
\noindent \textbf{I}: By Lebesgue's dominated convergence theorem, it is standard to prove that
\begin{align*}
		\lim_{n\to \infty}\int  u_\varepsilon \lognbeta\dx = \int  u_\varepsilon l_M(\tilde{\beta}_{\varepsilon}(\uepsiplus))\dx.
	\end{align*}
	
\noindent \textbf{II}: Using integration by parts, we obtain
	\begin{align*}
		-&\int \Delta \tildebeta \lognbeta\dx
		= \int \langle\nabla \tildebeta, \nabla \lognbeta\rangle_\rd\dx\\
		&= \int |\nabla \tildebeta|^2 l_M'(\tildebeta) \kappa_n\dx
			+ \int \langle \nabla \tildebeta, \nabla \kappa_n\rangle_\rd l_M(\tildebeta)\dx
	\end{align*}
	Due to our choice of the $\kappa_n, n\in \LN$, and the fact that 
	$|\nabla \kappa_n|, \Delta \kappa_n \to 0$ on $\rd$, as $n\to \infty$, we may use integration by parts and Lebesgue's dominated convergence theorem to conclude that
	\begin{align*}
		\lim_{n\to \infty}\int \langle \nabla \tildebeta, \nabla \kappa_n\rangle_\rd l_M(\tildebeta)\dx
		&= \lim_{n\to \infty}-\int \tildebeta \left(l_M(\tildebeta)\Delta\kappa_n +\langle \nabla \kappa_n,\nabla\tildebeta\rangle_\rd l_M'(\tildebeta)\right)\dx \\
		&=0.
	\end{align*}
	Also, via Lebesgue's dominated convergence theorem, we have
	\begin{align*}
		\lim_{n\to \infty}&\int |\nabla \tildebeta|^2l_M'(\tildebeta) \kappa_n\dx 
		= \int |\nabla \tildebeta|^2 {l_M'(}\tildebeta)\dx.
	\end{align*}
	Using the monotone convergence theorem, we may, thus, conclude that
	\begin{align*}
		\lim_{M\to \infty}\lim_{n\to \infty}- &\int \Delta \tildebeta \lognbeta\dx
		= \int_{\{\tildebeta >0\}} \frac{|\nabla \tildebeta|^2}{\tildebeta}\dx.
	\end{align*}
\noindent \textbf{III:}
Similar to \cite[(4.5) and below]{barbu2021evolution} (and the references therein) we use that 
	\begin{align}\label{theorem.FPKE.regularity.l1linfty:estimate.rln-r}
		\forall \gamma \in (0,1)\ \exists C_\gamma>0\ \forall r\in [0,\infty): r\ln^-(r) \leq C_\gamma r^\gamma,
	\end{align}
	and  estimate for all $\gamma \in \left[\frac{\alpha}{\alpha+1},1\right)$\begin{align*}
	\lim_{n\to \infty} &\int \tildebeta\lognbeta\dx 
	= \int \tildebeta l_M(\tildebeta)\dx\\
	&\geq - \int \tildebeta l_M^-(\tildebeta)\dx
	\geq - \int \tildebeta \ln^-(\tildebeta)\dx
	\geq - C_{\gamma}\int \tildebeta^{\gamma}\dx\\
	&\geq -C_{\gamma}  \left(\int |\tildebeta| \Phi\dx\right)^{\gamma} \left(\int \Phi^{-\frac{\gamma}{1-\gamma}}\dx\right)^{1-\gamma}
	\geq -C_{\gamma}  \left(\int |\tildebeta| \Phi\dx\right)^{\gamma} \left(\int \Phi^{-\alpha}\dx\right)^{1-\gamma}\\
	&\geq - C_{\gamma}\left(\sup_{|r|\leq C_{E}||f||_{L^\infty}}\beta'(r) + 1\right)^{\gamma}\left(\sup_{\varepsilon\in (0,1)}||u_\varepsilon||_{L^1_{\Phi}} \right)^{\gamma} ||\Phi^{-\alpha}||_{L^1}^{1-\gamma} \\
	&\geq -C_{\Phi,\gamma},
\end{align*}
where we choose a finite nonnegative constant $C_{\Phi,\gamma}$ independent of $\varepsilon \in (0,1)$ and $\lambda \in (0,\lambda_0)$, which is possible due to Lemma \ref{lemma.BR21.Lemma3.3.proof}. 
In the above calculation, we used Lebesgue's dominated convergence theorem in the first equality and H\"older's inequality in the third line.
\\

\noindent \textbf{IV:} 
We set
$h_\varepsilon(r):=\frac{b_\varepsilon(r)r\tilde{\beta}_\varepsilon'(r)}{\tilde{\beta}_\varepsilon(r)}$, for $r\in (0,\infty)$.
Since $\beta \in C^2(\RR)$, $h_\varepsilon$ can be continuously extended to a function on $[0,\infty)$ by l'Hospital's rule. We denote this extended function also by $h_\varepsilon$.
Furthermore,
by \eqref{theorem.FPKE.regularity.l1linfty:cond.local}, we obtain that, via the mean-value theorem,
for every $K>0$ and $C^1_K:= (C_K (\sup_{r\in [0,K]} \beta'(r)+1))\vee 1$
\begin{align*}
	\beta'(g_\varepsilon(r))r\leq C_K^1\beta(g_\varepsilon(r))\ \ \forall r\in  [0,K].
\end{align*}
Hence, for all $r\in (0,K]$
\begin{align}\label{theorem.FPKE.regularity.l1linfty:lemma.estimate.hEps}
	(0 \leq)\ h_\varepsilon(r) \leq b_\varepsilon(r) \frac{\beta'(g_\varepsilon(r))r +\varepsilon r}{\beta(g_\varepsilon(r)) +\varepsilon r} 
	\leq C_K^1 ||b||_{L^\infty} <\infty.
\end{align}

\noindent We set $H_\varepsilon(r):=\int_0^r h_\varepsilon (s)ds, r\geq 0$. Note that $H_\varepsilon\geq 0$.
Using integration by parts, we obtain
\begin{align*}
	\int \divv(E_\varepsilon b_\varepsilon(u_\varepsilon)u_\varepsilon) \lognbeta\dx
	=-& \int \langle E_\varepsilon,\nabla \tildebeta\rangle_\rd b_\varepsilon(u_\varepsilon)u_\varepsilon l_M'(\tildebeta)\kappa_n\dx \\
	&- \int \langle E_\varepsilon, \nabla \kappa_n\rangle_\rd b_\varepsilon(u_\varepsilon)u_\varepsilon l_M(\tildebeta)\dx.
\end{align*}
By Lebesgue's dominated convergence theorem, 
the second summand on the right hand-side vanishes, as $n \to \infty$. Taking the limit of the first summand on the right hand-side yields with the aid of Lebegue's dominated convergence theorem
\begin{align*}
	\lim_{n\to \infty}&- \int \langle E_\varepsilon,\nabla \tildebeta\rangle_\rd b_\varepsilon(u_\varepsilon)u_\varepsilon l_M'(\tildebeta)\kappa_n\dx\\
	&=- \int \langle E_\varepsilon,\nabla \tildebeta\rangle_\rd b_\varepsilon(u_\varepsilon)u_\varepsilon l_M'(\tildebeta)\dx
	= - \int \langle E_\varepsilon, \nabla u_\varepsilon\rangle_\rd h_\varepsilon(u_\varepsilon)\mathbbm{1}_{(\exp{-M},\exp{M})}(\tildebeta)\dx.
\end{align*}
Using \eqref{theorem.FPKE.regularity.l1linfty:lemma.estimate.hEps} with $K:= C_E||f||_{L^\infty}$, Lebesgue's dominated convergence theorem and \eqref{FPKE.approx.divEeps-.estimate} we conclude that
\begin{align}
- \lim_{M\to \infty}&\int \langle E_\varepsilon, \nabla u_\varepsilon\rangle_\rd h_\varepsilon(u_\varepsilon)\mathbbm{1}_{(\exp{-M},\exp{M})}(\tildebeta)\dx
=- \int \langle E_\varepsilon, \nabla u_\varepsilon\rangle_\rd h_\varepsilon(u_\varepsilon)\dx\label{theorem.MVSDE.PU:ineq.uhEta2.div:1}\\
&= \int (\divv E_\varepsilon) H_\varepsilon(u_\varepsilon)\dx
\geq -\int (\divv E_\varepsilon)^- H_\varepsilon(u_\varepsilon)\dx \label{theorem.MVSDE.PU:ineq.uhEta2.div:2}\\
&\geq - \int \left[(\divv E)^- + |E|\mathbbm{1}_{\{|x| \geq \varepsilon\inv\}}\right]H_\varepsilon(u_\varepsilon)\dx\\
&\geq- C_K^1||(\divv E)^- + |E|||_{L^\infty}
||b||_{L^\infty(\rd)}||f||_{L^1(\rd)} \\
&=: -C_{\divv} > -\infty.
\end{align}
Note that in the first equality we used that $\nabla u_\varepsilon = 0$ a.e. on the set $\{\tildebeta =0 \}= \{u_\varepsilon=0\}$. Assuming \eqref{theorem.FPKE.regularity.l1linfty:cond.E} instead of \eqref{theorem.FPKE.regularity.l1linfty:cond.local}, one can choose $C_{\text{div}}:=0$, which is legitimated by $E=E_\varepsilon$, \eqref{theorem.MVSDE.PU:ineq.uhEta2.div:1}-\eqref{theorem.MVSDE.PU:ineq.uhEta2.div:2}, and the nonnegativity of $H_\varepsilon(u_\varepsilon)$. \\

\noindent \textbf{V}:	
Similar to the argument in I, we conclude
\begin{align*}
		\lim_{n\to \infty} \int  \uepsi \lognbeta\dx = \int  \uepsi l_M(\tilde{\beta}_{\varepsilon}(\uepsiplus))\dx.
	\end{align*}
\vspace{2em}	
\\
	\indent Now we prove \eqref{theorem.FPKE.regularity.l1linfty:regularity.beta12:1.1}.
	Since $l_M(\tilde{\beta}_\varepsilon)$ is non-decreasing, we have for $\Psi^M_\varepsilon(r):= \int_0^r l_M(\tilde{\beta}_{\varepsilon}(s))ds, r\geq 0$,
	\begin{align*}
		\Psi^M_\varepsilon(r_1) - \Psi^M_\varepsilon(r_2) \leq \l_M(\tilde{\beta}_\varepsilon(r_1)) (r_1-r_2) \ \forall r_1,r_2 \in \RR.
	\end{align*}
Hence, combining the limits and estimates we obtained for \textbf{I} -\textbf{V}, we obtain for $M\to \infty$ (at least formally up to this point)
\begin{align}\label{theorem.FPKE.regularity.l1linfty:theorem.FPKE.regularity.l1linfty:inequality.epsilon.M}
	\liminf_{M\to \infty}\int \Psi^M_\varepsilon(\uepsiplus)\dx \ +&\ \lambda \int_{\{\tildebeta >0\}} \frac{|\nabla \tildebeta|^2}{\tildebeta}
	dx - \lambda(\varepsilon C_{\Phi,\gamma}+C_{\divv})\notag\\
	&\leq \liminf_{M\to \infty}\int \Psi^M_\varepsilon(\uepsi)\dx.
\end{align}

\noindent Now, let us verify that the limites inferiores in \eqref{theorem.FPKE.regularity.l1linfty:theorem.FPKE.regularity.l1linfty:inequality.epsilon.M} are actually real limits (which a-posteori makes \eqref{theorem.FPKE.regularity.l1linfty:theorem.FPKE.regularity.l1linfty:inequality.epsilon.M} a rigorous statement).
Note that
\begin{align}\label{psiMeps.1}
	|l_M(r)| 
	= (\ln^+(r)\wedge M) + (\ln^-(r)\wedge M) \ \ \forall r\in [0,\infty).
\end{align}
Hence, $|\Psi_\varepsilon^M| \leq \int_0^\cdot|\ln(\tilde{\beta}_\varepsilon(r))|dr$ on $[0,\infty)$. 
Let us show that $\int_0^\cdot|\ln(\tilde{\beta}_\varepsilon(r))|dr \in L^1(\rd)$; it will be even shown that $(\int_0^\cdot|\ln(\tilde{\beta}_\varepsilon(r))|dr)_{\varepsilon \in (0,1)}$ is bounded in $L^1(\rd)$.
Since $\ln^+$ is Lipschitz continuous with $\ln^+(0)=0$ and Lipschitz constant $1$, we may estimate
\begin{align}\label{theorem.FPKE.regularity.l1linfty:psiMeps.2}
	\int_0^{u_\varepsilon(x)}\ln^+(\tilde{\beta}_\varepsilon(r)) dr \leq \tildebeta u_\varepsilon(x).
\end{align}
Hence, recalling that $||u_\varepsilon||_{L^1} \leq ||f||_{L^1}, ||u_\varepsilon||_{L^\infty} \leq C_{E}||f||_{L^\infty}$, we estimate
\begin{align*}
	\int_\rd \int_0^{u_\varepsilon(x)}\ln^+(\tilde{\beta}_\varepsilon(r)) \dr\dx 
	\leq \left(\sup_{|r|\leq C_{E}||f||_{L^\infty}} \beta'(r) +1\right) C_{E}||f||_{L^\infty(\rd)}||f||_{L^1(\rd)}<\infty,
\end{align*}
Moreover, by the transformation rule for the Lebesgue integral, we obtain
\begin{align}\label{theorem.FPKE.regularity.l1linfty:psiMeps.3}
	\int_0^{u_\varepsilon(x)} \ln^-(\tilde{\beta}_\varepsilon(r)) \dr
	\leq \int_0^{u_\varepsilon(x)}\ln^-(\beta(g_\varepsilon(r)))\dr
	= \int_0^{g_\varepsilon(u_\varepsilon(x))}\ln^-(\beta(r)) (1+\varepsilon\beta'(r))\dr.
\end{align}
Note that \ref{condition.a.general} implies that $\ln^-(\beta(r)) \leq\ln^-\left(\frac{a_1}{m_1}\right)+ m_1 \ln^-(r)$ for all $r> 0$. Similar to the estimates we obtained for \textbf{III}, noting that $g_\varepsilon(r) \leq r\ \forall r\geq 0$, we find that for all $\gamma \in \left[ \frac{\alpha}{\alpha+1},1\right)$ and $C_{\beta'}:= \left(\sup_{|r|\leq C_{E}||f||_{L^\infty}}\beta'(r) +1\right)$
\begin{align}\label{theorem.FPKE.regularity.l1linfty:psiMeps.4}
	\int_\rd &\int_0^{g_\varepsilon(u_\varepsilon(x))}\ln^-(\beta(r)) (1+\varepsilon\beta'(r))\dr\dx \notag\\
	&\leq C_{\beta'}\left[m_1\int_\rd \int_0^{u_\varepsilon(x)}\ln^-(r)drdx+\ln^-\left(\frac{a_1}{m_1}\right)||u_\varepsilon||_{L^1}\right]\notag \\
	&= C_{\beta'}\left[m_1\int_\rd u_\varepsilon(x)\ln^-(u_\varepsilon(x))+ (u_\varepsilon(x)\wedge 1) \dx + \ln^-\left(\frac{a_1}{m_1}\right)||u_\varepsilon||_{L^1}\right]\notag\\
	&\leq C_{\beta'} 
		\left[m_1C_{\gamma} ||u_\varepsilon||_{L^1_\Phi}^{\gamma}||\Phi^{-\alpha}||_{L^1}^{1-\gamma} + \left(\ln^-\left(\frac{a_1}{m_1}\right)+m_1\right)||u_\varepsilon||_{L^1}\right]\notag\\
	&\leq C_{\beta'} 
		\left[m_1C_{\gamma} \sup_{\varepsilon\in (0,1)}||u_\varepsilon||_{L^1_\Phi}^{\gamma}||\Phi^{-\alpha}||_{L^1}^{1-\gamma} + \left(\ln^-\left(\frac{a_1}{m_1}\right)+m_1\right)||f||_{L^1}\right]<\infty.
\end{align}
Since $\Psi_\varepsilon^M(r) \to \Psi_\varepsilon(r)$ for all $r \in [0,\infty)$, we therefore conclude that, by Lebesgue's dominated convergence theorem,
\begin{align*}
	\Psi_\varepsilon^M(u_\varepsilon) \to \Psi_\varepsilon (u_\varepsilon) \text{ in } L^1(\rd), \text{ as } {M\to \infty},
\end{align*}
An analogous argument yields 
\begin{align*}
	\Psi_\varepsilon^M(f) \to \Psi_\varepsilon (f) \text{ in } L^1(\rd), \text{ as } {M\to \infty}.
\end{align*}
Consequently, \eqref{theorem.FPKE.regularity.l1linfty:theorem.FPKE.regularity.l1linfty:inequality.epsilon.M} is equivalent to 
\begin{align}\label{theorem.FPKE.regularity.l1linfty:inequality.epsilon}
	\int \Psi_\varepsilon(\uepsiplus) \dx \ +&\ \lambda \int_{\{\tildebeta >0\}} \frac{|\nabla \tildebeta|^2}{\tildebeta}\dx - \lambda(\varepsilon C_{\Phi,\gamma}+C_{\divv})\notag\\
	&\leq \int \Psi_\varepsilon(\uepsi) \dx.
\end{align}

Let $\varphi \in C_c^\infty(\rd)$ and $\delta>0$ be arbitrary but fixed. Then, integrating by parts, we obtain
\begin{align}\label{theorem.FPKE.regularity.l1linfty:beta12weakDerivative}
	\int (\tildebeta + \delta)^\frac{1}{2}\nabla \varphi\dx
	= - \int \frac{\nabla \tildebeta}{2(\tildebeta + \delta)^\frac{1}{2}}\varphi\dx
	= - \int_{\{\tildebeta >0\}} \frac{\nabla \tildebeta}{2(\tildebeta + \delta)^\frac{1}{2}}\varphi\dx.
\end{align}
where the last equality follows from the fact that $\nabla \tildebeta = 0$ a.e. on $\{\tildebeta =0\}$.
By Lebesgue's dominated convergence theorem, we may take the limit $\delta \to 0$ in \eqref{theorem.FPKE.regularity.l1linfty:beta12weakDerivative} and, hence, obtain from \eqref{theorem.FPKE.regularity.l1linfty:inequality.epsilon} that $\tilde{\beta}_\varepsilon^\frac{1}{2}(u_\varepsilon) \in H^1(\rd)$ with 
$$\nabla \tilde{\beta}_\varepsilon^\frac{1}{2}(u_\varepsilon) = \mathbbm{1}_{\{\tildebeta >0\}} (2\tilde{\beta}_\varepsilon^\frac{1}{2}(u_\varepsilon))\inv\nabla \tildebeta.$$
Consequently, \eqref{theorem.FPKE.regularity.l1linfty:regularity.beta12:1.1} becomes evident.\\

\noindent\textbf{Regarding 2.} 
Let $f \in L^1_\Phi(\rd)\cap L^\infty(\rd)$ such that $f\geq 0$ a.e. Then, by Lemma \ref{BR21.lemma3.1}, 
 $u_\varepsilon(\lambda,f) \to J_\lambda (f)=:u$ in $L^1(\rd)$ and therefore also pointwise almost everywhere for possibly a subsequence, which is not denoted differently.
By the proof of Step 1, 
we have that $(\nabla\tilde{\beta}_{\varepsilon}^{\frac{1}{2}}(u_\varepsilon))_{\varepsilon \in (0,1)}$ is bounded in $L^2(\rd)$, 
as each of the families $\left(\int \Psi_\varepsilon(u_\varepsilon)\dx\right)_{\varepsilon\in (0,1)}, \left(\int \Psi_\varepsilon(f)\dx\right)_{\varepsilon\in (0,1)}$ is bounded.
Note that $(\tilde{\beta}_\varepsilon)_{\varepsilon\in (0,1)}$ is locally equicontinuous, whence $\tildebeta \to \beta(u)$ a.e. Since, for all $\varepsilon\in (0,1)$, $\tildebeta\leq \left(1+\sup_{|r|\leq C_{E}||f||_{L^\infty}}\beta'(r)\right)f\ (\in L^1(\rd)\cap L^\infty(\rd))$, $\tilde{\beta}_\varepsilon(u_\varepsilon) \to \beta(u)$ in $L^p(\rd)$, as $\varepsilon\to 0$, for all $p\geq 1$.
Hence, via the Banach--Alaoglu theorem, it is easy to see that there exists a (non-relabled) subsubsequence such that
\begin{align}\label{theorem.FPKE.regularity.l1linfty:beta12.deriv.weakConvergence}
	\nabla\tilde{\beta}_{\varepsilon}^{\frac{1}{2}}(u_\varepsilon) \to \nabla\beta^{\frac{1}{2}}(u) \text{ weakly in } L^2(\rd), \text{ as } \varepsilon \to 0.
\end{align}

Note that for all $\gamma \in \left(\frac{\alpha}{\alpha + 1},1\right)$
\begin{align*}
	\int u_\varepsilon^\gamma\dx \to \int u^\gamma\dx, \text{ as } \varepsilon\to 0.
\end{align*}
Indeed, for all $\rho \in (0,1)$ with $\gamma \in \left[\frac{\alpha}{\alpha + (1-\rho)},1\right)$ we have
\begin{align*}
	\left|\int u_\varepsilon^\gamma - u^\gamma\dx\right| \leq ||u_\varepsilon-u||_{L^1}^{\gamma \rho} ||u_\varepsilon - u||_{L^1_{\Phi}}^{\gamma(1-\rho)}||\Phi^{-\alpha}||_{L^1}^{1-\gamma}.
\end{align*}
Since by \eqref{lemma.FPKE.A_epsilon:convergenceJlambda} and Lemma \ref{lemma.BR21.Lemma3.3.proof}, $||u||_{L^1_{\Phi}}+\sup_{\varepsilon\in (0,1)}||u_\varepsilon||_{L^1_\Phi} <\infty$ and $u_\varepsilon \to u$ in $L^1(\rd)$, the assertion is proved.
Furthermore, by the generalised Lebesgue's dominated convergence theorem, it is straight-forward to see that $\Psi_\varepsilon(u_\varepsilon)\to \Psi(u)$ and $\Psi_\varepsilon(f)\to \Psi(f)$ a.e.
Hence, using \eqref{theorem.FPKE.regularity.l1linfty:estimate.rln-r} and that for
all $\gamma \in (0,1)$
 \begin{align*}
 	|\Psi_\varepsilon(u_\varepsilon)| \leq \left(\sup_{|r|\leq C_E||f||_{L^\infty}}\beta'(r) +1\right) u_\varepsilon^2 + \ln^-\left(\frac{a_1}{m}\right)u_\varepsilon + m\left(\sup_{|r|\leq C_E||f||_{L^\infty}}\beta'(r) +1\right)\left[C_{\gamma}u_\varepsilon^\gamma + u_\varepsilon\right],
 \end{align*}
 we conclude, via the generalised Lebesgue's dominated convergence theorem, that
 \begin{align}\label{theorem.FPKE.regularity.l1linfty:psiEps(ueps)ToPsi(u)}
 	\int \Psi_\varepsilon(u_\varepsilon)\dx\to \int \Psi(u)\dx, \text{ as } \varepsilon\to 0.
 \end{align}
Similarly,  
  \begin{align*}
 	\int \Psi_\varepsilon(f)\dx\to \int \Psi(f)\dx, \text{ as } \varepsilon\to 0.
 \end{align*}
 
Hence, taking $\liminf_{\varepsilon\to 0}$ on both sides in \eqref{theorem.FPKE.regularity.l1linfty:inequality.epsilon}, we obtain
\begin{align}\label{theorem.FPKE.regularity.l1linfty:inequality.final}
	\int \Psi(u)\dx \ + 4\lambda \int |\nabla \tilde{\beta}_{}^{\frac{1}{2}}(u)|^2dx - \lambda C_{\divv}
	\leq \int \Psi(f)\dx.
\end{align}
This finishes the proof for \eqref{theorem.FPKE.regularity.l1linfty:regularity.beta12:1.2}.

\noindent \textbf{Regarding 3.}
Now let us consider the finite difference scheme \eqref{theorem.FPKE.regularity.l1linfty:A.finite.difference.scheme}. 
Due to \eqref{theorem.FPKE.regularity.l1linfty:inequality.final}, for all $h \in (0,\gamma_0)$ and $i\in \{0,...,N-1\}$, we obtain the estimate
\begin{align}\label{theorem.FPKE.regularity.l1linfty:inequality.final.differenceScheme}
	\int \Psi(u_{h}^{i+1})\dx \ +&\ 4h \int |\nabla\beta^{\frac{1}{2}}(u_{h}^{i+1})|^2
	\dx - hC_{\divv}
	\leq \int \Psi (u_{h}^{i})\dx.
\end{align}
Let $t\in (0,T)$ and note that $t \leq \left[\frac{Nt}{T}\right]h +h<T$.
Now, summing from $0$ to $k-1:= \left[\frac{Nt}{T}\right]$, we obtain
\begin{align*}
	\int \Psi(u_{h}^{k})\dx \ +&\ 4h\sum_{i=0}^{k-1} \int |\nabla\beta^{\frac{1}{2}}(u_{h}^{i+1})|^2\dx 
	- TC_{\divv}
	\leq \int \Psi (\nu)\dx.
\end{align*}
Using \eqref{theorem.FPKE.regularity.l1linfty:A.finite.difference.scheme}, we may conclude that
\begin{align}\label{theorem.FPKE.regularity.l1linfty:inequality.final.differenceScheme.u_h(t)}
	\int \Psi(u_{h}(t))\dx \ +&\ 4\int_0^{\left[\frac{Nt}{T}\right]h +h} \int 
	|\nabla \beta^{\frac{1}{2}}(u_{h}(s))|^2\dx\ds 
	- TC_{\divv}
	\leq \int \Psi(\nu)\dx.
\end{align}
By \cite[(3.64) and below]{barbu2020solutions}, there exists $h_0 \in (0,\gamma_0)$ such that for all $0<h<h_0$
\begin{align}\label{theorem.FPKE.existence:Linfty:u_h}
		\norm[L^\infty]{u_h(t)} \leq \exp\left(\norm[L^\infty]{(\divv E)^-+|E|}^\frac{1}{2}t\right)\norm[L^\infty]{\nu}+1 \ \forall t\in [0,\infty).
	\end{align}
Let us set $C_{\nu}:=\exp\left(\norm[L^\infty]{(\divv E)^-+|E|}^\frac{1}{2}T\right)\norm[L^\infty]{\nu}+1$.

Furthermore, by Remark \ref{lemma.BR21.Lemma3.3.proof:remark}, \cite[below (3.64)]{barbu2020solutions} and \eqref{theorem.FPKE.regularity.l1linfty:A.finite.difference.scheme},
 $$\sup_{t\in (0,T), h \in (0,h_0)}||u_h(t)||_{L^1_{\Phi}}
 \leq ||\nu||_{L^1_\Phi} + T\left[\left(\sup_{|r|\leq C_{\nu}} \beta'(r)\right) ||\Delta \Phi||_{L^\infty} + ||E||_{L^\infty}||b||_{L^\infty}||\nabla\Phi||_{L^\infty}\right]||\nu||_{L^1}.$$
  Therefore, a similar reasoning as for \eqref{theorem.FPKE.regularity.l1linfty:psiEps(ueps)ToPsi(u)} yields that (for a non-relabled subsequence)
\begin{align*}
	\int\Psi(u_h(t))\dx \to \int\Psi(u(t))\dx, \text{ as } h \to 0,
\end{align*}
where the convergence holds in $\RR$.

Hence,
$(\nabla \beta^{\frac{1}{2}}(u_h))_{h \in (0,h_0)}$ is bounded in $L^2((0,t)\times\rd)$ for all $t\in (0,T)$. It is easy to check that, for all $p\geq 2$, $\beta^\frac{1}{2}(u_h) \to \beta^{\frac{1}{2}}(u)$ in $L^p((0,T)\times\rd)$ for a subsequence $h\to 0$. Thus, it is straight-forward to see that via the Banach--Alaoglu theorem, for a non-relabled subsubsequence, 
\begin{align*}
	\nabla\beta^{\frac{1}{2}}(u_h) \to \nabla\beta^{\frac{1}{2}}(u) \text{ weakly in } L^2((0,t)\times\rd), \text{ as } h\to 0.
\end{align*}
Hence, taking $\liminf_{h\to 0}$ in \eqref{theorem.FPKE.regularity.l1linfty:inequality.final.differenceScheme.u_h(t)}, we obtain for all $t\in (0,T)$
\begin{align*}
	\int \Psi(u(t))\dx \ +
	 4\int_0^{t} \int 
	 |\nabla \beta^{\frac{1}{2}}(u_{}(s))|^2\dx\ds 
	 - TC_{\divv}
	 \leq \int \Psi (\nu) \dx.
\end{align*}
This completes the proof.
\end{proof}
In order to prove Theorem \eqref{theorem.mainresult.1}, we will use Theorem \ref{theorem.restrictedYamadaWatanabe} and show (i) in the latter's formulation.
This will be done in the subsequent two subsections.
\subsection{Ingredient 1: A $P^{(u_t)}_\nu$-weak solution to \eqref{MVSDE}}\label{subsection.theorem.mainresult.1.proof:ingredient1}
	
The following result is part of \cite[Theorem 6.1 (a)]{barbu2020solutions}, which is based on the superposition principle procedure from \cite[Section 2]{barbu2019nonlinear}.
\begin{theorem}[$P^{(u_t)}_\nu$-weak solution]\label{theorem.MVSDE.existence}
	Assume that \ref{condition.a.general.l1linfty} and \ref{condition.b.general.l1linfty} hold. Let $\nu \in \PPPP_0(\rd)\cap L^\infty(\rd)$. Then for each $T>0$, there exists a $P^{(u_t)}_\nu$-weak solution $(X,W)$ to \eqref{MVSDE} up to time $T$,
	where $u$ is the probability solution to \eqref{FPKE} with $\left.u\right|_{t=0}=\nu$ provided by Theorem \ref{theorem.FPKE.existence}.
\end{theorem}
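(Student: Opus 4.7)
The plan is to invoke the superposition principle machinery of Trevisan--Bogachev--R\"ockner, using the probability solution $u$ from Theorem \ref{theorem.FPKE.existence} to linearize the problem.

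First, I would apply Theorem \ref{theorem.FPKE.existence} to obtain the probability solution $u$ to \eqref{FPKE} with $\left.u\right|_{t=0}=\nu$. Because $\nu\in \PPPP_0(\rd)\cap L^\infty(\rd)$, the theorem furnishes $u(t,\cdot)\in\PPPP_0(\rd)$ for all $t\in[0,T]$ together with a uniform bound
\begin{align*}
	\|u(t,\cdot)\|_{L^\infty(\rd)} \leq \exp\!\bigl(\|(\divv E)^{-}+|E|\|_{L^\infty}^{1/2}\,T\bigr)\|\nu\|_{L^\infty(\rd)} =: M_T,
\end{align*}
which will be the key a priori input for everything that follows.

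Second, I would freeze $u$ in the coefficients and introduce the time-dependent linear data
\begin{align*}
	\tilde{b}(t,x) := E(x)\,b(u(t,x)), \qquad \tilde{a}(t,x) := \frac{\beta(u(t,x))}{u(t,x)}\,\mathbbm{1}_{d\times d},
\end{align*}
with the convention $\beta(0)/0 := \beta'(0)$. By \ref{condition.a.general.l1linfty}, \ref{condition.b.general.l1linfty} and the $L^\infty$-bound on $u$, both $\tilde b$ and $\tilde a$ are bounded Borel functions on $[0,T]\times\rd$. A direct rewriting of \eqref{FPKE.test} shows that the probability-measure-valued curve $\mu_t := u(t,x)\dx$ is a Schwartz-distributional solution of the \emph{linear} Fokker--Planck equation
\begin{align*}
	\partial_t \mu_t + \divv(\tilde{b}(t,\cdot)\,\mu_t) - \sum_{i=1}^{d}\partial_i^2\bigl(\tilde{a}_{ii}(t,\cdot)\,\mu_t\bigr) = 0, \qquad \mu_0 = \nu,
\end{align*}
in the sense of \cite{trevisan_super, bogachev2021super}, since the identity $\tilde a_{ii}(t,x)\,u(t,x) = \beta(u(t,x))$ recovers the diffusion term of \eqref{FPKE} and analogously for the drift.

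Third, because $\tilde b$ and $\tilde a$ are bounded, the integrability hypotheses of Trevisan's superposition principle are trivially satisfied. Hence there exists a Borel probability measure $Q$ on $C([0,T];\rd)$ with one-dimensional marginals $Q\circ \pi_t^{-1} = \mu_t$ under which the canonical process is a martingale solution to
\begin{align*}
	dX(t) = \tilde{b}(t,X(t))\dt + \sqrt{2\,\tilde{a}(t,X(t))}\,dW(t), \qquad X(0)\sim\nu.
\end{align*}
Realizing this martingale solution on a suitable stochastic basis in the standard way (via enlargement by an auxiliary Brownian motion, cf.\ the construction in \cite{trevisan_super}) yields a probabilistically weak solution $(X,W,(\Omega,\FF,\PP;(\FF_t)))$.

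Finally, because $\law{X(t)} = \mu_t = u(t,\cdot)\dx$ for every $t\in[0,T]$, the density $\frac{\mathrm d\law{X(t)}}{\dx}(x)$ equals $u(t,x)$ at Lebesgue-a.e.\ $x$, and evaluation at $X(t)$ turns $\tilde b(t,X(t))$ and $\sqrt{2\tilde a(t,X(t))}$ back into the original Nemytskii-type coefficients of \eqref{MVSDE}; measurability of the $v_a$-prescription in the \emph{Notation} section ensures this is well-defined on all of $\PPPP(\rd)$, not just on the absolutely continuous subset. Thus $(X,W)$ is the desired $P^{(u_t)}_\nu$-weak solution. The only non-routine point is checking the integrability prerequisites of the superposition principle, and these are handled wholesale by the $L^\infty$-bound $M_T$ combined with boundedness of $E$ and $b$.
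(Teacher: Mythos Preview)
Your proposal is correct and follows exactly the route the paper indicates: the paper does not give its own proof but cites \cite[Theorem 6.1 (a)]{barbu2020solutions}, noting explicitly that it ``is based on the superposition principle procedure from \cite[Section 2]{barbu2019nonlinear}''. Your reconstruction---solve \eqref{FPKE} via Theorem \ref{theorem.FPKE.existence}, freeze $u$ to obtain a linear FPE with bounded coefficients (using the $L^\infty$-bound \eqref{theorem.FPKE.existence:Linfty} and $\beta\in C^2$, $\beta(0)=0$ to control $\beta(r)/r$), apply Trevisan's superposition principle, and identify the resulting weak solution with \eqref{MVSDE} via the a.e.\ agreement of $u_t$ and the Besicovitch representative $v_a$---is precisely that procedure.
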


\subsection{Ingredient 2: $P^{(u_t)}_\nu$-pathwise uniqueness for \eqref{MVSDE}}\label{subsection.theorem.mainresult.1.proof:ingredient2}
Suppose $u$ is a probability solution to \eqref{FPKE} with initial condition $\left.u\right|_{t=0}=\nu \in \PPPP(\rd)$.
Recall that, by definition, $P^{(u_t)}_\nu$-weak solutions to \eqref{MVSDE} all have the same time marginal laws. Equivalently, they are $P^{(u_t)}_\nu$-weak solutions to the following \textit{ordinary} SDE
\begin{align}\label{MVSDE.fixed.u}\tag{$\text{SDE}_u$}
	dX(t)&=\bm{b}^u(t,X(t))\ \mathrm dt + \bm{\sigma}^u(t,X(t))\ \mathrm dW(t),\ \ t\in [0,T],\\
	\law{X(0)} &= \nu,\notag
\end{align}
where $\bm{b}^u(t,x):=E(x)b(u_t(x))$ and $\bm{\sigma}^u(t,x):=\sqrt{2\frac{\beta(u_t(x))}{u_t(x)}}\mathbbm{1}_{d\times d}$, $(t,x)\in [0,T]\times\rd$.
As already indicated in the introduction, we will fix a precise representative of the $u_t$, $t\in [0,T]$, in the coefficients of \eqref{MVSDE} such that $\bm{b}^u$, $\bm{\sigma}^u$ are Borel measurable. In particular, $u$ is pointwise defined on $[0,T]\times \rd$.
In the following theorem, we are going to prove $P^{(u_t)}_\nu$-pathwise uniqueness for \eqref{MVSDE.fixed.u}.
The proof is based on a combination of a proper modification of a pathwise uniqueness result for SDEs performed in the proof of \cite[Theorem 1.1]{roeckner2010weakuniqueness}, Theorem \ref{theorem.MVSDE.PU}, and a trick involving the mean-value theorem, which is motivated by \cite[(76)-(77)]{gess2023inventiones}.

\begin{theorem}[$P^{(u_t)}_\nu$-pathwise uniqueness]\label{theorem.MVSDE.PU}
	Assume that all the conditions in Theorem \ref{theorem.FPKE.regularity.l1linfty} are fulfilled. Furthermore, let $m \in (1,\infty), \zeta \in [0,1]$ such that $\frac{2\zeta}{m}<1$ and assume that
	\begin{align}
		\label{theorem.MVSDE.PU:ii} \forall K>0 \ \exists C_K,a_K\in (0,\infty)\ \forall r\in [0,K]: \beta(r) \leq C_Kr^{m} \text{ and }a_K|r|^{m-1}\leq \beta'(r),
	\end{align}
	that for all $R>0$ there exits $\iota_R \in L^1(B_R(0))$ such that for a.e. $x, y \in B_R(0)$
	\begin{align}
		\langle E(x)-E(y),x-y\rangle_{\rd} \leq (\iota_R(x) + \iota_R(y))|x-y|_{\rd}^2,
	\end{align}
	and that
	$b\circ (G\circ\beta^\frac{1}{2})\inv \in \Lip_{loc}([0,\infty))$, where $G(r) := \int_0^r (\beta\inv (s^2))^{-\zeta}\ds$, for all $r\geq 0$.
	
	Let $\nu \in \PPPP_0(\rd)\cap L^\infty(\rd)\cap L^1_{\Phi}(\rd)$ for some $\Phi \in \mathcal{Q}$ and let $u$ denote the corresponding probability solution to \eqref{FPKE} with $\left.u\right|_{t=0}=\nu$ provided by Theorem \ref{theorem.FPKE.existence}.
Let $T\in (0,\infty)$. Suppose $(X,W), (Y,W)$ are two $P^{(u_t)}_\nu$-weak solutions to \eqref{MVSDE} up to time $T$ on a common stochastic basis $(\Omega,\FF,\PP;(\FF_t)_{t\in [0,T]})$ with respect to the same standard $d$-dimensional  $(\FF_t)$-Brownian motion $(W(t))_{t\in [0,T]}$ such that $X(0)=Y(0)$ $\PP$-a.s.
Then ${\sup_{t\in [0,T]} |X(t)-Y(t)| = 0\  \PP\text{-a.s.}}$
\end{theorem}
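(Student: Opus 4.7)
The plan is to adapt the pathwise-uniqueness scheme for SDEs with singular coefficients from the proof of \cite[Theorem~1.1]{roeckner2010weakuniqueness}, exploiting that by definition both $P^{(u_t)}_\nu$-weak solutions share the common time marginal density $u_s$, and combining this with the regularity $\beta^{1/2}(u)\in L^2((0,T);H^1(\rd))$ provided by Theorem~\ref{theorem.FPKE.regularity.l1linfty}. For $R,\rho>0$, localise with $\tau_R:=\inf\{t\in[0,T]:|X(t)|\vee|Y(t)|>R\}$ and apply It\^o's formula to $\log(\rho+|X(t\wedge\tau_R)-Y(t\wedge\tau_R)|^2)$. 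After taking expectation (the local martingale part vanishes by localisation), the problem reduces to estimating, by an $s$-integrable quantity that is uniform in $\rho$,
\begin{align*}
\EE\int_0^{t\wedge\tau_R}\frac{2\langle X-Y,\bm b^u(s,X)-\bm b^u(s,Y)\rangle_{\rd}+d\,|\sigma(u_s(X))-\sigma(u_s(Y))|^2}{\rho+|X-Y|^2}\ds,
\end{align*}
where $\sigma(r):=\sqrt{2\beta(r)/r}$. A Gronwall argument then yields $X\equiv Y$ on $[0,\tau_R]$, and $R\uparrow\infty$ concludes.

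For the drift I would split $\bm b^u(s,X)-\bm b^u(s,Y)=(E(X)-E(Y))b(u_s(X))+E(Y)(b(u_s(X))-b(u_s(Y)))$. The first summand is controlled pointwise by $\|b\|_{L^\infty}(\iota_R(X)+\iota_R(Y))|X-Y|^2$ via the local monotonicity \eqref{condition.E.monotonicity}; after dividing by $\rho+|X-Y|^2\geq |X-Y|^2$ and using $\law{X(s)}=\law{Y(s)}=u_s\dx$ with $u_s\in L^\infty$ and $\iota_R\in L^1(B_R(0))$, the resulting contribution is $s$-integrable. For the second summand of the drift and for the diffusion increment, the main device is the composition structure: set $H:=G\circ\beta^{1/2}$ and $F:=b\circ H\inv\in \Lip_{loc}([0,\infty))$ by hypothesis, so that $b\circ u_s=F\circ H\circ u_s$, while the elementary bound $(\sqrt a-\sqrt b)^2\leq |a-b|$ estimates the diffusion increment by a constant multiple of $|\beta(u_s(X))/u_s(X)-\beta(u_s(Y))/u_s(Y)|$, which, using \eqref{theorem.MVSDE.PU:ii} and $u_s\in L^\infty$, is again of the form $\widetilde F(H(u_s(X)))-\widetilde F(H(u_s(Y)))$ with $\widetilde F$ locally Lipschitz on the range of $H\circ u_s$.

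Both quantities are then estimated via the Hardy--Littlewood mean-value inequality
\[
|H(u_s(x))-H(u_s(y))|\leq C|x-y|\bigl(\M|\nabla H(u_s)|(x)+\M|\nabla H(u_s)|(y)\bigr).
\]
The crucial identity, motivated by \cite[(76)--(77)]{gess2023inventiones}, is the chain rule
\[
\nabla H(u_s)=u_s^{-\zeta}\nabla \beta^{1/2}(u_s),
\]
which expresses the otherwise singular composition in terms of the controlled gradient $\nabla\beta^{1/2}(u_s)$. Since $X(s),Y(s)$ have density $u_s\in L^\infty$, the $L^2$-continuity of $\M$ yields
\[
\EE\bigl[(\M|\nabla H(u_s)|(X(s)))^2+(\M|\nabla H(u_s)|(Y(s)))^2\bigr]\leq C\|u_s\|_{L^\infty}\|\nabla H(u_s)\|_{L^2(\rd)}^2,
\]
and analogous estimates apply to $\widetilde F\circ H\circ u_s$. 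Cauchy--Schwarz, combined with absorbing $|X-Y|^2/(\rho+|X-Y|^2)\leq 1$ from the numerator factor $|X-Y|$ coming out of the mean-value inequality, then provides the required uniform-in-$\rho$ bound.

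The main obstacle is to verify that $\nabla H(u_s)$ lies in $L^2([0,T]\times\rd)$: the factor $u_s^{-\zeta}$ is singular on $\{u_s=0\}$, and using the lower bound $\beta^{1/2}(r)\geq c_K r^{m/2}$ from \ref{condition.a.general} to rewrite $u_s^{-\zeta}\leq C(\beta^{1/2}(u_s))^{-2\zeta/m}$ on $\{u_s>0\}$ yields
\[
\|\nabla H(u_s)\|_{L^2([0,T]\times\rd)}^2\leq C\int_0^T\!\!\int_{\rd}(\beta^{1/2}(u_s))^{-4\zeta/m}|\nabla\beta^{1/2}(u_s)|^2\dx\ds.
\]
The borderline condition $\frac{2\zeta}{m}<1$ allows one to recognise the right-hand side, up to a multiplicative constant, as $\|\nabla[(\beta^{1/2}(u_s))^{1-2\zeta/m}]\|_{L^2}^2$ via the chain rule for a H\"older power of a bounded $H^1$-function, which is finite in view of \eqref{FPKE.estimate.regularity.beta12} and $u_s\in L^\infty$. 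Once this integrability is secured, the Gronwall inequality on $\EE[\log(\rho+|X(t\wedge\tau_R)-Y(t\wedge\tau_R)|^2)]-\log\rho$ holds with a bound independent of $\rho$, so sending $\rho\downarrow 0$ forces $X=Y$ on $[0,\tau_R]$, and $R\uparrow\infty$ completes the proof.
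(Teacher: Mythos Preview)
There are two genuine gaps.

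First, the diffusion estimate. After applying $(\sqrt a-\sqrt b)^2\le|a-b|$ you are left with a single difference $|\beta(u_s(X))/u_s(X)-\beta(u_s(Y))/u_s(Y)|$, which after one mean-value/maximal-function inequality produces only one factor $|Z(s)|$; the integrand is then $|Z|/(\rho+|Z|^2)$ times an $L^1$-quantity, and this ratio is not uniformly bounded in $\rho$ (its supremum is $\tfrac{1}{2\sqrt\rho}$), so the $\rho\downarrow 0$ step fails. The paper does not pass through $|a-b|$: it writes $(\beta(r)/(r+\eta))^{1/2}=f_\eta(\beta^{1/2}(r))$ and applies the mean-value theorem in the outer variable, obtaining $|\sigma(u_s(X))-\sigma(u_s(Y))|\le h_\eta(s,X)\,|\beta^{1/2}(u_s(X))-\beta^{1/2}(u_s(Y))|$; squaring \emph{before} the maximal-function inequality on $\beta^{1/2}(u_s)\in H^1$ then yields the required $|Z|^2$ in the numerator.

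Second, and more fundamentally, the claim $\nabla H(u_s)\in L^2([0,T]\times\rd)$ is not implied by the available regularity. Your argument reduces it to $(\beta^{1/2}(u_s))^{1-2\zeta/m}\in H^1$, but sub-linear H\"older powers of nonnegative $H^1\cap L^\infty$-functions need not be in $H^1$: already for $g(x)=|x|$ near the origin one has $g^\alpha\notin H^1_{loc}$ whenever $\alpha\le\tfrac12$, and $1-\tfrac{2\zeta}{m}\le\tfrac12$ occurs for admissible parameters (e.g.\ $m=3$, $\zeta=1$). The paper avoids this entirely by keeping the singular prefactor $h_\eta(s,x)$ (morally $\sim u_s(x)^{-1/2}$, and $\bar h_\eta\sim u_s^{-\zeta}$ for the $b$-increment) \emph{outside} the maximal function. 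After taking expectation, the density $u_s$ from the time marginal $\law{X(s)}=u_s\dx$ enters and the pointwise bounds $u_s\,h_\eta^2\le C$, $u_s\,\bar h_\eta\le C$ cancel the singularity, leaving only $\|\M|\nabla\beta^{1/2}(u_s)|\|_{L^2}^2\le C\|\nabla\beta^{1/2}(u_s)\|_{L^2}^2$, which is controlled by Theorem~\ref{theorem.FPKE.regularity.l1linfty}. This interplay between the mean-value prefactor and the marginal density---rather than pushing the singularity through the chain rule into $\nabla H(u_s)$---is the key device you are missing.
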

\begin{proof}
	Let $Z(t):=X(t)-Y(t), t\in [0,T]$, $\tau_R:=\inf \{ t\in [0,T] : |X(t)|\vee |Y(t)| \geq R\}\wedge T $  for $R>0$, and $\delta>0$ be arbitrary but fixed.
	As in the proof of \cite[Theorem 1.1]{roeckner2010weakuniqueness}, we apply It\^o's formula to $f_\delta(Z(t\wedge\tau_R)$, where $f_\delta$ is defined as (the $C^2$-function) $f_\delta(x):=\ln\left(\frac{|x|^2}{\delta^2}+1\right), x\in \rd,$ with $|\nabla f_\delta(x)| \leq \frac{C}{|x|+\delta}, |\Delta f_\delta(x)|\leq \frac{C}{|x|^2+\delta^2},$ for all $x\in \rd$ and some $C\in (0,\infty)$ independent of $x$.
	We obtain the following estimates, utilising Fatou's lemma.
	\begin{align}\label{theorem.MVSDE.PU:ineq}
		&\EE\ln\left(1+\frac{|Z(t\wedge\tau_R)|^2}{\delta^2}\right) \\
	&\leq C\EE\int_0^{t\wedge\tau_R}\frac{\langle E(X(s))b(u_s(X(s)))-E(Y(s))b(u_s(Y(s))),Z(s)\rangle_{\rd} }{|Z(s)|^2+\delta^2}\ds \notag\\
	&\ \ \ + C\EE\int_0^{t\wedge\tau_R}\frac{\left|\left(\frac{\beta(u_s(X(s))}{u_s(X(s))}\right)^{\frac{1}{2}}-\left(\frac{\beta(u_s(Y(s))}{u_s(Y(s))}\right)^{\frac{1}{2}}\right|^2}{|Z(s)|^2+\delta^2}\ds\notag\\
	&\leq C\EE\int_0^{t\wedge\tau_R}\frac{\langle E(X(s))b(u_s(X(s)))-E(Y(s))b(u_s(Y(s))),Z(s)\rangle_{\rd} }{|Z(s)|^2+\delta^2}\ds \notag\\
	&\ \ \ + C\liminf_{\eta\to 0} \EE\int_0^{t\wedge\tau_R}\frac{\left|\left(\frac{\beta(u_s(X(s))}{u_s(X(s))+\eta}\right)^{\frac{1}{2}}-\left(\frac{\beta(u_s(Y(s))}{u_s(Y(s))+\eta}\right)^{\frac{1}{2}}\right|^2}{|Z(s)|^2+\delta^2}\ds\notag\\
	&=: \text{I }+\liminf_{\eta \to 0}\text{ II$_\eta$}.
	\end{align}

	\noindent \textbf{Regarding II$_\eta$:}\\
	Let $\eta >0$ and define $f_\eta(r):=\frac{r}{(\beta\inv(r^2)+\eta)^\frac{1}{2}}, r \geq 0$. 
	Note that $f_\eta$ is continuously differentiable in a sufficiently small open neighbourhood of $[0,\infty)$ in $\RR$. A simple computation yields
	\begin{align}\label{theorem.MVSDE.PU:f.derivative} 
		f_\eta'(r)
		= \frac{1}{(\beta\inv(r^2)+\eta)^\frac{1}{2}}-\frac{r^2(\beta\inv)'(r^2)}{(\beta\inv(r^2)+\eta)^{\frac{3}{2}}} 
		\ \ \forall r\geq 0.
	\end{align}
	Let $\tilde{K} \in (0,\infty)$ and $K:=\beta\inv(\tilde{K})$. Then by  \eqref{theorem.MVSDE.PU:ii} and the inverse function theorem, one can find $C_1=C_1(\tilde{K}), C_2=C_2(\tilde{K})\in (0,\infty)$ such that
	\begin{align*}
		C_1 r^{\frac{1}{m}} \leq \beta\inv(r)\ \ \forall r\in [0,\tilde{K}] \text{ and } (\beta\inv)'(r) \leq C_2 r^{-\frac{m-1}{m}}\ \ \forall r\in (0,\tilde{K}].
	\end{align*}
	Via \eqref{theorem.MVSDE.PU:f.derivative}, we obtain for $C_3:= \frac{C_2}{C_1}+1$
	\begin{align*}
		|f_\eta'(r)| \leq C_3(C_1r^\frac{2}{m}+\eta)^{-\frac{1}{2}}\ \text{ $\forall r\in [0,\tilde{K}]$}.
	\end{align*}
	Therefore, choosing $\tilde{K}:=\beta^\frac{1}{2}(||u||_{L^\infty})\vee \beta(||u||_{L^\infty})$, we may estimate via the mean value theorem for all ${x,y \in \rd, s\in [0,T]}$
	\begin{align}\label{theorem.MVSDE.PU:trick1}
		&\left|\left(\frac{\beta(u_s(x))}{u_s(x)+\eta}\right)^{\frac{1}{2}}-\left(\frac{\beta(u_s(y))}{u_s(y)+\eta}\right)^{\frac{1}{2}}\right|
		=|f_\eta(\beta^\frac{1}{2}(u_s(x)))-f_\eta(\beta^\frac{1}{2}(u_s(y)))|\notag\\
		&\leq\int_0^1 \left|f_\eta'\left(\theta \beta^\frac{1}{2}(u_s(x))+(1-\theta)\beta^\frac{1}{2}(u_s(y))\right)\right|\mathrm d\theta \left|\beta^\frac{1}{2}(u_s(x))-\beta^\frac{1}{2}(u_s(y))\right|\notag\\
		&\leq h_\eta(s,x)\left|\beta^\frac{1}{2}(u_s(x))-\beta^\frac{1}{2}(u_s(y))\right|, 
	\end{align}
	where  $h_{\eta}(s,x) := C_3\int_0^1(C_4u_s(x)\theta^\frac{2}{m}+\eta)^{-\frac{1}{2}}\mathrm d\theta, s\in [0,T],x\in \rd$, and $C_4 := C_1(\frac{a}{m})^{\frac{1}{m}}$. Note that $||h_\eta||_\infty \leq \frac{C_3}{\sqrt{\eta}}$ and
	\begin{align}\label{theorem.MVSDE.PU:ineq.uhEta2}
		u_s(x) h_\eta^2(s,x) \leq C_3^2C_4^{-1}\left(\int_0^1 \theta^{-\frac{1}{m}}\mathrm d\theta\right)^2<\infty.
	\end{align}
	For $f\in L^1_{loc}(\rd)$, we set $f_\varepsilon(x):=(f\ast\varphi_\varepsilon)(x)$, for all $x\in \rd$, where $\varphi_\varepsilon(x)=\varepsilon^{-d}\varphi(\varepsilon\inv x)$, for some fixed $\varphi \in \PPPP_0(\rd)\cap C_c^\infty(\rd), \varepsilon\in (0,1)$.
	 Using this notation and \eqref{theorem.MVSDE.PU:trick1} we estimate
	 \begin{align}\label{theorem.MVSDE.PU:II}
	 	\text{II}_\eta&\leq C \EE \int_0^{t\wedge\tau_R} (h_\eta(s,X(s))\wedge h_\eta(s,Y(s)))^2\frac{\left|(\beta^\frac{1}{2}(u_s))_\varepsilon(X(s))-(\beta^\frac{1}{2}(u_s))_\varepsilon(Y(s))\right|^2}{|Z(s)|^2+\delta^2}\ds \\
	 	&+ C \EE  \int_0^{t\wedge\tau_R} (h_\eta(s,X(s))\wedge h_\eta(s,Y(s)))^2\frac{\left|(\beta^\frac{1}{2}(u_s))_\varepsilon(X(s))-\beta^\frac{1}{2}(u_s(X(s)))\right|^2}{|Z(s)|^2+\delta^2}\ds\notag\\
	 	&+ C \EE \int_0^{t\wedge\tau_R} (h_\eta(s,X(s))\wedge h_\eta(s,Y(s)))^2\frac{\left|(\beta^\frac{1}{2}(u_s ))_\varepsilon(Y(s))-\beta^\frac{1}{2}(u_s(Y(s)))\right|^2}{|Z(s)|^2+\delta^2}\ds\notag\\
	 	&=: \text{II}_1^{\varepsilon,\eta} + \text{II}_2^{\varepsilon,\eta} + \text{II}_3^{\varepsilon,\eta}.\notag
	 	\end{align}
	 	By Lemma \ref{appendix.lemma.lipschitztypeestimate}, we have for all $x,y \in \rd$
	 	\begin{align*}
	 		\left|(\beta^\frac{1}{2}(u_s))_\varepsilon(x)-(\beta^\frac{1}{2}(u_s))_\varepsilon(y)\right|
	 		\leq \left(\left(\M|\nabla \beta^\frac{1}{2}(u_s)|\right)_\varepsilon(x)+\left(\M|\nabla \beta^\frac{1}{2}(u_s)|\right)_\varepsilon(y)\right)|x-y| .
	 	\end{align*}
	 	Therefore, using Lemma \ref{appendix.lemma.boundednesslocalmaximalfunction} and \eqref{theorem.MVSDE.PU:ineq.uhEta2}, we estimate 
	 	\begin{align}\label{theorem.MVSDE.PU:II.1}
	 	\text{II}_1^{\varepsilon,\eta}
	 	&\leq  C \EE\int_0^{t\wedge\tau_R} (h_\eta(s,X(s))\wedge h_\eta(s,Y(s)))^2\frac{\left(\left(\M|\nabla \beta^\frac{1}{2}(u_s)|\right)_\varepsilon(X(s))+ \left(\M|\nabla \beta^\frac{1}{2}(u_s)|\right)_\varepsilon(Y(s))\right)^2 |Z(s)|^2}{|Z(s)|^2+\delta^2}\ds \notag\\
	 	&\leq   C \int_0^T \int_\rd \left(\left(\M|\nabla \beta^\frac{1}{2}(u_s)|\right)_\varepsilon(x)\right)^2 h_\eta^2(s,x)u_s(x)\dx\ds
	 	\leq C ||\nabla\beta^{\frac{1}{2}}(u)||_{L^2((0,T)\times\rd;\rd)}^2<\infty,
	 \end{align}
	 where the right hand-side is finite, according to Theorem \ref{theorem.FPKE.regularity.l1linfty}.
	 Furthermore, using \eqref{theorem.MVSDE.PU:ineq.uhEta2}, we obtain
	 \begin{align}\label{theorem.MVSDE.PU:II.2}
	 	\max\{\text{II}_2^{\varepsilon,\eta}, \text{II}_2^{\varepsilon,\eta} \} &\leq \frac{C}{\delta^2}\int_0^T\int_{B_R(0)} |(\beta(u_s))_\varepsilon(x)-\beta(u_s(x))| h_\eta(s,x)^2u_s(x)\dx\ds\notag\\
	 	&\leq \frac{C}{\delta^2}||(\beta(u))_\varepsilon-\beta(u)||_{L^1((0,T)\times B_R(0))}\to 0 , \text{ as } \varepsilon \to 0.
	 \end{align}

	 \noindent \textbf{Regarding I:} \\
	 Let us define $G_\eta(r):=\int_0^r ((\beta\inv)(s^2)+\eta)^{-\zeta}\ds$, for all $r\in [0,\infty)$. Then, $G_\eta(r)$ is continuously differentiable in a sufficiently small neighbourhood of $[0,\infty)$ and, by the monotone convergence theorem, $\lim_{\eta\to 0}G_\eta(r) = G(r)$ for all $r\in [0,\infty)$. Choosing $K$ as above, we obtain similar to \eqref{theorem.MVSDE.PU:trick1} that for all $x,y \in \rd$ 
	 \begin{align}\label{theorem.MVSDE.PU:trick2}
		|G_\eta(\beta^\frac{1}{2}(u_s(x)))-G_\eta(\beta^\frac{1}{2}(u_s(y)))|
		 \leq (\bar{h}_\eta(x)\wedge \bar{h}_\eta(y))|\beta^\frac{1}{2}(u_s(x))-\beta^\frac{1}{2}(u_s(y))|,
	 \end{align}
	 where $\bar{h}_\eta(s,x):=\int_0^1(C_4 u_s(x)\theta^\frac{2}{m}+\eta)^{-\zeta}\mathrm d\theta, s \in [0,T], x\in \rd$. 
	 Note that $$\bar{h}_\eta(s,x)u_s(x) \leq C_4^{-\zeta}||(\cdot)^{-\frac{2\zeta}{m}}||_{L^1((0,1))}||u||^{1-\zeta}_{L^\infty}< \infty,$$ since $\frac{2\zeta}{m}\in [0,1)$.
	 Since $b\circ (G \circ \beta^\frac{1}{2})\inv \in \Lip_{loc}([0,\infty))$, we obtain, using \eqref{theorem.MVSDE.PU:trick2}, that
	\begin{align}\label{theorem.MVSDE.PU:I}
	 	\text{I}&\leq C\EE\int_0^{t\wedge\tau_R}\frac{ ||E||_{L^\infty}|b(u_s(X(s)))-b(u_s(Y(s)))| + ||b||_{L^\infty}|E(X(s))-E(Y(s))| }{|Z(s)|+\delta}\ds\notag\\
	 	&\leq C ||E||_{L^\infty} ||b\circ(G\circ \beta^\frac{1}{2})\inv||_{\Lip((G\circ \beta^\frac{1}{2})(K))}\notag\\
	 	& \ \ \ \ \  \cdot\liminf_{\eta\to 0}\EE\int_0^{t\wedge\tau_R} (\bar{h}_\eta(s,X(s))\wedge \bar{h}_\eta(s,Y(s)))\frac{|\beta^\frac{1}{2}(u_s(X(s)))-\beta^\frac{1}{2}(u_s(Y(s)))|}{|Z(s)|+\delta}\ds\notag\\
	 	&\ \ \ + C ||b||_{L^\infty} \EE\int_0^{t\wedge\tau_R}\frac{|E(X(s))-E(Y(s))|}{|Z(s)|+\delta}\ds
	 \end{align}
	 Arguing in a similar fashion as in \eqref{theorem.MVSDE.PU:II}-\eqref{theorem.MVSDE.PU:II.2}, the first integral on the right hand-side of \eqref{theorem.MVSDE.PU:I} can be estimated as
	 \begin{align*}
	 	\EE\int_0^{t\wedge\tau_R} (\bar{h}_\eta(s,X(s))\wedge \bar{h}_\eta(s,Y(s)))\frac{|\beta^\frac{1}{2}(u_s(X(s)))-\beta^\frac{1}{2}(u_s(Y(s)))|}{|Z(s)|+\delta}\ds
	 	&\leq C ||\nabla\beta^\frac{1}{2}(u)||_{L^2((0,T)\times\rd;\rd)}<\infty.
	 \end{align*}
	 
	Using an approximation of $E$ by convolution, we may again perform a similar argument as in \eqref{theorem.MVSDE.PU:II}-\eqref{theorem.MVSDE.PU:II.2} for the second integral on the right hand-side of \eqref{theorem.MVSDE.PU:I}. Indeed; setting $\bar{E}_\varepsilon:=E\ast \varphi_\varepsilon$, where the convolution is meant component-wise, we have
	\begin{align}\label{theorem.MVSDE.PU:I.11}
		\EE\int_0^{t\wedge\tau_R}\frac{|E(X(s))-E(Y(s))|}{|Z(s)|+\delta}\ds
		&=\EE\int_0^{t\wedge\tau_R}\frac{|\bar{E}_\varepsilon(X(s))-\bar{E}_\varepsilon(Y(s))|}{|Z(s)|+\delta}\ds
		+\EE\int_0^{t\wedge\tau_R}\frac{|\bar{E}_\varepsilon(X(s))-E(X(s))|}{|Z(s)|+\delta}\ds\notag\\
		&\ \ \   +\EE\int_0^{t\wedge\tau_R}\frac{|\bar{E}_\varepsilon(Y(s))-E(Y(s))|}{|Z(s)|+\delta}\ds.
	\end{align}
	For the first summand on the right hand-side we estimate similarly to \eqref{theorem.MVSDE.PU:II.1}
	\begin{align}\label{theorem.MVSDE.PU:E}
		\EE&\int_0^{t\wedge\tau_R}\frac{|\bar{E}_\varepsilon(X(s))-\bar{E}_\varepsilon(Y(s))|}{|Z(s)|+\delta}\ds
		\leq \EE \int_0^{t\wedge\tau_R} \frac{(i_{R+1})_\varepsilon(X(s)) + (i_{R+1})_\varepsilon(Y(s))}{|Z(s)|+\delta}\ds \notag\\
		&\leq 2\int_0^T \int_{B_R(0)}(i_{R+1})_\varepsilon(x) u_s(x)\dx\ds
		\leq 2T||i_{R+1}||_{L^1(B_R(0))}||u||_{L^\infty}<\infty.
	\end{align}
	The last two summands on the right hand-side in \eqref{theorem.MVSDE.PU:I.11} vanish with a similar reasoning as in \eqref{theorem.MVSDE.PU:II.2}.
	
All in all, we obtain by monotone convergence that
\begin{align}\label{theorem.MVSDE.PU:bound}
	\EE\ln\left(1+\sup_{\delta>0}\frac{|Z(t\wedge\tau_R)|^2}{\delta^2}\right)=\sup_{\delta>0}\EE\ln\left(1+\frac{|Z(t\wedge\tau_R)|^2}{\delta^2}\right) < \infty.
\end{align}
Necessarily, we therefore have for all $t\in [0,T]$
\begin{align}\label{theorem.MVSDE.PU:equality}
	X(t\wedge\tau_R) = Y(t\wedge\tau_R) \ \ \PP\text{-a.s.}
\end{align}
Note that $\sup_{t\in [0,T]}|X(t)|+|Y(t)| <\infty$ $\PP$-a.s. Hence, $\tau_R \to T$ $\PP$-a.s., as $R\to \infty$.
Therefore, taking the limit $R\to \infty$ in \eqref{theorem.MVSDE.PU:equality} and recalling that $X,Y \in C([0,T];\rd)$, we obtain that $\PP$-a.s.
\begin{align*}
	X(t) = Y(t) \ \ \forall t\in [0,T].
\end{align*}
This finishes the proof.
\end{proof}

\section{Proof of Theorem \ref{theorem.mainresult.2}}\label{section.theorem.mainresult.2.proof}
In this section, we prove the second main result, Theorem \ref{theorem.mainresult.2}.
Here, the corresponding Fokker--Planck equation is the classical porous medium equation with start in a Dirac measure. More precisely, we consider \eqref{FPKE} with $\beta(r) = |r|^{m-1}r, r\in \RR,$ for some $m>1$, $E\equiv b\equiv 0$, and $\nu = \delta_{x_0}$,
 i.e. 
\begin{align}\label{FPKE.PME}
	\partial_t u-\Delta(|u|^{m-1}u) = 0, \left.u\right|_{t=0}=\delta_{x_0},\ \ t\in [0,\infty),\tag{PME}
\end{align}
where $x_0\in\rd$.
In this case, \eqref{MVSDE} has the particular form
\begin{align}\label{MVSDE.PME}
	dX(t) \notag
	=&\ \sqrt{2\left|\frac{d\law{X(t)}}{dx}(X(t))\right|^{m-1}}\mathbbm{1}_{d\times d}\ \mathrm dW(t), \ \ t\in [0,T],\notag \\
	X(0)=&\ x_0.\tag{MVSDE.PME}
\end{align}

This section is split into three subsections.
In Section \ref{subsection.theorem.mainresult.2.proof:PME}, we recall the Barenblatt solution to \eqref{FPKE.PME} and recall and prove regularity results for powers of this special solution.
In Sections \ref{subsection.theorem.mainresult.2.proof:ingredient1} and \ref{subsection.theorem.mainresult.2.proof:ingredient2}, we then show that the conditions (i) and (ii) of the restricted Yamada--Watanabe, see Theorem \ref{theorem.restrictedYamadaWatanabe}, are fulfilled, respectively.
While the results in Sections 4.1 and 4.2 are true in the multi-dimensional case, we need to restrict ourselves to the one-dimensional setting in Section 4.3; more details can be found in Remark \ref{theorem.MVSDE.PU.PME:remark}.
\begin{proof}[Proof of Theorem \ref{theorem.mainresult.2}]
	The assertion follows directly Theorem \ref{theorem.MVSDE.PME.existence} and Theorem \ref{theorem.MVSDE.PME.PU} via Theorem \ref{theorem.restrictedYamadaWatanabe}.
\end{proof}
\subsection{The classical probability solution to \eqref{FPKE.PME}}\label{subsection.theorem.mainresult.2.proof:PME}
It is well known that $u$, as defined by the following formula, is a probability solution to \eqref{FPKE.PME}. It is called the \textit{Barenblatt solution} and has the form
\begin{align}\label{FPKE.PME.BarenblattPattleSolution}\tag{BB}
	u(t,x):= t^{-\alpha}(C - k|(x-x_0)t^{-\beta}|^2)^{\frac{1}{m-1}}_+,\ \ (t,x)\in (0,\infty)\times  \rd,
\end{align}
where $m>1, \alpha:=\frac{d}{d(m-1)+2}, k :=\frac{\alpha(m-1)}{2md}, \beta:=\frac{\alpha}{d}$, and $C\in (0,\infty)$ is a normalising constant.
In this section, $\beta$ shall not be confused with the diffusivity function from \eqref{MVSDE}.
By the argument presented in \cite[Section 3.1.]{gess2020timespace} (and as repeated in the proof of Proposition \ref{lemma.FPKE.PME.solution.regularity} below), it is simply not true that $\nabla u^\frac{m}{2} \in L^2_{(loc)}((0,T)\times\rd)$.
Hence, we cannot perform the same proof as for Theorem \ref{theorem.MVSDE.PU} in order to show that pathwise uniqueness holds among all weak solutions to \eqref{MVSDE.PME} with one-dimensional time marginal law densities $u$. However, it is true that $\nabla^{s} u^{\frac{m}{2}} \in L^2((0,T)\times\rd)$ for all $s\in (0,1)$. This will turn out to be enough to prove such a pathwise uniqueness statement in the case $d=1$, see Section \ref{subsection.theorem.mainresult.2.proof:ingredient2}.

The following proposition provides, separately, a necessary and a sufficient condition for fractional Sobolev regularity in space and integrability in time of certain powers of the Barenblatt solution. 
Proposition \ref{lemma.FPKE.PME.solution.regularity} \ref{lemma.FPKE.PME.solution.regularity:i} is taken from \cite[Section 3.1.]{gess2020timespace}. The proof of Theorem \ref{lemma.FPKE.PME.solution.regularity} \ref{lemma.FPKE.PME.solution.regularity:ii} uses and recalls the proof of the latter.
\begin{proposition}\label{lemma.FPKE.PME.solution.regularity}
	Let $m>1$, $p \in (0,m]$ and let $x_0 \in \rd$. Let $u$ denote the Barenblatt solution to \eqref{FPKE.PME} with $\left.u\right|_{t=0}=\delta_{x_0}$, i.e. $u$ is given by \eqref{FPKE.PME.BarenblattPattleSolution}. 
	\begin{enumerate}[label=(\roman*)]
		\item \label{lemma.FPKE.PME.solution.regularity:i} (\cite[Section 3.1.]{gess2020timespace})
		Assume that for some $s\in (0,\infty)$
		\begin{align}\label{lemma.FPKE.PME.solution.regularity:regularity}
		u^{p} \in L^{\frac{m}{p}}((0,T);\dot{W}^{s,\frac{m}{p}}(\rd)).
	\end{align}
	Then, $s< \frac{2p}{m}$.
		\item \label{lemma.FPKE.PME.solution.regularity:ii}
		Assume that $m,p$ fulfill $m(2p-m+1)>p$ and let $s\in (0,1)$ such that $s<\frac{2p}{m}$. Then,
		\begin{align*}
			u^{p} \in L^{\frac{m}{p}}((0,T);{W}^{s,\frac{m}{p}}(\rd)).
		\end{align*}
	\end{enumerate}
\end{proposition}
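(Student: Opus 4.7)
The plan is to exploit the explicit self-similar structure of the Barenblatt profile. Setting $y = (x-x_0)t^{-\beta}$ and defining the time-independent profile
\[
v(y) := (C - k|y|^2)_+^{p/(m-1)}, \qquad y \in \rd,
\]
one has $u^p(t,x) = t^{-\alpha p}\, v((x-x_0)t^{-\beta})$. A direct change of variables in the Slobodeckij seminorm and in the $L^{m/p}$-norm, using the identity $\beta d = \alpha$, yields the scaling identities
\[
\|u^p(t,\cdot)\|_{\dot{W}^{s,m/p}(\rd)}^{m/p} = t^{-\alpha(m-1) - \beta s m/p}\,\|v\|_{\dot{W}^{s,m/p}(\rd)}^{m/p}, \quad \|u^p(t,\cdot)\|_{L^{m/p}(\rd)}^{m/p} = t^{-\alpha(m-1)}\,\|v\|_{L^{m/p}(\rd)}^{m/p}.
\]

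For (i), integrability on $(0,T)$ of the first identity is equivalent to $\alpha(m-1) + \beta s m/p < 1$. Inserting $\alpha = d/(d(m-1)+2)$ and $\beta = 1/(d(m-1)+2)$, this simplifies to $sm/p < 2$, i.e.\ $s < 2p/m$. Since $v$ is visibly not a.e.\ constant, $\|v\|_{\dot{W}^{s,m/p}(\rd)} > 0$ whenever finite; hence the assumption \eqref{lemma.FPKE.PME.solution.regularity:regularity} forces $s < 2p/m$.

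For (ii), once $s < 2p/m$ both time integrals are finite, and $\|v\|_{L^{m/p}(\rd)} < \infty$ is immediate since $v$ is bounded with compact support. The task reduces to proving $\|v\|_{\dot{W}^{s,m/p}(\rd)} < \infty$. The profile $v$ is smooth in the interior of its support $B_{\sqrt{C/k}}(0)$ and vanishes outside, so the only source of singular behavior is the boundary sphere, across which $v$ decays like $d(y,\partial B)^{p/(m-1)}$ in the normal direction. A smooth partition of unity and a local flattening of the boundary reduce the estimate to a product of a smooth tangential factor and the one-dimensional Slobodeckij integral
\[
\iint_{(0,1)^2} \frac{|x_+^\gamma - y_+^\gamma|^{m/p}}{|x-y|^{sm/p+1}}\, \dx\, \dy, \qquad \gamma := p/(m-1).
\]
A symmetric substitution $y = tx$ with $t \in (0,1)$ splits the integral into a $\dx$-integral, finite iff $s < \gamma + p/m$, and a $\mathrm dt$-integral, finite iff $s < 1$. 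By direct algebra the hypothesis $m(2p - m + 1) > p$ is equivalent to $\gamma + p/m = p/(m-1) + p/m > 1$; consequently, under this hypothesis the first constraint is automatically implied by $s < 1$.

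The hard part will be the last step: building a clean local chart around $\partial B_{\sqrt{C/k}}(0)$, reducing the multidimensional Slobodeckij integral of $v$ to the one-dimensional computation above, and controlling the contributions from pairs $(y,y')$ that are far apart via the compact support of $v$ and its smoothness in the interior. The scaling identity and the elementary one-dimensional integral are routine; the technical care needed lies in the boundary straightening and the tangential-normal decomposition of the double integral.
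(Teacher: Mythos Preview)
Your argument for part (i) is essentially the same as the paper's: both exploit the self-similar scaling to factor the space--time norm as a product of a time integral of a power of $t$ and the spatial seminorm of the profile, and observe that finiteness of the time factor is equivalent to $s < 2p/m$. The paper states the scaling for $s\in(0,1)$ and then extends to higher $s$ by differentiating the profile and inducting; your identity already covers all $s>0$ once one accounts for the $t^{-k\beta}$ factors coming from the $k$ derivatives in the Slobodeckij definition. This part is fine.

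For part (ii) your route is genuinely different. The paper does not estimate the Slobodeckij seminorm of the profile directly. Instead, it shows $\nabla F \in L^{m/p}(\rd)$ by an explicit radial computation (this is exactly where $m(2p-m+1)>p$ enters, as it is equivalent to the exponent $\frac{m(p-(m-1))}{p(m-1)}$ being $> -1$) and then invokes the Brezis--Mironescu interpolation inequality
\[
\|F\|_{\dot W^{s,m/p}} \le \|F\|_{L^{m/p}}^{1-s}\,\|\nabla F\|_{L^{m/p}}^{s}
\]
to conclude for every $s\in(0,1)$. This sidesteps all boundary-chart work: the only computation is a one-variable Beta-type integral for $\|\nabla F\|_{L^{m/p}}$. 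Your direct approach via partition of unity, boundary flattening, and a tangential--normal splitting of the double integral is correct in principle and has the merit of being self-contained (no interpolation lemma), but it is considerably more laborious; the reduction to a clean product of a tangential factor and the one-dimensional model integral requires care with cross terms (where both normal and tangential displacements vary) and with pairs straddling the boundary (your model domain should really be $(-1,1)^2$ with $(x)_+^\gamma$, not $(0,1)^2$, though the singular behaviour is the same). It is worth noting that your algebraic observation---that $m(2p-m+1)>p$ is equivalent to $\gamma + p/m > 1$---is exactly the condition the paper uses, just read through the interpolation lens: it makes $\nabla F$ integrable to the power $m/p$, which in your picture is what makes the one-dimensional $x$-integral converge for all $s<1$.
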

\begin{proof}
Let us recall the reasoning from \cite[Section 3.1.]{gess2020timespace}.  We set $F(x):=(C-k|x|^2)_+^\frac{p}{m-1}, x\in \rd$. Using the transformation rule for the Lebesgue integral, a direct computation reveals that for $s\in (0,1)$
	\begin{align}\label{lemma.FPKE.PME.solution.regularity:0}
		||u^p||_{L^{\frac{m}{p}}((0,T);\dot{W}^{s,\frac{m}{p}}(\rd))}^{\frac{m}{p}} = ||t^{-\alpha (m-1) - \beta\frac{sm}{p}}||_{L^1((0,T))}||F||_{\dot{W}^{s,\frac{m}{p}}(\rd)}^{\frac{m}{p}}.
	\end{align}
	The first factor on the right hand-side is finite as
	\begin{align*}
		-\alpha (m-1) - \beta\frac{sm}{p} > -1 \iff s < \frac{2p}{m}.
	\end{align*}
	A similar argument can be performed for $s \in (1,2)$ 
	since $\partial_{x_i}u^p(t,x)= t^{-\alpha m -\beta}\partial_{x_i}F(xt^{-\beta})$, for all $t>0, x\in \rd$ and $i \in \{1,...,d\}$.
	Hence, by induction, the proof of \ref{lemma.FPKE.PME.solution.regularity:i} is complete.
	
	For \ref{lemma.FPKE.PME.solution.regularity:ii}, let $m,p$ such that $m(2p-m+1)>p$ and let $0<s<\frac{2p}{m}\wedge 1$.
	Using the transformation rule for the Lebesgue integral, we have
	\begin{align*}
		||u^p||_{L^{\frac{m}{p}}((0,T)\times\rd)}^{\frac{m}{p}} = ||t^{-\alpha (m-1)}||_{L^1((0,T))}||F||_{L^{\frac{m}{p}}(\rd)}^{\frac{m}{p}}.
	\end{align*}
	Since $-\alpha (m-1)>-1$ and $F \in C_c(\rd)$, we obtain that $u^p \in L^{\frac{m}{p}}((0,T)\times \rd)$.
	By \eqref{lemma.FPKE.PME.solution.regularity:0}, it is sufficient to prove that $\nabla F \in L^{\frac{m}{p}}(\rd)$, as then, via the Brezis--Mironescu interpolation theorem (see \cite[Corollary 5.1]{brezis2018sobolev}),
	\begin{align*}
		||F||_{\dot{W}^{s,\frac{m}{p}}(\rd)} \leq ||F||_{L^\frac{m}{p}(\rd)}^{(1-s)}||\nabla F||_{L^{\frac{m}{p}}(\rd)}^{s}< \infty.
	\end{align*}
	So, let us show that $\nabla F \in L^{\frac{m}{p}}(\rd)$. 
		Clearly, $F$ is weakly differentiable with
	\begin{align*}
		\nabla F(x) = \frac{p}{m-1}(C-k|x|^2)^{\frac{p-(m-1)}{m-1}}(-2kx)\mathbbm{1}_{B_{\sqrt{\nicefrac{C}{k}}}(0)}(x), \text{ for a.e. } x\in \rd.
	\end{align*}
	Setting $\rho_1 := \frac{m(p-(m-1))}{p(m-1)}$, we have
	\begin{align*}
		||\nabla F||_{L^{\frac{m}{p}}}^{\frac{m}{p}} = \left(\frac{2kp}{m-1}\right)^\frac{m}{p}\int_{B_{\sqrt{\nicefrac{C}{k}}}(0)}(C-k|x|^2)^{\rho_1}|x|^\frac{m}{p}\dx.
	\end{align*}
		Integrating over shells, we obtain for $\rho_2:=\frac{m}{p}+(d-1)$
	\begin{align*}
		\int_{B_{\sqrt{\nicefrac{C}{k}}}(0)}(C-k|x|^2)^{\rho_1}|x|^\frac{m}{p}\dx
			= C_d\int_0^{\sqrt{\nicefrac{C}{k}}}(C-kr^2)^{\rho_1}r^{\rho_2}\dr,
	\end{align*}
	where $C_d \in (0,\infty)$ is some real constant depending only on the dimension $d$.
	Using the transformation rule for the Lebesgue integral, we obtain
	\begin{align*}
		\int_0^{\sqrt{\nicefrac{C}{k}}}(C-kr^2)^{\rho_1}r^{\rho_2}\dr
		= \frac{1}{2k^{\frac{\rho_2}{2}}}\int_0^C r^{\rho_1} \left(C-r\right)^\frac{\rho_2}{2}\dr.
	\end{align*}
The integral on the right hand-side is finite, since $\rho_1 >-1$ and $\rho_2 >-2$ by our assumptions.
	This completes the proof. 
\end{proof}
\subsection{Ingredient 1: A $P^{(u_t)}_{\delta_{x_0}}$-weak solution to \eqref{MVSDE.PME}}\label{subsection.theorem.mainresult.2.proof:ingredient1}

In \cite[Chapter 5]{barbu2020solutions}, Barbu and R\"ockner solved a huge class of nonlinear Fokker--Planck equations of the type \eqref{FPKE} with start in a bounded measure, and then, via the superposition principle procedure from \cite[Section 2]{barbu2019nonlinear}, solved the corresponding McKean--Vlasov SDE \eqref{MVSDE} based on the constructed solution to \eqref{FPKE}.
The following theorem is implied by \cite[Theorem 6.1 (b)]{barbu2020solutions}.
\begin{theorem}[$P^{(u_t)}_{\delta_{x_0}}$-weak solution]\label{theorem.MVSDE.PME.existence}
	Let $x_0 \in \rd$. Then, there exists a $P^{(u_t)}_{\delta_{x_0}}$-weak solution $(X,W)$ to \eqref{MVSDE}
	where $u$ is the Barenblatt solution to \eqref{FPKE.PME} with $\left.u\right|_{t=0}=\delta_{x_0}$.
\end{theorem}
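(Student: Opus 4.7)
The plan is to apply the superposition principle procedure of \cite{barbu2019nonlinear, barbu2020solutions}, which is itself based on Trevisan's superposition principle \cite{trevisan_super}. The Barenblatt solution $u$ given by \eqref{FPKE.PME.BarenblattPattleSolution} is a classical probability solution to \eqref{FPKE.PME} with $\left.u\right|_{t=0}=\delta_{x_0}$, and it can be read as a Schwartz-distributional solution of a \emph{linear} Fokker--Planck equation with time-inhomogeneous, space-dependent coefficients
\begin{align*}
    \tilde{b}(t,x) := 0,\qquad \tilde{\sigma}(t,x) := \sqrt{2 u(t,x)^{m-1}}\,\mathbbm{1}_{d\times d},\qquad (t,x)\in (0,T]\times\rd.
\end{align*}
The first task is to verify the integrability hypothesis of the superposition principle, namely
\begin{align*}
    \int_0^T\!\!\int_\rd \bigl(|\tilde{b}(t,x)| + |\tilde{\sigma}(t,x)|^2\bigr)\, u(t,x)\,\dx\,\dt = 2\int_0^T\!\!\int_\rd u(t,x)^m\,\dx\,\dt < \infty.
\end{align*}
Using the explicit formula \eqref{FPKE.PME.BarenblattPattleSolution}, I would bound $\|u(t,\cdot)\|_{L^\infty(\rd)} \le C t^{-\alpha}$ with $\alpha = \frac{d}{d(m-1)+2}$, together with $\int u(t,x)\dx = 1$, to obtain $\int u(t,x)^m\dx \le C t^{-\alpha(m-1)}$. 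Since $\alpha(m-1) = \frac{d(m-1)}{d(m-1)+2} < 1$, this is integrable on $(0,T)$.

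Next, I would invoke Trevisan's superposition principle to obtain a probability measure $Q$ on $C([0,T];\rd)$ concentrated on solutions of the martingale problem associated with the generator
\begin{align*}
    L_t \varphi(x) = u(t,x)^{m-1}\Delta\varphi(x),\qquad \varphi\in C_c^\infty(\rd),
\end{align*}
and with one-dimensional time marginals $Q\circ\pi_t^{-1} = u_t(x)\dx$ for $t\in (0,T]$ and $Q\circ\pi_0^{-1}=\delta_{x_0}$. The marginal at $t=0$ is automatic from the continuity statement for probability solutions recalled after \eqref{FPKE.test}. Because $\tilde{\sigma}$ is a scalar multiple of the identity, the usual equivalence between martingale problems and weak SDE solutions (enlarging the probability space if needed to support a $d$-dimensional Brownian motion $W$) then produces a probabilistically weak solution $(X,W,(\Omega,\mathcal{F},\mathbb{P};(\mathcal{F}_t)))$ of
\begin{align*}
    dX(t) = \sqrt{2 u(t,X(t))^{m-1}}\,\mathbbm{1}_{d\times d}\,dW(t),\qquad X(0)=x_0,
\end{align*}
with $\law{X(t)} = u_t(x)\dx$ for every $t\in (0,T]$.

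Finally, one has to recognize this as a $P^{(u_t)}_{\delta_{x_0}}$-weak solution of \eqref{MVSDE.PME}. By construction, for every $t\in (0,T]$ the density of $\law{X(t)}$ equals $u(t,\cdot)$ Lebesgue-a.e., and the precise Besicovitch-derivative representative discussed in the introduction makes $v_a(X(t)) = u(t,X(t))$ $\PP$-a.s. Hence $\sqrt{2\bigl(\tfrac{d\law{X(t)}}{\dx}(X(t))\bigr)^{m-1}} = \sqrt{2 u(t,X(t))^{m-1}}$ $d\PP\otimes dt$-a.e., so $(X,W)$ solves \eqref{MVSDE.PME} in the required nonlinear sense with the prescribed marginals. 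The main obstacle I expect is twofold: controlling the singular start from a Dirac measure (handled by the $t^{-\alpha(m-1)}$ integrability above and the narrow continuity of $t\mapsto u(t,\cdot)$ at $t=0$), and ensuring the identification of the density representative at $X(t)$ so that the nonlinear coefficient evaluates to $\sqrt{2u(t,X(t))^{m-1}}$ almost surely, for which the Besicovitch derivation theorem is the right tool.
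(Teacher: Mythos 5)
Your proposal is correct and takes essentially the same route as the paper, which simply invokes \cite[Theorem 6.1 (b)]{barbu2020solutions}; that result is itself obtained by the superposition-principle procedure of \cite{barbu2019nonlinear} based on Trevisan's theorem, which you reconstruct. Your explicit verification of the integrability hypothesis $\int_0^T\int u^m\,dx\,dt<\infty$ via $\alpha(m-1)<1$, and the Besicovitch identification of the density representative at $X(t)$, are exactly the points that make the cited black-box applicable here.
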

\subsection{Ingredient 2: $P^{(u_t)}_{\delta_{x_0}}$-pathwise uniqueness for \eqref{MVSDE.PME}}\label{subsection.theorem.mainresult.2.proof:ingredient2}
Let $u$ denote the Barenblatt solution, i.e. $u$ is given by \eqref{FPKE.PME.BarenblattPattleSolution}.
As emphasised already in Section \ref{subsection.theorem.mainresult.2.proof:PME}, we cannot use the same lines of proof as for Theorem \ref{theorem.MVSDE.PU} in order to prove $P^{(u_t)}_{\delta_{x_0}}$-pathwise uniqueness. The reason for this is that $u$ does not satisfy $\nabla u^\frac{m}{2} \in L^2_{loc}((0,T)\times\rd)$, but only $\nabla^s u^\frac{m}{2} \in L^2((0,T)\times\rd)$ for all $s\in (0,1)$. 

The proof of the following theorem is similar to the one of Theorem \ref{theorem.MVSDE.PU}, however, it additionally relies on the ideas of the proof of \cite[Theorem. 2.15 (i)]{champagnat2018} together with \cite[Lemma 3.5]{champagnat2018}; the latter is a variation of Lemma \ref{appendix.lemma.lipschitztypeestimate} for Sobolev functions of fractional order $\nicefrac{1}{2}$, see \eqref{theorem.MVSDE.PME.PU:estimate.Lipschitztype.fractional} below.
\begin{theorem}[$P^{(u_t)}_{\delta_{x_0}}$-pathwise uniqueness]\label{theorem.MVSDE.PME.PU}
Let $d=1$, $m > 1$, and $x_0\in \RR$.
Let $u$ denote the Barenblatt solution to the one-dimensional equation \eqref{FPKE.PME} with $\left.u\right|_{t=0}=\delta_{x_0}$ as in \eqref{FPKE.PME.BarenblattPattleSolution}.

Let $T\in (0,\infty)$. Suppose $(X,W), (Y,W)$ are two $P^{(u_t)}_{\delta_{x_0}}$-weak solutions to \eqref{MVSDE.PME} up to time $T$ on a common stochastic basis $(\Omega,\FF,\PP;(\FF_t)_{t\in [0,T]})$ with respect to the same standard $d$-dimensional  $(\FF_t)$-Brownian motion $(W(t))_{t\in [0,T]}$ such that $X(0)=Y(0)$ $\PP$-a.s.
Then, ${\sup_{t\in [0,T]} |X(t)-Y(t)| = 0\  \PP\text{-a.s.}}$
\end{theorem}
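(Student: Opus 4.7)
The plan is to mirror the proof of Theorem \ref{theorem.MVSDE.PU}, substituting the gradient bound $\nabla \beta^{1/2}(u) \in L^2((0,T) \times \rd)$ (which now fails for the Barenblatt solution) by the fractional substitute $u^{m/2} \in L^2((0, T); W^{1/2, 2}(\RR))$ from Proposition \ref{lemma.FPKE.PME.solution.regularity}\ref{lemma.FPKE.PME.solution.regularity:ii}, and replacing the maximal-function estimate Lemma \ref{appendix.lemma.lipschitztypeestimate} by its fractional one-dimensional analogue \cite[Lemma 3.5]{champagnat2018}: every $f \in W^{1/2, 2}(\RR)$ admits a representative (still denoted $f$) with
$$|f(x) - f(y)| \leq |x - y|^{1/2} \bigl(\Phi_f(x) + \Phi_f(y)\bigr) \quad \text{for a.e. } x, y \in \RR,$$
where $\Phi_f \in L^2(\RR)$ and $\|\Phi_f\|_{L^2(\RR)} \leq C\|f\|_{W^{1/2, 2}(\RR)}$.

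Set $Z(t) := X(t) - Y(t)$ and $\tau_R := \inf\{t \in [0, T] : |X(t)| \vee |Y(t)| \geq R\} \wedge T$. Since $E \equiv b \equiv 0$, the process $Z$ is a driftless continuous martingale. Applying It\^o's formula to $\phi_n(Z(t \wedge \tau_R))$ for $\phi_n \in C^2(\RR)$ to be specified below and taking expectations yields
$$\EE \phi_n(Z(t \wedge \tau_R)) = \tfrac{1}{2}\, \EE \int_0^{t \wedge \tau_R} \phi_n''(Z(s))\, \bigl|\sqrt{u_s^{m-1}(X(s))} - \sqrt{u_s^{m-1}(Y(s))}\bigr|^2\, ds.$$
I would then invoke, exactly as in the proof of Theorem \ref{theorem.MVSDE.PU}, the mean-value trick based on $\sqrt{u^{m-1}} = f_\eta(u^{m/2})$ with $f_\eta(r) = r(r^{2/m} + \eta)^{-1/2}$ (passing $\eta \to 0$), a convolution smoothing as in \eqref{theorem.MVSDE.PU:II}--\eqref{theorem.MVSDE.PU:II.2}, and the fractional estimate above applied to $u_s^{m/2}$, to dominate the integrand by
$$C\, \phi_n''(Z(s))\, |Z(s)|\, \bigl(h_\eta^2(s, X(s)) \wedge h_\eta^2(s, Y(s))\bigr)\, \bigl(\Phi_s(X(s)) + \Phi_s(Y(s))\bigr)^2,$$
where $u_s h_\eta^2 \leq C$ uniformly in $\eta \in (0,1)$ (cf.\ \eqref{theorem.MVSDE.PU:ineq.uhEta2}) and $s \mapsto \|\Phi_s\|_{L^2(\RR)}^2$ lies in $L^1([0, T])$.

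For $\phi_n$, I would adopt the classical Yamada--Watanabe construction associated with the modulus $\rho(r) = r^{1/2}$: pick $a_n \downarrow 0$ with $\log(a_{n-1}/a_n) = n/2$, choose a nonnegative $\psi_n \in C_c^\infty((a_n, a_{n-1}))$ with $\psi_n(r) \leq 2/(nr)$ and $\int_\RR \psi_n = 1$, and set $\phi_n(z) := \int_0^{|z|} \int_0^y \psi_n(r)\, dr\, dy$. Then $\phi_n \in C^2(\RR)$, $\phi_n \uparrow |\cdot|$ monotonically, and the crucial bound $\phi_n''(z)\, |z| = \psi_n(|z|)\, |z| \leq 2/n$ holds. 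Substituting this, using $(a + b)^2 \leq 2(a^2 + b^2)$ and the inequalities $h_\eta^2(s, X_s) \wedge h_\eta^2(s, Y_s) \leq h_\eta^2(s, X_s)$ and $\leq h_\eta^2(s, Y_s)$ on the two resulting pieces, then integrating against the marginals $\PP(X(s) \in dx) = \PP(Y(s) \in dx) = u_s(x)\, dx$ (where the bound $u_s h_\eta^2 \leq C$ absorbs $h_\eta^2$), I obtain
$$\EE \phi_n(Z(t \wedge \tau_R)) \leq \frac{C}{n} \int_0^T \|\Phi_s\|_{L^2(\RR)}^2\, ds \longrightarrow 0 \quad \text{as } n \to \infty.$$

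By monotone convergence $\phi_n \uparrow |\cdot|$, this yields $\EE |Z(t \wedge \tau_R)| = 0$, hence $Z(t \wedge \tau_R) = 0$ $\PP$-a.s., and sending $R \to \infty$ (using $\tau_R \to T$ $\PP$-a.s.\ by pathwise continuity of $X$ and $Y$) together with path continuity then gives $\sup_{t \in [0, T]} |X(t) - Y(t)| = 0$ $\PP$-a.s. I expect the main obstacle to be verifying that the fractional Lipschitz-type estimate of \cite[Lemma 3.5]{champagnat2018} combines cleanly with the mean-value trick and the convolution smoothing of Theorem \ref{theorem.MVSDE.PU}, with enough uniformity in $\eta$ to survive the passage $\eta \to 0$ before choosing $\phi_n$; this is also the precise point at which dimension one enters, in line with Remark \ref{theorem.MVSDE.PU.PME:remark}.
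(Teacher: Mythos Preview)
Your proposal is correct and uses the same key ingredients as the paper's proof—the fractional regularity $u^{m/2}\in L^2((0,T);W^{1/2,2}(\RR))$ from Proposition~\ref{lemma.FPKE.PME.solution.regularity}\ref{lemma.FPKE.PME.solution.regularity:ii}, the one-dimensional fractional Lipschitz-type estimate from \cite{champagnat2018}, the mean-value trick with $f_\eta(r)=r(r^{2/m}+\eta)^{-1/2}$ together with the uniform bound $u_s h_\eta^2\leq C$, and a convolution smoothing to pass from an a.e.\ pointwise estimate to one usable inside the expectation—but packages them through a different test function. The paper applies It\^o's formula to $f_\delta(r)=|r|\ln(|r|^2/\delta^2+1)$, whose second derivative satisfies $|f_\delta''(r)|\leq C(|r|+\delta)^{-1}$; after the fractional estimate produces a factor $|Z(s)|$ in the numerator, this yields a bound on $\EE[f_\delta(Z(t\wedge\tau_R))]$ that is \emph{uniform} in $\delta$, and taking $\sup_{\delta>0}$ forces $Z(t\wedge\tau_R)=0$. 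You instead run the classical Yamada--Watanabe sequence $\phi_n\uparrow|\cdot|$ with $\phi_n''(z)|z|\leq 2/n$, obtaining $\EE[\phi_n(Z(t\wedge\tau_R))]\leq C/n\to 0$ and concluding $\EE|Z(t\wedge\tau_R)|=0$ directly. Both arguments exploit the same cancellation—the $|x-y|^{1/2}$ in the fractional Lipschitz estimate, once squared, exactly matches the $|z|^{-1}$-type singularity of the second derivative of the test function—and the restriction to $d=1$ enters at the same point. Your route is arguably the more elementary of the two, being the textbook Yamada--Watanabe scheme for H\"older-$1/2$ diffusion coefficients; the paper's choice keeps a closer structural parallel with its own proof of Theorem~\ref{theorem.MVSDE.PU}.
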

\begin{proof}
Let  $Z(t):=X(t)-Y(t), t\in [0,T]$, and let $\tau_R:=\inf\{t \in [0,T] : |X(t)|\vee |Y(t)|\geq R \}\wedge T$, for $R>0$. 
Let $t\in [0,T]$. Fix an arbitrary $\delta >0$.
Let $f_\delta(r) := |r|\ln\left(\frac{|r|^2}{\delta^2}+1\right), r\in \RR$. Then $f_{\delta} \in C^2(\RR)$ with $|f'_\delta(r)| \leq C\ln\left(1+ \frac{|r|^2}{\delta^2}\right)$ and $|f''_\delta(r)| \leq \frac{C}{|r|+\delta}$ for all $r\in \RR$ and a constant $C\in (0,\infty)$ independent of $\delta$ and $r$.
By It\^o's formula and Fatou's lemma, we estimate
\begin{align}\label{theorem.MVSDE.PME.PU:estimate1}
	\EE &|Z(t\wedge\tau_R)|\ln\left(\frac{|Z(t\wedge\tau_R)|^2}{\delta^2}+1\right)\notag\\
	&\leq  C \EE \int_0^{t\wedge\tau_R} \frac{\left|u_s^\frac{m-1}{2}(X(s))-u_s^\frac{m-1}{2}(Y(s))\right|^2}{|Z(s)|+\delta}\ds
	\leq C \liminf_{\eta\to 0}\EE \int_0^{t\wedge\tau_R} \frac{\left|\sqrt{\frac{u_s^{m}(x)}{u_s(x)+\eta}}-\sqrt{\frac{u_s^{m}(y)}{u_s(y)+\eta}}\right|^2}{|Z(s)|+\delta}\ds
\end{align}
A similar argument as in the proof of Theorem \ref{theorem.MVSDE.PU} yields that for all $(s,x) \in [0,T]\times\RR$
\begin{align}\label{theorem.MVSDE.PME.PU:estimate2}
	\left|\sqrt{\frac{u_s^{m}(X(s))}{u_s(X(s))+\eta}}-\sqrt{\frac{u_s^{m}(Y(s))}{u_s(Y(s))+\eta}}\right| \leq (h_\eta(s,x)\wedge h_\eta(s,y)) \left|u_s^{\frac{m}{2}}(x)-u_s^{\frac{m}{2}}(y)\right|,
\end{align}
where we define $h_\eta(s,x) := (1+\frac{1}{m})\int_0^1 (\theta^\frac{2}{m} u_s(x)+\eta)^{-\frac{1}{2}}\mathrm d\theta$. Note that $h^2_\eta(s,x) u_s(x) \leq C$ for all $(s,x) \in [0,T]\times\RR$ for some constant $C\in (0,\infty)$ independent of $s,x,\eta$.
We recall that by \cite[Lemma 3.2]{champagnat2018} and Proposition \ref{lemma.FPKE.PME.solution.regularity}, for all $t>0$
\begin{align}\label{theorem.MVSDE.PME.PU:estimate.Lipschitztype.fractional}
	|u^\frac{m}{2}_t(x)-u^\frac{m}{2}_t(y)| \leq \left(\M|\partial^\frac{1}{2}_xu^\frac{m}{2}_t|(x) + \M|\partial^\frac{1}{2}_x u^\frac{m}{2}_t|(y)\right)|x-y|^\frac{1}{2}
\end{align}
for a.e. $x,y \in \RR$.
Using \eqref{theorem.MVSDE.PME.PU:estimate2} and \eqref{theorem.MVSDE.PME.PU:estimate.Lipschitztype.fractional} and that $u^m \in L^1((0,T)\times\rd)$, we estimate the right hand-side of \eqref{theorem.MVSDE.PME.PU:estimate1} via a convolution argument similar to \eqref{theorem.MVSDE.PU:II}-\eqref{theorem.MVSDE.PU:II.1} in the proof of Theorem \ref{theorem.MVSDE.PU} and obtain that
\begin{align*}
	\liminf_{\eta\to 0}\EE \int_0^{t\wedge\tau_R} \frac{\left|\sqrt{\frac{u_s^{m}(x)}{u_s(x)+\eta}}-\sqrt{\frac{u_s^{m}(y)}{u_s(y)+\eta}}\right|^2}{|Z(s)|+\delta}\ds
	 &\leq C ||\partial^{\frac{1}{2}}_x u||_{L^2((0,T)\times\RR)}< \infty.
\end{align*}
From the above, we may therefore conclude that
\begin{align*}
	\sup_{\delta>0}\EE|Z(t\wedge\tau_R)|\ln\left(\frac{|Z(t\wedge\tau_R)|^2}{\delta^2}+1\right) = \EE|Z(t\wedge\tau_R)|\ln\left(\sup_{\delta>0}\frac{|Z(t\wedge\tau_R)|^2}{\delta^2}+1\right)<\infty.
\end{align*}
Necessarily, we therefore have for all $t\in [0,T]$
\begin{align}\label{theorem.MVSDE.PME.PU:equality}
	X(t\wedge\tau_R) = Y(t\wedge\tau_R) \ \ \PP\text{-a.s.}
\end{align}
Note that $\sup_{t\in [0,T]}|X(t)|+|Y(t)|<\infty\  \PP$-a.s.
Hence, $\tau_R \to T \ \PP$-a.s., as $R\to \infty$. Therefore, taking the limit $R\to \infty$ in \eqref{theorem.MVSDE.PME.PU:equality} and recalling that $X,Y \in C([0,T];\RR)$, we obtain that $\PP$-a.s.
\begin{align*}
	X(t) = Y(t) \ \ \forall t\in [0,T].
\end{align*}
This finishes the proof.
\end{proof}
\begin{remark}\label{theorem.MVSDE.PU.PME:remark}
	Note that in the proof of Theorem \ref{theorem.MVSDE.PME.PU} the fundamental estimates on the first- and second-order derivatives of $f_\delta$ only work in the one-dimensional setting, see \cite[Proof of Theorem 2.15., p.1533]{champagnat2018}.
\end{remark}
\appendix
\addcontentsline{toc}{section}{Appendix}
\section*{Appendix}

\section{Lipschitz-type estimates}\label{section.appendix.maximalfunction}

We recall the definition and properties of (local) Hardy--Littlewood maximal functions on the set of Radon measures on $\rd$, where the latter are denoted as $M_{loc}(\rd;\rd)$ in the following. Furthermore, if $\mu \in M_{loc}(\rd;\rd)$, then $|\mu|$ denotes its variation measure.

The upcoming definition and the two lemmata presented afterwards are essentially taken from \cite{crippa2008estimates}.
\begin{definition}[{\cite[Definition A.1]{crippa2008estimates}}]\label{appendix.definition.localmaximalfunction}
	Let $\mu \in M_{loc}(\rd;\RR^{n})$ and $R\in (0,\infty]$. We define the (local) maximal function as
	\begin{align*}
		\M_{R}|\mu|(x) := \sup_{0<r<R}\frac{1}{|B_R(0)|}\int_{B_r(x)}|\mu|(\dx), x\in\rd.
	\end{align*}
	In the case $R=\infty$ we set $\M:=\M_\infty$.
	Furthermore, if $\mu$ is of the form $\mu(dx)=f(x)\dx$, where $f\in L^1_{loc}(\rd;\RR^n)$, then we write $\M_{R}|f|:=\M_{R}|\mu|$.
\end{definition}
\begin{lemma}[{\cite[Lemma A.3]{crippa2008estimates}}]\label{appendix.lemma.lipschitztypeestimate}
	Let $R\in (0,\infty]$ and $f\in W^{1,1}_{loc}(\rd;\RR^n)$. Then there exists a constant $C_d>0$ depending only on the dimension $d$, and a set $N \in \BBBB(\rd)$ with $\lambda^d(N)=0$ such that for all $x,y \in N^\complement$ with $|x-y|\leq R$
	\begin{align}\label{appendix.lemma.lipschitztypeestimate:1}
		|f(x)-f(y)| \leq C_d\left(\M_R|D f|(x) + \M_R|D f|(y)\right)|x-y|,
	\end{align}
	where $Df=(\partial_{x_j}f^i)_{1\leq i\leq d,1\leq j\leq n}$ denotes the matrix of all weak derivatives of $f$.
\end{lemma}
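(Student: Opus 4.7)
The plan is to prove the inequality by the classical pointwise Poincaré / dyadic telescoping argument. I take $N\in\BBBB(\rd)$ to be the $\lambda^d$-null set off of which $f$ is a Lebesgue point, i.e.\ $f(x)=\lim_{r\to 0}\fint_{B_r(x)}f\,\dy$ for every $x\in N^\complement$; its negligibility is a consequence of the Lebesgue differentiation theorem applied to each coordinate of $f\in W^{1,1}_{loc}(\rd;\RR^n)\subset L^1_{loc}(\rd;\RR^n)$. Throughout I write $f_B:=|B|\inv\int_B f\,\dy$ for the mean of $f$ over a ball $B$.

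The core ingredient is the following pointwise bound: for every $x\in N^\complement$ and every $r\in (0,R)$,
\begin{align*}
\bigl|f(x)-f_{B_r(x)}\bigr|\,\leq\, C_d\,r\,\M_R|Df|(x).
\end{align*}
To prove this, set $B_k:=B_{r/2^k}(x)$ for $k\in\LN\cup\{0\}$ and apply the $L^1$-Poincaré inequality on each $B_k$, namely $\fint_{B_k}|f-f_{B_k}|\,\dy \leq C_d\,(r/2^k)\fint_{B_k}|Df|\,\dy$, valid for $W^{1,1}_{loc}$ functions on balls. Since $B_{k+1}\subset B_k$ with $|B_k|/|B_{k+1}|=2^d$ and $r/2^k<R$,
\begin{align*}
|f_{B_k}-f_{B_{k+1}}|\,\leq\, 2^d\fint_{B_k}|f-f_{B_k}|\,\dy\,\leq\, 2^d C_d\,(r/2^k)\,\M_R|Df|(x).
\end{align*}
Summing this geometric series in $k$ and using $f_{B_k}\to f(x)$ as $k\to\infty$ yields the claim.

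For the conclusion, fix $x,y\in N^\complement$ with $r:=|x-y|\leq R$ and set $B:=B_{2r}(x)$, which contains both $x$ and $y$ and satisfies $B_r(y)\subset B$ (if needed, replace $R$ by $R/2$ at the cost of a dimensional factor so that all averaging balls that appear have radius at most $R$). Via the core bound applied at $x$ with radius $2r$, at $y$ with radius $r$, and comparing $f_{B_r(y)}$ to $f_B$ by a single additional Poincaré step,
\begin{align*}
|f(x)-f(y)|\,\leq\,|f(x)-f_B|+|f_B-f_{B_r(y)}|+|f_{B_r(y)}-f(y)|\,\leq\, C_d\,r\,\bigl(\M_R|Df|(x)+\M_R|Df|(y)\bigr),
\end{align*}
which is the desired estimate after renaming the constant. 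The only mild technical point is the $p=1$ exponent, which precludes approaches relying on the Hardy--Littlewood maximal inequality in $L^p$; however, the present argument only uses a pointwise bound and the $L^1$-Poincaré inequality on balls, both of which hold unconditionally, so no real obstacle arises.
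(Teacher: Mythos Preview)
The paper does not prove this lemma; it is simply quoted from \cite{crippa2008estimates} and used as a black box. Your argument is the classical one (Lebesgue points plus dyadic telescoping via the $L^1$-Poincar\'e inequality on balls) and is essentially what one finds in the cited reference or in standard texts, so there is nothing to compare on the level of strategy.

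One small gap worth tightening: in your concluding step you pass through the auxiliary ball $B=B_{2r}(x)$ with $r=|x-y|$, whose radius exceeds $R$ whenever $r>R/2$, so the corresponding average of $|Df|$ is not controlled by $\M_R|Df|(x)$. Your parenthetical fix (``replace $R$ by $R/2$'') only delivers the conclusion under the stronger hypothesis $|x-y|\leq R/2$, not the one stated. A clean remedy that keeps all radii at most $r\leq R$: compare $f_{B_r(x)}$ and $f_{B_r(y)}$ through their overlap $B_r(x)\cap B_r(y)$, which contains $B_{r/2}\bigl(\tfrac{x+y}{2}\bigr)$ and hence has volume at least $2^{-d}|B_r|$. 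One Poincar\'e step on each of $B_r(x)$ and $B_r(y)$ then gives
\begin{align*}
|f_{B_r(x)}-f_{B_r(y)}|\leq 2^d\Bigl(\fint_{B_r(x)}|f-f_{B_r(x)}|\,\dy+\fint_{B_r(y)}|f-f_{B_r(y)}|\,\dy\Bigr)\leq C_d\,r\,\bigl(\M_R|Df|(x)+\M_R|Df|(y)\bigr),
\end{align*}
and the rest of your argument goes through unchanged.
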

\begin{lemma}[{\cite[Lemma A.2]{crippa2008estimates}}]\label{appendix.lemma.boundednesslocalmaximalfunction}
	Let $\mu \in \mathcal{M}_{loc}(\rd;\RR^{m\times n})$ and $R\in (0,\infty]$. Then for a.e. $x\in \rd$, $M_R|\mu|(x)<\infty$.
	
	Let  $p\in (1,\infty)$. Then there exists a constant $C_{d,p}>0$ such that for all $f\in L^p_{loc}(\rd;\RR^n)$ and all $\rho >0$
	\begin{align*}
		\int_{B_\rho(0)} (\M_R |f|(x))^p\dx  \leq C_{d,p} \int_{B_{\rho+R}(0)}	|f(x)|^p\dx.
	\end{align*}
	For $p=1$, the previous statement does not hold. However, for $p=1$ one has the following weak estimate: There exists a constant $C_d>0$ such that for all $f\in L^1_{loc}(\rd;\RR^n)$
	\begin{align*}
		\lambda^d\left(\left\{x \in B_{\rho}(0) : \M_R|f|(x) >\alpha\right\}\right) \leq \frac{C_d}{\alpha}\int_{B_{\rho+R}(0)}|f(x)|\dx.
	\end{align*}

\end{lemma}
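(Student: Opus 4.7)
The plan is to deduce all three assertions from the classical (global) Hardy--Littlewood maximal inequalities on $\rd$---namely the strong $(p,p)$ bound for $p\in(1,\infty)$ and the weak-type $(1,1)$ bound---which I take as standard (they are the classical output of a Vitali covering argument applied to the super-level sets of $\M$). The sole localisation ingredient is the elementary spatial-confinement observation that whenever $x\in B_\rho(0)$ and $0<r<R$, the triangle inequality forces $B_r(x)\subset B_{\rho+R}(0)$. Setting $g:=|f|\mathbbm{1}_{B_{\rho+R}(0)}$ and $\nu:=|\mu|\mathbbm{1}_{B_{\rho+R}(0)}$, this immediately yields the pointwise dominations
\begin{equation*}
\M_R|f|(x)\leq \M g(x), \qquad \M_R|\mu|(x)\leq \M \nu(x), \qquad x\in B_\rho(0),
\end{equation*}
because the averages defining $\M_R$ are unaffected by the truncation outside $B_{\rho+R}(0)$.

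I would then settle the strong and weak bounds in parallel. Integrating the first domination over $B_\rho(0)$ and applying the global maximal inequality $\|\M g\|_{L^p(\rd)}\leq C_{d,p}\|g\|_{L^p(\rd)}$ to the truncation gives
\begin{equation*}
\int_{B_\rho(0)}(\M_R|f|)^p\dx\leq \int_\rd (\M g)^p\dx\leq C_{d,p}\int_{B_{\rho+R}(0)}|f(x)|^p\dx,
\end{equation*}
which is precisely the strong $(p,p)$ inequality claimed in the lemma. The weak $(1,1)$ bound for $p=1$ is obtained by exactly the same argument, with the strong estimate replaced by the global weak-type inequality $\lambda^d(\{\M g>\alpha\})\leq C_d\alpha^{-1}\|g\|_{L^1}$ applied to the truncation $g$.

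The a.e.\ finiteness of $\M_R|\mu|$ is then a direct corollary of the same scheme: since $\mu$ is a Radon measure, its variation $|\mu|$ satisfies $|\mu|(B_{\rho+R}(0))<\infty$ for every $\rho>0$, so the global weak $(1,1)$ inequality (which extends verbatim from functions to finite positive Borel measures) applied to $\nu$ yields
\begin{equation*}
\lambda^d\bigl(\{x\in B_\rho(0):\M_R|\mu|(x)>\alpha\}\bigr)\leq \frac{C_d}{\alpha}\,|\mu|(B_{\rho+R}(0)).
\end{equation*}
Letting $\alpha\to\infty$ shows $\M_R|\mu|<\infty$ a.e.\ on $B_\rho(0)$, and taking $\rho\to\infty$ completes the proof. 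The only non-trivial ingredient is the classical global maximal inequality itself, which I cite as a standard textbook result; once accepted, the localisation contributes nothing beyond the one-line truncation, so I anticipate no genuine obstacle.
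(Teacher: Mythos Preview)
Your argument is correct and is exactly the standard route: localise via the inclusion $B_r(x)\subset B_{\rho+R}(0)$ for $x\in B_\rho(0)$ and $r<R$, truncate, and then invoke the classical global Hardy--Littlewood maximal inequalities. The paper does not supply its own proof of this lemma; it simply cites \cite[Lemma~A.2]{crippa2008estimates}, where the same truncation argument is used.
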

\section{A concise recapitulation on nonlinear evolution equations in Banach spaces and the Crandall--Ligget theorem}\label{section.appendix.mildsolutionframework}
This section's content is borrowed from \cite[Chapter 4]{barbu2010NDE} and presented in a concise form. For the details, we refer to \cite{barbu2010NDE}.\\

\noindent\textbf{The problem.} Let $(X,||\cdot||)$ be a Banach space and let $A: D(A)\subset X \to X$ be a (possibly multi-valued) operator. Under which conditions on $\nu\in X$ may one find a (unique) solution to the evolution equation 
\begin{align}\label{appendix.crandallLigget.equation}\tag{EE}
	\partial_tu+A(u)=0,\ \ \left.u\right|_{t=0}=\nu,
\end{align}
and in what sense?
\\
\\
In the following $I$ will denote the identity map on $X$.
\begin{definition}[(m-)-accretivity]
	$(A,D(A))$ is called \textit{accretive}, if for all $i =1,2, \ (x_i,y_i) \in \mathrm{Graph}(A),$
	\begin{align}\label{accretivity}
		\norm{x_1-x_2} \leq \norm{x_1-x_2 + \lambda(y_1-y_2)}\ \ \forall \lambda>0.
	\end{align}
	If $(A,D(A))$ is accretive and $(I+\lambda A)(D(A)) = X$, then $(A,D(A))$ is called m-accretive.
\end{definition}
\begin{remark}
	Due to \eqref{accretivity}, the operator $(I+\lambda A)\inv$ is single-valued for all $\lambda >0$.
\end{remark}
From now on assume that $(A,D(A))$ is $m$-accretive.
The following definition provides a concept of solution to \eqref{appendix.crandallLigget.equation} based on a finite difference scheme.
\begin{definition}\label{appendix.crandallLigget.mildSolution}
	A continuous function $u:[0,\infty) \to X$ is called a mild solution to \eqref{appendix.crandallLigget.equation}, if for each $T>0$
	\begin{align*}
		u(t)=\lim_{h\to 0} u_h(t) \text{ in $X$, uniformly on $[0,T]$,}
	\end{align*}
	where $u_h:[0,T]\to X$ is a step function, defined by the finite difference scheme
	\begin{align*}
		u_h(t)=u_h^{i+1}\ \forall t\in (ih,(i+1)h], i=0,1,...,N-1, u_h(0)=\nu,\\
		\text{where } u_h^{i+1}+hA(u_h^{i+1}) = u_h^i,\ Nh=T,\\
		\text{and } u_h^0=\nu.
	\end{align*}
\end{definition}

The following theorem is a special case of the \textit{Crandall--Ligget theorem}.
\begin{theorem}[{cf. \cite[Theorem 4.3]{barbu2010NDE}}]\label{appendix.theorem.crandallLigget}
Let $(A,D(A))$ be m-accretive. Then, for all $\nu \in \overline{D(A)}$ (closure of $D(A)$ in $X$) there exits a unique mild solution to \eqref{appendix.crandallLigget.equation}.
\end{theorem}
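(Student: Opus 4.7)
The plan is to follow the classical Crandall--Ligget argument: construct the approximate step functions $u_h$ as iterates of the resolvent $J_\lambda:=(I+\lambda A)\inv$, and prove they form a Cauchy family in $C([0,T];X)$ as $h\to 0$ by means of the so-called Crandall--Ligget inequality.

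First I would record the basic properties of the resolvent. By $m$-accretivity, for every $\lambda>0$ the operator $J_\lambda:X\to D(A)$ is a well-defined, single-valued contraction, i.e.\ $\norm{J_\lambda(x)-J_\lambda(y)}\leq \norm{x-y}$ for all $x,y\in X$. An elementary manipulation of the defining relation yields the resolvent identity
\begin{align*}
    J_\lambda(x)=J_\mu\!\left(\tfrac{\mu}{\lambda}x+\bigl(1-\tfrac{\mu}{\lambda}\bigr)J_\lambda(x)\right),\qquad \lambda,\mu>0,\ x\in X.
\end{align*}
I would also note the trivial estimate $\norm{J_\lambda(x)-x}\leq \lambda\norm{A x_0}+(1+\text{const})\norm{x-x_0}$ for any $x_0\in D(A)$, which controls how far $J_\lambda$ moves a point in terms of $\lambda$ and the distance to $D(A)$.

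Now fix $T>0$, $\nu\in D(A)$, and define $u_h^0:=\nu$, $u_h^{i+1}:=J_h(u_h^i)$, so that $u_h(t)=u_h^{\lceil t/h\rceil}$ on $(0,T]$. The heart of the proof is to establish the Crandall--Ligget estimate
\begin{align*}
    \norm{u_h^n-u_k^m}\leq \norm{A\nu}\sqrt{(nh-mk)^2+nh(h-k)+mk(k-h)}\, \text{ (or a variant thereof),}
\end{align*}
valid for all $h,k>0$ and $n,m\in\LN_0$. I would prove this by a double induction on $(n,m)$: the resolvent identity lets one write $J_h(u_h^n)$ as a convex combination applied through $J_k$, and the contraction property of $J_k$ reduces the estimate at $(n+1,m+1)$ to a weighted combination of the estimates at $(n+1,m)$ and $(n,m+1)$; the base cases $n=0$ or $m=0$ follow from the one-step estimate and the bound $\norm{J_\lambda\nu-\nu}\leq \lambda\norm{A\nu}$. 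Specializing to $t=nh=mk$ (up to $O(h+k)$ corrections), this estimate gives $\sup_{t\in[0,T]}\norm{u_h(t)-u_k(t)}\to 0$ as $h,k\to 0$. Thus $(u_h)_{h>0}$ is Cauchy in $C([0,T];X)$, and its limit $u$ is the desired mild solution; uniqueness is immediate from Definition \ref{appendix.crandallLigget.mildSolution}.

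For $\nu\in\overline{D(A)}$, pick $\nu_j\in D(A)$ with $\nu_j\to\nu$. Since $J_h$ is a contraction, so is the map $\nu\mapsto u_h(t)$ for each fixed $h,t$, hence the approximants corresponding to $\nu_j$ are uniformly close (in $j$) to those corresponding to $\nu$. A standard diagonal/$3\varepsilon$-argument then transfers convergence from $\nu_j$ to $\nu$, producing the mild solution starting from $\nu$. The main obstacle is clearly the Crandall--Ligget double-induction estimate; once that is in place, everything else is contraction bookkeeping and an approximation argument.
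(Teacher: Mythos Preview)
The paper does not give its own proof of this statement: Theorem \ref{appendix.theorem.crandallLigget} is stated in the appendix as a citation to \cite[Theorem 4.3]{barbu2010NDE} and left unproved. Your sketch is the standard Crandall--Ligget argument (resolvent contractivity, the resolvent identity, the double-induction Cauchy estimate on the iterates, and a density argument to pass from $D(A)$ to $\overline{D(A)}$), which is precisely the route taken in the cited reference; so in substance your proposal agrees with what the paper defers to.
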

\textbf{Acknowledgements:}
I am very grateful to Prof. Dr. Michael R\"ockner, with whom I shared valuable discussions.
Moreover, I gratefully acknowledge the support by the German Research Foundation (DFG) through the CRC 1283.
\printbibliography
\end{document}